\newtheorem{theorem}{Theorem}[section]
\newtheorem{lemma}[theorem]{Lemma}
\theoremstyle{definition}
\newtheorem{corollary}[theorem]{Corollary}
\newtheorem{example}[theorem]{Example}
\newtheorem{proposition}[theorem]{Proposition}
\newtheorem*{theorem*}{Theorem}
\newtheorem*{proposition*}{Proposition}
\DeclareMathOperator{\im}{{\bf{}im}}
\DeclareMathOperator{\rwr}{{\bf{}wr}}
\DeclareMathOperator{\Cay}{{\bf{}Cay}}
\DeclareMathOperator{\Sch}{{\bf{}Sch}}
\DeclareMathOperator{\SSP}{{\bf{}SSP}}
\DeclareMathOperator{\AGP}{{\bf{}AGP}}
\DeclareMathOperator{\supp}{{\bf{}supp}}
\DeclareMathOperator{\code}{{code}}
\DeclareMathOperator{\ncl}{{\bf ncl}}
\def\Ga{{\Gamma}}
\def\ovc{{\overline{c}}}
\def\ovv{{\overline{v}}}
\def\ovu{{\overline{u}}}
\def\ovw{{\overline{w}}}
\def\ovx{{\overline{x}}}
\def\tO{{\tilde O}}
\def\P{{\mathbf{P}}}
\def\NP{{\mathbf{NP}}}
\def\WP{{\mathbf{WP}}}
\def\PP{{\mathbf{PP}}}
\def\CPR{{\mathbf{CP}}}
\def\CC{{\mathcal C}}
\def\CF{{\mathcal F}}
\def\CN{{\mathcal N}}
\newcommand{\gp}[1]{{\left\langle #1 \right\rangle}}
\newcommand{\gpr}[2]{{\left\langle #1 \mid #2 \right\rangle}}
\newcommand{\rb}[1]{{\left( #1 \right)}}
\def\MN{{\mathbb{N}}}
\def\MZ{{\mathbb{Z}}}
\newcommand{\Set}[2]{\left\{\, #1 \;\middle|\; #2 \,\right\}}
\newenvironment{customthm}[1]
  {\innercustomthm}
  {\endinnercustomthm}
\newenvironment{customco}[1]
  {\innercustomco}
  {\endinnercustomco}
\let\oldmarginpar\marginpar
\renewcommand\marginpar[1]{\-\oldmarginpar[\raggedleft\footnotesize #1]%
{\raggedright\footnotesize #1}}
\title{Magnus embedding and algorithmic properties of groups $F/N^{(d)}$}
\date{\today}
\author[F. Gul]{Funda Gul}
\author[M. Sohrabi]{Mahmood Sohrabi}
\author[A. Ushakov]{Alexander Ushakov}
\address{Department of Mathematics, Stevens Institute of Technology, Hoboken, NJ, USA}
\email{msohrabi,fgul,aushakov@stevens.edu}
\thanks{The third author has been partially supported by NSA Mathematical Sciences Program grant number H98230-14-1-0128}
\begin{document}

\begin{abstract}
In this paper we further study properties of Magnus embedding, give a precise
reducibility diagram for Dehn problems in groups of the form $F/N^{(d)}$,
and provide a detailed answer to Problem 12.98 in Kourovka notebook.
We also show that most of the reductions are polynomial time reductions
and can be used in practical computation.

\noindent
\textbf{Keywords.}
Magnus embedding, word problem, power problem, conjugacy problem, free solvable groups.

\noindent
\textbf{2010 Mathematics Subject Classification.} 20F19, 20F10, 20F65, 03D15.
\end{abstract}

\maketitle

\section{Introduction}

Let $F=F(X)$ be the free group on generators $X$, $N$ a normal subgroup of $F$,
$N'$ the derived subgroup of $N$,
and $N^{(d)}$ the $d$th derived subgroup of $N$.
The Magnus embedding is the main tool to study groups of the form $F/N'$.
It was introduced in \cite{Magnus:1939} by W.~Magnus
who showed that the elements of $F/N'$ can be encoded by $2\times 2$ matrices:
$$
M(X;N) =
\Set{\left(
\begin{array}{cc}
g & \pi\\
0 & 1
\end{array}
\right)
}{g\in F/N,~\pi\in \CF_\Gamma},
$$
where $\CF_\Gamma$ is a free module over the group ring $\MZ F/N$.
In this paper we study algorithmic properties of Magnus embedding and
decidability of the following problems for groups of the form $F/N^{(d)}$.

\medskip\noindent
{\bf $\WP(G)$, word problem in $G$.} Given a word $w$ in the generators of $G$ decide if $w=1$ in $G$, or not.

\medskip\noindent
{\bf $\CPR(G)$, conjugacy problem in $G$.} Given words $u,v$ in the generators of $G$ decide if $u\sim v$ in $G$, or not.

\medskip\noindent
{\bf $\PP(G)$, power problem in $G$.} Given words $u,v$ in the generators of $G$ decide if $v=u^k$ in $G$ for some $k\in\MZ$, or not.


\medskip
The Magnus embedding proved to be especially robust in the study of free solvable groups.
Indeed, free solvable group naturally appear in the context because
$$F/F^{(d)} = F/(F^{(d-1)})'$$
is the free solvable group of rank $n$ and degree $d$.
It immediately follows from the work of Magnus that decidability of
the word problem in $F/N$ implies decidability of the word problem in $F/N'$.
The conjugacy problem in groups of the type $F/N'$ was first approached by J.~Matthews in \cite{Matthews:1966}
who proved that:
\begin{itemize}
\item[(a)]
$u,v\in F/N'$ are conjugate (for free abelian $F/N$) if and only if their images under Magnus embedding are conjugate in $M(X;N)$;
\item[(b)]
conjugacy problem in $M(X;N)$ is decidable if and only if conjugacy problem in $F/N$ is decidable and
power problem in $F/N$ is decidable.
\end{itemize}
These two facts imply that free metabelian groups have decidable conjugacy problem.
Later Remeslennikov and Sokolov in \cite{Remeslennikov-Sokolov:1970} extended (a) to any torsion free group $F/N$, showed that the power problem is decidable
in free solvable groups, and deduced that free solvable groups have decidable conjugacy problem.
Finally, C.~Gupta in \cite{Gupta:1982} proved that (a) holds for groups with torsion as well
and hence decidability of the conjugacy and power problems
in $F/N$ implies decidability of the conjugacy problem in $F/N'$.
We use the following notation for reducibility of decision problems
in the sequel:
$$
\left\{
\begin{array}{l}
\CPR(F/N) \\
\PP(F/N)
\end{array}
\right.
\Rightarrow \CPR(F/N').
$$

In the light of these results, V. Shpilrain raised the following questions
in \cite[Problem 12.98]{Kourovka}.
Is it correct that:
\begin{itemize}
\item[(a)]
$\WP(F/N)$ is decidable if and only if $\WP(F/N')$ is decidable.
\item[(b)]
$\CPR(F/N)$ is decidable if and only if $\CPR(F/N')$ is decidable.
\item[(c)]
$\WP(F/N')$ is decidable if and only if $\CPR(F/N')$ is decidable.
\end{itemize}
It was shown by Anokhin in \cite{Anokhin:1997} that only 12.98(a) has an affirmative answer.
He constructed a group $F/N$ such that $\CPR(F/N)$ is decidable and $\CPR(F/N')$ is undecidable.
Such a group is clearly a counterexample to both 12.98(b) and 12.98(c).

We also would like to mention several results related to practical computations
in free solvable groups in which the Magnus embedding plays a crucial role.
S.~Vassileva showed in \cite{Vassilieva:2011} that the power problem in free solvable groups
can be solved in $O(rd (|u|+|v|)^6)$ time and used that result to
show that the Matthews-Remeslennikov-Sokolov
approach can be transformed into a polynomial time $O(rd(|u|+|v|)^8)$ algorithm for the conjugacy problem.
In \cite{Ushakov:2014} those complexity bounds were further improved and randomized algorithms were developed.
Another generalization was done by Lysenok and Ushakov in \cite{Lysenok-Ushakov:2014}.
It was shown that the Diophantine problem for spherical quadratic equations, i.e., equations of the form:
$$
z_1^{-1}c_1z_1 \ldots z_k^{-1}c_kz_k =1,
$$
in free metabelian groups is decidable.
Recall that for every $n\ge 2$ the Diophantine problem in free metabelian groups is undecidable (see \cite{Roman'kov_1979}).
Recently Vassileva proved in \cite{Vassilieva:2014} that the Magnus embedding is a quasi-isometry.

\subsection{Our contribution}

Here we shortly outline the main results of the paper
(somewhat simplifying the statements).
Let $F$ be a free group of rank at least $2$ and $N$ a recursively enumerable
normal subgroup of $F$.


\begin{customthm}{\ref{th:Magnus_PP}}
$\WP(F/N) \Rightarrow \PP(F/N')$.
\end{customthm}


\begin{customthm}{\ref{th:Magnus_CP}}
$\PP(F/N) \Rightarrow \CPR(F/N')$.
\end{customthm}

\begin{customthm}{\ref{th:CP_PP}}
$\PP(F/N) \Leftarrow \CPR(F/N')$.
\end{customthm}


\begin{customthm}{\ref{th:nonCP_CP}}
$\CPR(F/N) \not\Rightarrow \CPR(F/N')$.
\end{customthm}



Our results give the following reducibility diagram
for the decision problems in groups of the form $F/N^{(d)}$:
$$
\xymatrix{
\WP(F/N) \ar@{<->}[r]\ar@{<->}[rd] & \WP(F/N') \ar@{<->}[r]\ar@{<->}[rd]\ar@{<->}[d] & \WP(F/N'') \ar@{<->}[d]\ar@{<->}[r] & \ldots\\
\PP(F/N) \ar@{->}[u]\ar@{->}[r]\ar@{<->}[rd] & \PP(F/N') \ar@{<->}[r]\ar@{<->}[rd] & \PP(F/N'') \ar@{<->}[r]& \ldots\\
\CPR(F/N) \ar@/^2pc/[uu] & \CPR(F/N') \ar@{->}[u]\ar@{->}[r] & \CPR(F/N'') \ar@{<->}[u] \ar@{<->}[r] & \ldots\\
}
$$
In particular, the following theorem holds.

\begin{customco}{\ref{co:second_step}}
For every recursively enumerable $N\unlhd F$ the following holds.
\begin{itemize}
\item[(a)]
$\PP(F/N') \Leftrightarrow \PP(F/N'')$.
\item[(b)]
$\CPR(F/N'') \Leftrightarrow \CPR(F/N''')$.
\item[(c)]
$\WP(F/N'') \Leftrightarrow \PP(F/N'') \Leftrightarrow \CPR(F/N'')$.
\qed
\end{itemize}
\end{customco}

Furthermore, most of the reductions are polynomial time computable.
Denote by $\P$ the class of decision problems decidable in polynomial time.

\begin{theorem*}
Suppose that $\WP(F/N) \in\P$. Then the problems
$\WP(F/N^{(d)})$, $\PP(F/N^{(d)})$, and $\CPR(F/N^{(d)})$ are in $\P$ for every $d\ge2$.
Moreover, each of those problems has a unique polynomial bound that does not depend on $d$.
\qed
\end{theorem*}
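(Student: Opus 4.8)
The strategy is to reduce all three problems, for every $d$, to the single problem $\WP(F/N)$, by a reduction whose running time is bounded by one polynomial that does not involve $d$.

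\textbf{Step 1 (uniformity of the Magnus reductions).} First I would revisit the reductions that prove Theorems~\ref{th:Magnus_PP}, \ref{th:Magnus_CP} and \ref{th:CP_PP}. Each is polynomial time, and — because the Magnus embedding $F/M'\hookrightarrow M(X;M)$, the Fox–derivative computation, and the passage from a power or conjugacy instance to word–problem instances are all described purely in terms of $X$ and of the input words, never in terms of $M$ — the bounding polynomials and constants can be taken independent of $M$. Substituting $M:=N^{(i)}$ then turns Theorem~\ref{th:Magnus_PP} into a polynomial-time reduction $\WP(F/N^{(i)})\Rightarrow\PP(F/N^{(i+1)})$ and Theorem~\ref{th:Magnus_CP} into $\PP(F/N^{(i)})\Rightarrow\CPR(F/N^{(i+1)})$, all with one and the same polynomial. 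Consequently $\PP(F/N^{(d)})$ reduces to $\WP(F/N^{(d-1)})$, and $\CPR(F/N^{(d)})$ reduces to $\PP(F/N^{(d-1)})$ and hence to $\WP(F/N^{(d-2)})$, in each case with \emph{fixed} polynomial overhead; so it suffices to produce, uniformly in $d$, a polynomial-time algorithm for $\WP(F/N^{(d)})$ and then compose it with these $O(1)$ extra reductions.

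\textbf{Step 2 (bounded effective depth).} Freely reduce the input word $w$; if it becomes empty, output ``$w=1$''. Otherwise $w\neq1$ in $F$, and if $w=1$ in $F/N^{(d)}$ then $w\in N^{(d)}\subseteq F^{(d)}\subseteq\gamma_{2^{d}}(F)$. A nontrivial element of $\gamma_m(F)$ has $X$-length at least $m$: its image under the Magnus power–series embedding $x\mapsto 1+X$ is $1+f$ with $f\neq0$, and the lowest–degree term of $f$ has degree $\geq m$ (dimension subgroups) and $\leq|w|$. Hence if $2^{d}>|w|$ we may output ``$w\neq1$'' immediately. So from now on $d\leq\log_2 n$, where $n=|w|$.

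\textbf{Step 3 (one shared pass through all prefixes).} Run the iterated Magnus embedding on $w$, organized so that the $\leq n$ prefixes of $w$ are handled simultaneously at every level. At level $i$ (passing from $F/N^{(i+1)}$ down to $F/N^{(i)}$) one must decide $\bar w=1$ and the vanishing of the Fox derivatives $\partial w/\partial x\in\MZ[F/N^{(i)}]$; the latter amounts to partitioning the $\leq n$ prefixes of $w$ according to their images in $F/N^{(i)}$ and checking that the coefficients cancel. The technical heart is that this partition — equivalently, the folding of the path of $w$ in the Cayley graph of $F/N^{(i)}$ — can be computed incrementally in $\mathrm{poly}(n)$ time \emph{together with} the corresponding data at the lower levels, so that across all $\leq\log_2 n$ levels the algorithm issues only $\mathrm{poly}(n)$ equality queries in $F/N$, each on words of length $O(n)$, plus $\mathrm{poly}(n)$ bookkeeping. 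Since $\WP(F/N)\in\P$, the total running time is $\mathrm{poly}(n)$ with a polynomial independent of $d$; composing with Step~1 gives the same for $\PP(F/N^{(d)})$ and $\CPR(F/N^{(d)})$, which is the asserted uniform bound.

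I expect the main obstacle to be precisely Step~3: one must show that the folding of $w$ in the infinite, only recursively presented Cayley graph of $F/N^{(i)}$ really can be carried out for all prefixes at once with only polynomial overhead over the work already done at the lower levels, so that the per-level cost is \emph{additive} and the $O(\log_2 n)$ levels telescope into a fixed polynomial. This is where one must lean on the geometric (flow) description of the iterated Magnus embedding rather than on a black-box composition of Theorems~\ref{th:Magnus_PP}--\ref{th:CP_PP}: a naive composition would multiply the exponent by a constant at each of the $d$ levels and yield a bound whose degree grows with $d$.
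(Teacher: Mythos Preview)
Your proposal is correct and follows essentially the same route as the paper: the unnumbered theorem is a direct consequence of Theorems~\ref{th:WP_WP}, \ref{th:Magnus_PP}, and \ref{th:Magnus_CP}, each of which already carries the ``furthermore, for every $d$'' clause with a single polynomial bound, and the mechanism behind that uniformity is exactly your Steps~2--3 (bound the effective depth by $O(\log|w|)$, then refine the partition of prefixes---i.e., the support graph---level by level).

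The one substantive difference is your Step~2. You bound the effective depth via $N^{(d)}\subseteq F^{(d)}\subseteq\gamma_{2^{d}}(F)$ together with the dimension--subgroup fact that a nontrivial $w\in\gamma_m(F)$ has $|w|\ge m$. The paper instead proves the sharper and more self-contained bound $|w|\ge 3^{d}$ for $1\ne w\in N^{(d)}$ directly from the flow picture (Lemma~\ref{le:trivial_flow_length}: if $\pi_w=0$ on a graph with girth $m$ then $|w|\ge 3m$, iterated as in Proposition~\ref{pr:shortest identity}). Both yield a logarithmic depth and hence the same conclusion; the paper's argument has the advantage of staying entirely inside the flow machinery already in use, while yours imports a classical but separate fact about the lower central series.
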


Finally, in Section \ref{se:combin_problems} we consider two combinatorial
problems for groups $F/N'$: subset sum problem $\SSP(F/N')$
and acyclic graph problem $\AGP(F/N')$. The main results of that sections are:

\begin{customthm}{\ref{th:SSP_P}}
$\SSP(F/N') \in \P$ if and only if $\WP(F/N) \in \P$ and
either $N=\{1\}$ or $[F:N]<\infty$.
\qed
\end{customthm}

\begin{customthm}{\ref{th:SSP_NPcomp}}
If $\WP(F/N) \in\P$, $N\ne \{1\}$, and $[F:N]=\infty$, then $\SSP(F/N')$
and $\AGP(F/N')$ are $\NP$-complete.
\qed
\end{customthm}

\begin{customco}{\ref{co:AGP_P}}
$\AGP(F/N') \in \P$ if and only if $\WP(F/N) \in \P$ and
either $N=\{1\}$ or $[F:N]<\infty$.
\qed
\end{customco}

\section{Preliminaries: $X$-digraphs}
\label{se:X_digraphs}
Let $X$ be a set (called an \emph{alphabet}) and $F=F(X)$
the free group on $X$. By $X^-$ we denote the set of formal inverses
of elements in $X$ and put $X^\pm = X\cup X^-$.
An $X$-\emph{labeled directed graph} $\Gamma$ (or an
$X$-\emph{digraph}) is a pair of sets $(V,E)$ where
the set $V$ is called the {\em vertex set} and the set
$E \subseteq V \times V \times X^\pm$ is called the {\em edge set}.
An element $e = (v_1,v_2,x) \in E$ designates an edge with
the {\em origin} $v_1$ (also denoted by $\alpha(e)$), the {\em terminus} $v_2$
(also denoted by $\omega(e)$), labeled with $x$ (also denoted by $\mu(e)$).
We often use notation $v_1\stackrel{x}{\rightarrow} v_2$
to denote the edge $(v_1,v_2,x)$. A {\em path} in $\Gamma$
is a sequence of edges $p=e_1,\ldots,e_k$ satisfying
$\omega(e_i)=\alpha(e_{i+1})$ for every $i=1,\ldots,k-1$.
The {\em origin} $\alpha(p)$ of $p$ is the vertex $\alpha(e_1)$,
the {\em terminus} $\omega(p)$ is the vertex $\omega(e_k)$,
and the {\em label} $\mu(p)$ of $p$ is the word $\mu(e_1)\ldots \mu(e_k)$.
We say that an $X$-digraph $\Gamma$ is:
\begin{itemize}
    \item
{\em rooted} if it has a special vertex, called the root;
    \item
{\em folded} (or deterministic) if for every
$v \in V$ and $x\in X$ there exists at most one edge with the origin
$v$ labeled with $x$;
    \item
{\em $X$-complete} (or simply complete) if for every $v_1\in V$ and $x\in X^\pm$
there exists an edge $v_1\stackrel{x}{\rightarrow} v_2$;
    \item
{\em inverse} if with every edge
$e=g_1\stackrel{x}{\rightarrow} g_2$ the graph $\Gamma$ also contains the {\em inverse edge}
$g_2\stackrel{x^{-1}}{\rightarrow} g_1$, denoted by $e^{-1}$.
\end{itemize}
All $X$-digraphs in this paper are connected.
A {\em morphism} of two rooted $X$-digraphs is a graph morphism which maps
the root to the root and preserves labels. For more information on $X$-digraphs we refer to
\cite{Stallings:1983,Kapovich_Miasnikov:2002}.

\begin{example}
Let $F=F(X)$ and $H\le F$.
The \emph{Schreier graph} of the subgroup $H$, denoted by $\Sch(X;H)$,
is an $X$-digraph $(V,E)$, where $V$ is the set of right cosets
$$V = \{Hg\mid g\in F\}$$
and
$$
  E=\{ Hg\stackrel{x}{\rightarrow} Hgx \mid g\in F,\ x\in X^{\pm}\}.
$$
By definition, $\Sch(X;H)$ is a folded complete inverse $X$-digraph.
We always assume that $H$ is the root of $\Sch(X;H)$.
A special case of the Schreier graph is when $H\unlhd F$,
called a \emph{Cayley graph} of the group $F/H$ denoted by $\Cay(X;H)$.
\qed
\end{example}

Let $\Gamma=(V,E)$ be an inverse $X$-digraph. The set of edges $E$ can be split
into a disjoint union $E = E^+ \sqcup E^-$, where
$$
E^+ = \{e\in E\mid \mu(e)\in X\}
$$
is called the set of \emph{positive edges}, and
$$
E^- = \{e\in E\mid \mu(e)\in X^-\}.
$$
is called the set of \emph{negative edges}.
Clearly, $(E^+)^{-1} = E^-$ and $(E^-)^{-1} = E^+$.

The \emph{rank} $r(\Gamma)$ of an inverse $X$-digraph $\Gamma$ is defined as
$|E^+| - |T|,$ where $T$ is any spanning subtree of $\Gamma$.
The fundamental group $\pi_1(\Gamma)$ is the group of labels
of all cycles at the root; it is naturally a subgroup of
$F(X)$ of the rank $r(\Gamma)$, see \cite{Kapovich_Miasnikov:2002}.

\section{Preliminaries: Computational model and data representation}
\label{se:Computational_Model}

All computations are assumed to be performed on
a random access machine.
We use base $2$ positional number system in which presentations of integers
are converted into integers via the rule:
    $$(a_{k-1}\ldots a_3a_2a_1a_0)_2 = a_{k-1}2^{k-1}+\ldots +a_22^2+a_12+a_0,$$
where we assume that $a_{k-1}=1$. The number $k$ is called the {\em bit-length} of the presentation.

Let $G$ be a group generated by a finite set $X = \{x_1,\ldots,x_n\}$.
We formally encode the word problem for $G$ as a subset of $\{0,1\}^\ast$
as follows. We first encode elements of the set $X^\pm = \{x_1^\pm,\ldots,x_n^\pm\}$
by unique bit-strings of length $\lceil \log_2 n\rceil+1$.
The code for a word $w=w(X_r^\pm)$ is a concatenation of codes for letters and, formally:
$$
\WP(F/N) = \{\code(w) \mid w\in N\}.
$$
Thus, the bit-length of the representation for a word $w \in F$ is:
    $$|\code(w)| = |w| (\lceil \log_2 n\rceil+1).$$
We encode the power and conjugacy problems in a similar fashion.
For both of these problems instances are pairs of words and the encoding
can be done by introducing a new letter ``,'' into the alphabet $X^{\pm}$.

\subsection{Quasi-linear time complexity}

An algorithm is said to run in {\em quasi-linear time} if its time complexity function
is $O(n \log^k n)$ for some constant $k\in \MN$.
We use notation $\tO(n)$ to denote quasi-linear time complexity.
Quasi-linear time algorithms are also $o(n^{1+\varepsilon})$
for every $\varepsilon > 0$, and thus run faster than any polynomial
in $n$ with exponent strictly greater than $1$.
See \cite{Naik-Regan-Sivakumar:1995} for more information on quasi-linear time complexity theory.
Similarly, one can define quasi-quadratic $\tO(n^2)$, quasi-cubic $\tO(n^3)$ time complexity
as $O(n^2\log^k n)$, $O(n^3\log^k n)$, etc.

\section{Flows on inverse $X$-digraphs}
\label{se:flows}

Let $\Gamma = (V,E)$ be an inverse $X$-digraph. We say that a function $f: E \rightarrow \MZ$
is \emph{balanced} if:
\begin{itemize}
    \item[\bf(F1)]
$f(e) = - f(e^{-1})$ for any $e \in E$.
\end{itemize}
All functions in this paper are balanced.
A function $f: E\to\MZ$ defines the function $\CN_f:V\to\MZ$:
$$
\CN_f(v) = \sum_{\alpha(e) = v} f(e),
$$
called the \emph{net-flow} function of $f$.
We say that $f$ is a \emph{flow} if it satisfies the conditions (F1), (F2), and (F3).
\begin{itemize}
    \item[\bf(F2)]
$f$ has a finite support $\supp(f) = \{e \in E \mid f(e) \neq 0\}$.
    \item[\bf(F3)]
Either $\CN_f(v) = 0$ for every $v \in V$ in which case we say that $f$ is a \emph{circulation},
or there exist $s,t\in V$ such that $\CN_f(v) = 0$ for all $v \in V\setminus\{s,t\}$,
and $\CN_f(s)  = 1$ and $\CN_f(t) = -1$ and we say that $f$ is a flow from the \emph{source} $s$ to the \emph{sink} $t$.
\end{itemize}
Define $\CF_\Gamma$ to be the set
of all balanced integral functions on $E$ with finite support:
$$
\CF_\Gamma = \{ f: E \rightarrow \MZ \}.
$$
For $f,g\in\CF_\Gamma$ define $f+g\in\CF_\Gamma$ as follows:
$$
(f+g)(e) = f(e)+g(e).
$$
Clearly, $(\CF_\Gamma,+)$ is an abelian group.
The function $\|\cdot\|:\CF_\Gamma\to\MZ$ defined by:
$$
\|\pi\| = \sum_{e\in E^+} |\pi(e)|
$$
is called a \emph{norm} on $\CF_\Gamma$.
It is easy to see that every $X$-digraph morphism
$\varphi:\Gamma\to \Delta$ induces a homomorphism of abelian groups $\rho_\varphi:\CF_\Gamma\to\CF_\Delta$ defined as follows:
$$
\rho_\varphi(f)(e') = \sum_{\varphi(e)=e'} f(e),
$$
for $f\in\CF_\Gamma$ and $e'\in E(\Delta)$. Clearly, $\|\pi\| \ge \|\rho_\varphi(\pi)\|$
for every $\pi\in\CF_\Ga$.

\subsection{Flows defined by words}

Let $\Gamma = (V,E)$ be an rooted folded complete inverse $X$-digraph and
$w=x_{i_1}^{\varepsilon_1} \ldots x_{i_k}^{\varepsilon_k}\in F(X)$.
The word $w$ defines a unique path $p_w$ in $\Gamma$:
$$
v_0 \stackrel{x_{i_1}^{\varepsilon_1}}{\to} v_1 \stackrel{x_{i_2}^{\varepsilon_2}}{\to} v_2
\stackrel{x_{i_3}^{\varepsilon_3}}{\to} \ldots
\stackrel{x_{i_k}^{\varepsilon_k}}{\to} v_k
$$
where $v_0$ is the root of $\Gamma$,
and a function $\pi_w^\Ga:E\to \MZ$ which associates to an edge $e$
the number of times $e$ is traversed minus the number of times $e^{-1}$
is traversed by $p_w$. It is easy to check that $\pi_w^\Ga$ is a flow in $\Gamma$.
We call $\pi_w^\Ga$ the \emph{flow} of $w$ in $\Gamma$.

\begin{figure}[t]
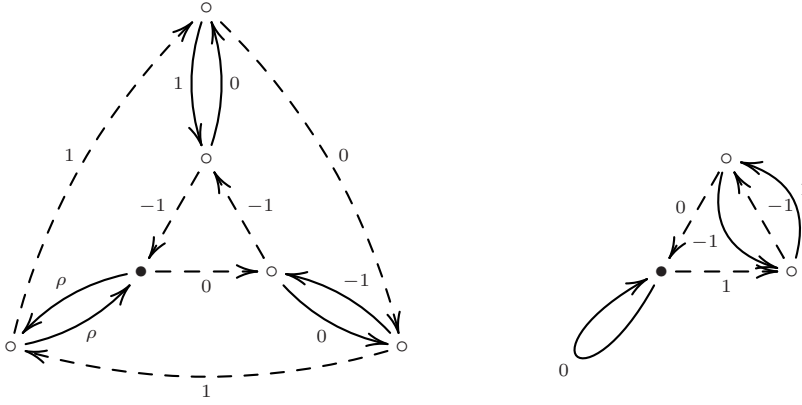

\centerline{
\xygraph{
!{<0cm,0cm>;<0cm,1cm>:<-1cm,0cm>::}
!{(0,0);a(0)**{}?(1.0)}*+{\circ}="a"
!{(0,0);a(120)**{}?(1.0)}*+{\bullet}="b"
!{(0,0);a(240)**{}?(1.0)}*+{\circ}="c"
!{(0,0);a(0)**{}?(3)}*+{\circ}="d"
!{(0,0);a(120)**{}?(3)}*+{\circ}="e"
!{(0,0);a(240)**{}?(3)}*+{\circ}="f"
"a":@[black]@[|(2)]@{-->}"b"_{-1}
"b":@[black]@[|(2)]@{-->}"c"_{0}
"c":@[black]@[|(2)]@{-->}"a"_{-1}
"f":@[black]@[|(2)]@/^0.4cm/@{-->}"e"^{1}
"d":@[black]@[|(2)]@/^0.4cm/@{-->}"f"^{0}
"e":@[black]@[|(2)]@/^0.4cm/@{-->}"d"^{1}
"a":@[black]@[|(2)]@/_0.2cm/"d"_{0}
"d":@[black]@[|(2)]@/_0.2cm/"a"_{1}
"c":@[black]@[|(2)]@/_0.2cm/"f"_{0}
"f":@[black]@[|(2)]@/_0.2cm/"c"_{-1}
"e":@[black]@[|(2)]@/_0.2cm/"b"_{\rho}
"b":@[black]@[|(2)]@/_0.2cm/"e"_{\rho}
}
\ \ \ \ \ \ \ \ \ \ \ \ \ \ \
\xygraph{
!{<0cm,0cm>;<0cm,1cm>:<-1cm,0cm>::}
!{(0,0);a(0)**{}?(1.0)}*+{\circ}="a"
!{(0,0);a(120)**{}?(1.0)}*+{\bullet}="b"
!{(0,0);a(240)**{}?(1.0)}*+{\circ}="c"
"a":@[black]@[|(2)]@{-->}"b"_{0}
"b":@[black]@[|(2)]@{-->}"c"_{1}
"c":@[black]@[|(2)]@{-->}"a"_{-1}
"b" :@`{"b"+(-2,1),"b"+(-1,2)}@[black]@[|(2)]"b"^{0}
"c":@[black]@[|(2)]@/_0.5cm/"a"_{1}
"a":@[black]@[|(2)]@/_0.5cm/"c"_{-1}
}
}
\caption{\label{fi:Cayley_S3} The Cayley graph of $S_3 = \gp{a,b}$ with $|a|=3$ and $|b|=2$
and the Schreier graph of $H=\gp{b}\le S_3$. The straight edges correspond to $b$ and dashed ones to $a$.
The values of $\pi_w$ for $w=[a^2,b]$ are shown on the edges.}
\end{figure}


\begin{lemma}[See {{\cite[Lemma 2.5]{Miasnikov_Romankov_Ushakov_Vershik:2010}}}]
For any flow $\pi:E(\Gamma)\to\MZ$ there exists $w\in F(X)$ satisfying $\pi=\pi_w^\Ga$.
\qed
\end{lemma}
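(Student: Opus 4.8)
The plan is to prove, by induction on the norm $\|\pi\| = \sum_{e\in E^+}|\pi(e)|$ — a well-defined nonnegative integer by (F2) — the following mildly strengthened statement: for every flow $\pi$ on $\Gamma$ and every vertex $v$ which is the source of $\pi$ when $\pi$ is not a circulation (and is arbitrary when it is), there is a word $w\in F(X)$ such that the flow of $w$, computed from the path of $w$ started at $v$ rather than at the root, equals $\pi$. The Lemma is then the case $v = v_0$: it is immediate when $\pi$ is a circulation, and, for a source–sink flow, it is applied in the situation where the source already is the root — the only situation arising in the applications, since flows produced by words read from the root are exactly the circulations together with the flows whose source is the root. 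The single inductive move is to peel off one edge carrying positive flow.

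If $\|\pi\| = 0$ then $\pi\equiv 0$ by (F1) and the empty word works. Let $\|\pi\| > 0$, and first suppose $\pi$ is a flow from $s$ to $t$ with $v = s$. From $\CN_\pi(s) = \sum_{\alpha(e) = s}\pi(e) = 1$ (a sum with finitely many nonzero terms) we get an edge $e_0$ with $\alpha(e_0) = s$ and $\pi(e_0)\ge 1$. Let $\pi' = \pi - \delta_{e_0}$, where $\delta_{e_0}$ is the elementary flow with value $+1$ on $e_0$, $-1$ on $e_0^{-1}$, and $0$ elsewhere. Then $\|\pi'\| = \|\pi\| - 1$ (exactly one positive edge drops in absolute value), $\pi'$ is balanced with finite support, and its net-flow function makes it a circulation if $\omega(e_0) = t$, a flow from $s$ to $t$ if $e_0$ is a loop at $s$, and a flow from $\omega(e_0)$ to $t$ otherwise. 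In each case the inductive hypothesis, applied with starting vertex $\omega(e_0)$ (or $s$ if $e_0$ is a loop), yields $w_1$ realizing $\pi'$ from $\omega(e_0)$, and $w := \mu(e_0)\,w_1$ read from $s$ traces $e_0$ and then the path of $w_1$, so its flow is $\delta_{e_0} + \pi' = \pi$. (Here we use that $\Gamma$ is folded, so reading the label of a path retraces that path, and that the traversal-count function of a concatenation of composable paths is the sum of those of the parts.)

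Now suppose $\pi$ is a circulation with $\|\pi\| > 0$. Choose $e_0$ with $\pi(e_0)\ne 0$, replacing it by $e_0^{-1}$ if needed so that $\pi(e_0)\ge 1$, and set $a = \alpha(e_0)$, $b = \omega(e_0)$. Again $\pi' = \pi - \delta_{e_0}$ has $\|\pi'\| = \|\pi\| - 1$, and its net flow is $-1$ at $a$, $+1$ at $b$, and $0$ elsewhere, so $\pi'$ is a flow from $b$ to $a$ (a circulation if $a = b$). By induction there is $w_1$ realizing $\pi'$ from $b$; since $\pi'$ is a flow into $a$, the path of $w_1$ from $b$ ends at $a$. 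Pick any path $q$ from $v$ to $a$ ($\Gamma$ is connected) and put $w := \mu(q)\,\mu(e_0)\,w_1\,\mu(q)^{-1}$. Read from $v$, this goes $v\to a$ along $q$, along $e_0$ to $b$, along the path of $w_1$ back to $a$, and back to $v$ along $q$ reversed, so its flow is $\pi_q + \delta_{e_0} + \pi' - \pi_q = \pi$, where $\pi_q$ is the flow of $q$. This closes the induction.

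The argument is essentially bookkeeping; the one point that needs care is the setup, since peeling an edge moves the source, so the induction must carry a variable starting vertex instead of a fixed root, and the degenerate shapes — a loop $e_0$, the coincidence $a = b$, the trivial path $q$ — all have to be permitted. It is also worth verifying at the outset that the net-flow function of the path of a word read from $v$ is $+1$ at $v$ and $-1$ at the terminus ($0$ elsewhere, in particular $0$ at $v$ for a loop): this is what singles out ``flow from $s$ to $t$'' as the right target notion and guarantees that each $w_1$ above terminates where claimed. A less elementary alternative fixes a spanning tree, realizes the fundamental circulations of $\Gamma$ by conjugated single-edge loops based at the root, uses that these freely generate the circulation group, and reduces a source–sink flow to a circulation by subtracting one $s$–$t$ path; I would nonetheless keep the inductive proof, as it is self-contained.
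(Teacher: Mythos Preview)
The paper does not supply its own proof of this lemma: it is stated with a \qed and a citation to \cite[Lemma 2.5]{Miasnikov_Romankov_Ushakov_Vershik:2010}, so there is nothing in the present paper to compare your argument against directly.

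Your inductive proof is correct. The bookkeeping in the peeling step is sound: subtracting the elementary flow $\delta_{e_0}$ drops $\|\pi\|$ by exactly one (regardless of whether $e_0\in E^+$ or $e_0\in E^-$, by (F1)), and your case split on the position of $\omega(e_0)$ correctly tracks how the source moves. The use of foldedness to ensure that reading $\mu(q)$, $\mu(e_0)$, $w_1$, $\mu(q)^{-1}$ retraces the intended edges is exactly what is needed, and completeness of $\Gamma$ guarantees every word can be traced. Your observation that the lemma, read literally for \emph{any} flow, overshoots --- since $\pi_w^\Gamma$ always has its source (if any) at the root $v_0$ --- is accurate; the statement is implicitly about circulations and root-sourced flows, which is precisely what your strengthened induction delivers at $v=v_0$.

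The spanning-tree alternative you sketch at the end is closer in spirit to how the paper handles the companion result, Lemma~\ref{le:pi_NN}, and is likely the argument in the cited source: it identifies the circulation module with the free abelian group on the edges outside a spanning tree and realizes each generator by a conjugated loop. That approach is shorter once the spanning-tree machinery is in place, but your edge-peeling induction is more self-contained and avoids any appeal to the structure of $\pi_1(\Gamma)$. Either is fine here.
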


In general, if $\Gamma$ is not complete, then some words can not be traced in $\Gamma$.
Suppose that a reduced nontrivial word $w$ can be traced in $\Gamma$.
The set of edges traversed by $w$ in
$\Gamma$ forms a connected $X$-digraph called the \emph{support graph}
of $w$ in $\Gamma$.

\begin{lemma}\label{le:trivial_flow_length}
Let $\Gamma$ be a rooted folded inverse $X$-digraph and $m$ the length
of a shortest cycle in $\Gamma$ (not necessarily at the root). Suppose that a reduced nontrivial
word $w$ can be traced in $\Gamma$ and $\pi_w^\Ga = 0$. Then $|w| \ge 3m$.
\end{lemma}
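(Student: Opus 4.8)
The plan is to analyze the support graph $\Delta$ of $w$ in $\Gamma$ and argue combinatorially about its structure. Since $w$ is reduced, nontrivial, and traceable, it traces a path $p_w$ from the root $v_0$ back to $v_0$ (it must be a closed path: if $\omega(p_w)\neq\alpha(p_w)$, then $\pi_w^\Ga$ would be a flow with a nonzero net-flow at the endpoints, contradicting $\pi_w^\Ga=0$, since a circulation is the only flow that is identically zero — actually more carefully, $\CN_{\pi_w^\Ga}$ is determined by the endpoints of $p_w$, so $\pi_w^\Ga=0$ forces $p_w$ to be closed). First I would observe that $\Delta$, being the set of edges traversed with $\pi_w^\Ga$ vanishing on all of them, cannot contain a vertex of "degree one" in the positive-edge sense: if some edge $e\in E^+(\Delta)$ is the unique positive edge at its endpoint $\omega(e)$ other than going back along $e$, then the path $p_w$ entering $\omega(e)$ along $e$ must leave along $e^{-1}$, which would mean $w$ is not reduced (a backtrack $x x^{-1}$), unless $\omega(e)$ has another incident edge. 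So every vertex of $\Delta$ has degree (counting $E^+$) at least $2$; equivalently $\Delta$ has minimum degree $\geq 2$ in the underlying undirected multigraph, hence $\Delta$ contains a cycle, and in fact $r(\Delta)\geq 1$.

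The heart of the argument is a counting/contraction step. The idea: since $\pi_w^\Ga=0$, every edge $e$ is traversed the same number of times in each direction, so the path $p_w$ traverses each undirected edge of $\Delta$ at least twice (once each way) — thus $|w|\geq 2|E^+(\Delta)|$. On the other hand I want a lower bound of the form $|E^+(\Delta)|\geq \tfrac{3}{2}m$, which combined with the above gives $|w|\geq 3m$. To get that, note $\Delta$ has minimum degree $\geq 2$. A graph with minimum degree $\geq 2$ and girth $\geq m$ (girth here meaning shortest cycle length, which is $\geq m$ since $m$ is the shortest cycle length in all of $\Gamma\supseteq\Delta$) must have many edges: take a shortest cycle $C$ in $\Delta$, of length $\ell\geq m$; it contributes $\ell$ edges. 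If $\Delta=C$ then $|E^+(\Delta)|=\ell\geq m$, which only gives $|w|\geq 2m$ — not enough. So I need to push harder using that $w$ is reduced at the root as well, or exploit that a single cycle traversed forward then backward is the trivial word, so $w$ cannot just run around $C$; there must be genuine branching, forcing $\Delta$ strictly larger than a single cycle and in fact forcing $|E^+(\Delta)|\ge \tfrac32 m$ or the path to re-use structure enough to reach length $3m$.

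The main obstacle I anticipate is exactly pinning down this last inequality: a naive "minimum degree $\geq 2$, girth $\geq m$" bound gives only $|w| \ge 2m$, so one must extract the extra factor $3/2$. The cleanest route is probably: since $p_w$ is a closed reduced path with trivial flow, consider the first moment it closes a cycle $C_1$ (length $\geq m$); after that it cannot simply retrace $C_1$ (reducedness), so it must leave $C_1$ along a new edge, travel out, and come back — producing a second cycle or a "handle", contributing at least $\lceil m/2\rceil$ further undirected edges that are also each traversed twice, for a total of at least $2(m + \lceil m/2\rceil) \geq 3m$ directed steps. Making "it must produce a handle of size $\geq m/2$" precise — tracking the path carefully and using that any cycle in $\Gamma$ has length $\geq m$ so any two distinct vertices joined by two internally disjoint paths in $\Delta$ have the sum of path-lengths $\geq m$ — is the technical crux, but it is a finite case analysis on how $p_w$ can return to $C_1$, and I expect it to go through cleanly.
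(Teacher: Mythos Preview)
Your outline is correct and matches the paper's approach: the path $p_w$ is closed, every edge of the support graph $\Delta$ is traversed at least twice (since $\pi_w^\Ga=0$ forces equal traversals in each direction), so $|w|\ge 2|E^+(\Delta)|$, and the goal reduces to $2|E^+(\Delta)|\ge 3m$, which in turn comes from showing $\Delta$ has rank at least $2$.

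The difference is in how you handle the final inequality. You propose a dynamic argument: follow $p_w$ until it first closes a cycle $C_1$, then argue a ``handle'' of length at least $m/2$ must appear. This can be made to work, but the case analysis is fiddly (the handle might attach at one point or two, it might share a long arc with $C_1$, etc.), and you correctly flag it as the technical crux without actually carrying it out. The paper's route is considerably cleaner and avoids path-tracking entirely: once you know $r(\Delta)\ge 2$, pass to a \emph{minimal} subgraph $\Delta'\subseteq\Delta$ of rank exactly $2$. Such a graph has exactly two topological shapes --- a theta-graph (two vertices joined by three arcs of lengths $a,b,c$) or a barbell (two disjoint loops of lengths $b,c$ joined by an arc of length $a$). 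In the theta case the three pairwise cycles give $a+b\ge m$, $a+c\ge m$, $b+c\ge m$, hence $2(a+b+c)\ge 3m$; in the barbell case $b\ge m$ and $c\ge m$ already give $2(a+b+c)\ge 4m$. Either way $2|E^+(\Delta)|\ge 2|E^+(\Delta')|=2(a+b+c)\ge 3m$.

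So: same skeleton, but replace your handle-chasing with the static ``minimal rank-$2$ subgraph has one of two shapes'' observation --- it turns the crux into a three-line case split.
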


\begin{proof}
It follows from our assumption $\pi_w = 0$ that the path $p_w$ is a cycle in $\Gamma$.
Let $\Delta$ be the support graph of $w$ in $\Gamma$.
The rank of $\Delta$ can not be $0$ ($w$ is not reduced in this case)
and can not be $1$ (either $w$ is not reduced or $\pi_w\ne 0$).
Therefore, the rank of $\Delta$ is at least $2$.
Each edge of $\Delta$ is traversed by $w$ at least twice.
Hence, it is sufficient to prove that $2|E(\Delta)| \ge 3m$.
Let $\Delta'$ be a minimal subgraph of $\Delta$ of rank exactly $2$.
There are exactly two distinct configurations possible for $\Delta'$,
shown in Figure \ref{fi:support_min}.

\begin{figure}[htbp]
\centerline{ \includegraphics[scale=1]{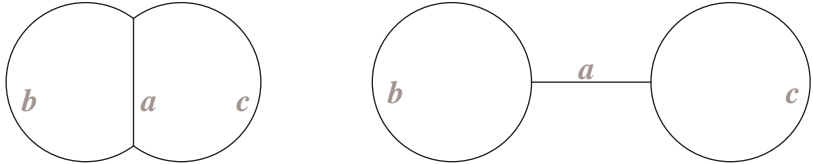} }
\caption{\label{fi:support_min} Two configurations for support graphs in Lemma \ref{le:trivial_flow_length}.}
\end{figure}

Let $a,b,c$ be the lengths of arcs as shown in the figure.
Since, the length of a shortest cycle in $\Gamma$ is $m$, we get the following bounds for our cases:
$$
\begin{array}{llll}
\left\{
\begin{array}{lll}
a+b \ge m, \\
a+c \ge m, \\
b+c \ge m, \\
\end{array}
\right.
&&&
\left\{
\begin{array}{lll}
b \ge m, \\
c \ge m. \\
\end{array}
\right.
\end{array}
$$
In both cases we have $2(a+b+c) \ge 3m$ which proves that $2|E(\Delta)| \ge 3m$.
Thus, $|w| \ge 3m$.
\end{proof}

\subsection{Flows on Schreier graphs}
In this section we study properties of flows on Schreier graphs.
The following lemma is the most important tool in the study of groups of the type
$F/N'$ and is the foundation of the Magnus embedding discussed in Section \ref{se:Magnus_emb}.
It is a well-known result and can be proven using algebraic topology techniques.
Here we provide a proof using elementary properties of Stallings' graphs.

\begin{lemma}\label{le:pi_NN}
Let $H\le F$, $\Delta=\Sch(X;H)$, and $w\in F$. Then $\pi_w^\Delta=0$ if and only if $w\in[H,H]$.
\end{lemma}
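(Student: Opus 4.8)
The plan is to prove both directions by interpreting the flow $\pi_w^\Delta$ in terms of the structure of the subgroup $H$ and its derived subgroup. The key conceptual point is that $\Delta = \Sch(X;H)$ is a folded complete inverse $X$-digraph whose fundamental group at the root $H$ is exactly $H$ itself (as a subgroup of $F(X)$), and that the net-flow / circulation structure on $\Delta$ computes the abelianization of $H$.

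For the implication $w \in [H,H] \Rightarrow \pi_w^\Delta = 0$: first observe that if $w \in H$, then the path $p_w$ in $\Delta$ is a closed path at the root (since $Hw = H$), so $\pi_w^\Delta$ is a circulation, i.e.\ $\CN_{\pi_w^\Delta}(v) = 0$ for all $v$. Next, the assignment $w \mapsto \pi_w^\Delta$ restricted to $H$ is a homomorphism from $H$ into the abelian group $(\CF_\Delta,+)$: this follows because tracing a concatenation $uw$ traces $u$ first and then $w$ from the endpoint of $p_u$, which is again the root since $u \in H$, so edge-traversal counts simply add. Since $\CF_\Delta$ is abelian, this homomorphism kills the commutator subgroup $[H,H]$, giving $\pi_w^\Delta = 0$.

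For the converse $\pi_w^\Delta = 0 \Rightarrow w \in [H,H]$: here I would use the identification of $\pi_1(\Delta)$ at the root with $H$ together with a spanning-tree / Schreier-generator argument. Pick a spanning tree $T$ of $\Delta$; each positive edge $e \notin T$ outside the tree corresponds to a Schreier generator $h_e \in H$ (the label of the cycle: tree-path to $\alpha(e)$, then $e$, then tree-path back). These $h_e$ freely generate $H$. The crucial computation is that the homomorphism $H \to \CF_\Delta$, $h \mapsto \pi_h^\Delta$, followed by the coordinate projection onto the non-tree edges, sends $h_e$ to the standard basis vector at $e$ (the generator $h_e$ traverses $e$ exactly once and no other non-tree edge). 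Hence this composite is precisely the abelianization map $H \to H^{\mathrm{ab}} \cong \bigoplus_{e \notin T} \MZ$. Therefore $\pi_w^\Delta = 0$ with $w \in H$ forces $w$ to lie in the kernel of abelianization, which is $[H,H]$. (One still needs to note that $\pi_w^\Delta = 0$ forces $w \in H$ in the first place: the net-flow condition $\CN_{\pi_w^\Delta} \equiv 0$ means $p_w$ is a closed path, so $Hw = H$.)

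The main obstacle is making the spanning-tree argument fully rigorous when $\Delta$ is infinite (as it is whenever $[F:H] = \infty$): one must be careful that $T$ is a spanning tree of a possibly infinite graph, that the Schreier generators $\{h_e\}$ still freely generate $H$ (this is the Nielsen--Schreier setup and is standard for Stallings graphs, cf.\ \cite{Kapovich_Miasnikov:2002}), and that the projection $\CF_\Delta \to \bigoplus_{e\notin T}\MZ$ is well-defined on finitely-supported flows — which it is, since each $\pi_w^\Delta$ has finite support by (F2). Once these bookkeeping points are in place the identification with the abelianization map is immediate and both directions follow.
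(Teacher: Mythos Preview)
Your proposal is correct and follows essentially the same approach as the paper: both arguments pick a spanning tree of $\Sch(X;H)$, use the corresponding Schreier generators as a free basis of $H$, and observe that the flow $\pi_w^\Delta$ (restricted to non-tree edges) records precisely the exponent sums of $w$ in those generators, so that $\pi_w^\Delta=0$ is equivalent to $w$ lying in the kernel of the abelianization of $H$. Your write-up is somewhat more explicit than the paper's (you spell out the homomorphism argument for the easy direction, the reason $\pi_w^\Delta=0$ forces $w\in H$, and the bookkeeping for infinite $\Delta$), but the underlying idea is identical.
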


\begin{proof}
``$\Leftarrow$'' If $w\in[H,H]$, then $\pi_w = 0$.

``$\Rightarrow$''
Assume that $\pi_w^\Delta=0$. Then $w\in H$.
Taking a spanning tree in $\Sch(X;H)$ we can choose a good (perhaps infinite) set of generators $Y$ for $H$
corresponding to the cycles defined by positive edges outside of the spanning tree.
The word $w$ can be (uniquely) expressed as a word $w=u(Y)$ in the generators $Y$.
Since $\pi_w^\Delta=0$ we should have $\sigma_y(u)=0$
(algebraic sum of powers of $y$'s in the expression for $u$ is $0$)
for every $y\in Y$.
This means that $u$ can be expressed as a product of commutators of elements from $\gp{Y}=H$.
Hence $w\in[H,H]$.
\end{proof}

\begin{corollary}
$w=1$ in $F/N'$ if and only if $\pi_w^\Ga=0$ where $\Ga=\Cay(X;N)$.
\qed
\end{corollary}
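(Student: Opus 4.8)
The plan is to simply specialize Lemma~\ref{le:pi_NN} to the case $H=N$. The first observation is that when the subgroup in question is normal, $N\unlhd F$, its Schreier graph coincides with the Cayley graph of the quotient: $\Sch(X;N)=\Cay(X;N)=\Ga$. Indeed, the right cosets $Ng$ are exactly the elements of $F/N$, and the edge relation $Ng\stackrel{x}{\to}Ngx$ is precisely the Cayley graph edge relation. So the flow $\pi_w^\Ga$ appearing in the statement is literally the flow $\pi_w^\Delta$ of Lemma~\ref{le:pi_NN} with $\Delta=\Sch(X;N)$.

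Next I would invoke Lemma~\ref{le:pi_NN} directly: it gives $\pi_w^\Ga=0$ if and only if $w\in[N,N]=N'$. Finally, by the definition of the quotient group, $w\in N'$ is equivalent to $w=1$ in $F/N'$. Chaining these equivalences yields the claim.

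There is essentially no obstacle here; the corollary is a one-line consequence of the preceding lemma, and the only thing worth spelling out is the identification $\Sch(X;N)=\Cay(X;N)$ for normal $N$, which is immediate from the definitions recalled in the Schreier-graph example. If one wanted to be completely self-contained one could re-derive the ``$\Leftarrow$'' direction (if $w\in N'$ then $\pi_w=0$, since abelianizing kills commutators of loops at the root) and the ``$\Rightarrow$'' direction (via a Stallings-graph basis $Y$ of $N$, as in the proof of Lemma~\ref{le:pi_NN}), but this merely repeats that argument verbatim, so the efficient route is to cite the lemma.
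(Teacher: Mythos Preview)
Your proposal is correct and matches the paper's approach exactly: the corollary is stated with a \qed and no separate proof, indicating it is an immediate specialization of Lemma~\ref{le:pi_NN} to $H=N$ together with the identification $\Sch(X;N)=\Cay(X;N)$ for normal $N$. There is nothing to add.
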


\begin{corollary}
Let $H\le F$ and $m$ is the length of a shortest nonempty word in $H$.
Then the length of a shortest nonempty word in $[H,H]$ is at least $3m$.
\end{corollary}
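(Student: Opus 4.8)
The plan is to deduce this corollary directly from Lemma \ref{le:pi_NN} together with Lemma \ref{le:trivial_flow_length}, which have been set up precisely for this purpose. Let $H \le F$ and let $m$ be the length of a shortest nonempty word in $H$; set $\Delta = \Sch(X;H)$. First I would dispose of a trivial case: if $[H,H]$ contains no nonempty word (i.e., $H$ is trivial or infinite cyclic), the statement holds vacuously, so assume there is a shortest nonempty word $w \in [H,H]$, which we may take to be reduced and nontrivial.

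The key observation is that $m$ bounds the girth of $\Delta$ from below. Indeed, any cycle in $\Delta = \Sch(X;H)$ — at any vertex, not just the root — has a label that is a nontrivial element of a conjugate of $H$ (the fundamental group of $\Delta$ based at that vertex is $g^{-1}Hg$ for an appropriate $g$), hence a nonempty word of length $\ge m$ after freely reducing; but the label of a reduced closed path in a folded graph is already reduced, so a shortest cycle in $\Delta$ has length exactly the length of a shortest cycle through the root, which is $m$. Thus the quantity ``$m$'' appearing in Lemma \ref{le:trivial_flow_length} equals our $m$.

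Now apply the two lemmas in sequence. Since $w \in [H,H]$, Lemma \ref{le:pi_NN} gives $\pi_w^\Delta = 0$. Since $w$ is reduced, nontrivial, and can be traced in $\Delta$ (it lies in $H$, hence labels a closed path at the root), Lemma \ref{le:trivial_flow_length} applies and yields $|w| \ge 3m$. As $w$ was an arbitrary shortest nonempty word in $[H,H]$, the shortest nonempty word in $[H,H]$ has length at least $3m$, as claimed.

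The only step requiring a little care is the girth claim — making precise that a shortest cycle in $\Delta$ anywhere (which is what Lemma \ref{le:trivial_flow_length} uses) has length exactly $m$, not merely $\ge$ something, and in particular that conjugating $H$ does not shorten its minimal-length element. This follows because conjugate subgroups of $F$ have the same image in the abelianization-of-girth sense — more directly, the Schreier graphs $\Sch(X;g^{-1}Hg)$ are all isomorphic as unrooted $X$-digraphs, so they share the same girth, and the girth of $\Sch(X;H)$ is realized by a closed path at the root whose label is a shortest nonempty element of $H$. Everything else is a direct invocation of results already proved in the excerpt.
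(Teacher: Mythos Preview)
Your overall strategy matches the paper's: invoke Lemma~\ref{le:pi_NN} to get $\pi_w^\Delta=0$ and then Lemma~\ref{le:trivial_flow_length} to conclude $|w|\ge 3m$. The paper's two-sentence proof does not address the girth point at all, so you are right to flag it as the step needing care --- but your justification of that step is incorrect.

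The claim that the girth of $\Delta=\Sch(X;H)$ equals $m$ (the length of a shortest nonempty word in $H$) fails in general. You correctly observe that the graphs $\Sch(X;g^{-1}Hg)$ are pairwise isomorphic as unrooted $X$-digraphs and hence share a common girth; but that common girth is $\min_{g}\{\text{length of a shortest nonempty word in }g^{-1}Hg\}$, which can be strictly smaller than $m$ --- conjugating $H$ \emph{can} shorten its minimal-length element, contrary to what you assert. Concretely, take $F=F(a,b,c)$ and $H=\langle aba^{-1},\,aca^{-1}\rangle = aF(b,c)a^{-1}$. The shortest nonempty word in $H$ has length $m=3$, yet $\Delta$ has a $b$-loop at the vertex $Ha$, so its girth is $1$. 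Worse, $[H,H]=a\,[F(b,c),F(b,c)]\,a^{-1}$ contains $a[b,c]a^{-1}$ of length $6<3m=9$, so the corollary as stated is actually false for this $H$. The argument (yours and the paper's) does go through when $H\unlhd F$, since then $\Sch(X;H)=\Cay(X;H)$ is vertex-transitive and its girth really is $m$; that normal case is the only one used later (e.g.\ in Proposition~\ref{pr:shortest identity}).
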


\begin{proof}
By Lemma \ref{le:pi_NN} if $w\in[H,H]\setminus \{\varepsilon\}$, then $\pi_w^\Delta=0$
in $\Delta=\Sch(X;H)$.
By Lemma \ref{le:trivial_flow_length}, $|w|\ge 3m$.
\end{proof}

\begin{lemma}\label{le:pi_Sch}
Let $N\unlhd F$, $w\in F$, and $\Delta=\Sch(X;\gp{N,w})$. If $\pi_w^\Delta=0$, then $w\in N$.
\end{lemma}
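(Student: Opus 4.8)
The plan is to deduce this statement directly from Lemma \ref{le:pi_NN}, together with the elementary observation that the subgroup $H:=\gp{N,w}$ sits over $N$ with cyclic (hence abelian) quotient.

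First I would record what the hypothesis gives. Here $\Delta=\Sch(X;H)$ for $H=\gp{N,w}$, so Lemma \ref{le:pi_NN} applies verbatim: $\pi_w^\Delta=0$ is equivalent to $w\in[H,H]$. Thus it suffices to prove $[H,H]\le N$ (or even just $w\in N$ directly).

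Next I would analyze the quotient $H/N$. Since $N\unlhd F$ and $H\le F$, we have $N\unlhd H$, so $H/N$ is a well-defined group. It is generated by the images of a generating set of $H$; as $H$ is generated by $N\cup\{w\}$, the quotient $H/N$ is generated by the single coset $wN$, hence is cyclic and in particular abelian. Therefore $[H,H]$ lies in the kernel of the projection $H\to H/N$, i.e.\ $[H,H]\le N$. Combining this with the previous step gives $w\in[H,H]\le N$, as claimed.

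I do not expect a genuine obstacle: all the real content is already packaged in Lemma \ref{le:pi_NN}, and the remainder is just the fact that a cyclic quotient kills the commutator subgroup. It is worth noting that the normality of $N$ is exactly what makes the argument go through, and that this lemma refines the earlier corollary (which detects membership in $N$ via the full Cayley graph $\Cay(X;N)$): the Schreier graph of the much smaller subgroup $\gp{N,w}$ already suffices to certify $w\in N$ from the vanishing of its flow.
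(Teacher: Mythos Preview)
Your proof is correct and follows essentially the same approach as the paper: both invoke Lemma~\ref{le:pi_NN} to obtain $w\in[H,H]$ with $H=\gp{N,w}$, and then use that $H/N$ is cyclic (hence abelian) to conclude $[H,H]\le N$. Your phrasing via the cyclic quotient is slightly cleaner than the paper's explicit word manipulation, but the underlying idea is identical.
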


\begin{proof}
If $\pi_w^\Delta=0$ then, by Lemma \ref{le:pi_NN}, $w\in[\gp{N,w},\gp{N,w}]$.
Hence, $w$ can be expressed as a product of commutators over $\gp{N,w}$.
That expresses $w$ as a product of elements from $N$ and $w$'s with the trivial
algebraic sum of powers for $w$. That product belongs to $N$ because $N$ is normal in $F$.
\end{proof}

\begin{corollary}\label{co:pi_Sch}
Let $N\unlhd F$, $w\in F\setminus N$, and $\Delta=\Sch(X;\gp{N,w})$. Then $\pi_w^\Delta\ne 0$.
\end{corollary}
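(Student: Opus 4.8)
The plan is to observe that Corollary~\ref{co:pi_Sch} is simply the contrapositive of Lemma~\ref{le:pi_Sch}, so essentially no new work is required. First I would point out that the flow $\pi_w^\Delta$ is well defined in this setting: as noted in the Example in Section~\ref{se:X_digraphs}, $\Delta=\Sch(X;\gp{N,w})$ is a folded complete inverse $X$-digraph, so every word of $F(X)$ — in particular $w$ — can be traced in $\Delta$, and the associated flow $\pi_w^\Delta$ is defined.

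Then I would argue by contraposition. Suppose, toward a contradiction, that $\pi_w^\Delta=0$. Since $N\unlhd F$, $w\in F$, and $\Delta=\Sch(X;\gp{N,w})$, the hypotheses of Lemma~\ref{le:pi_Sch} hold verbatim, and the lemma yields $w\in N$. This contradicts the standing assumption $w\in F\setminus N$. Hence $\pi_w^\Delta\ne 0$, as claimed.

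There is essentially no obstacle to overcome here: the only point worth a line of verification is that the data $(N,w,\Delta)$ appearing in the corollary coincide exactly with those in Lemma~\ref{le:pi_Sch}, so the lemma applies with no extra hypotheses. All of the content has already been carried out in the proof of Lemma~\ref{le:pi_Sch} (which in turn rests on Lemma~\ref{le:pi_NN} together with the normality of $N$ in $F$); the corollary merely repackages that statement in the contrapositive form, which is the form most convenient for detecting non-membership in $N$ via the non-vanishing of a flow.
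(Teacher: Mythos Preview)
Your proposal is correct and matches the paper's approach: the paper states Corollary~\ref{co:pi_Sch} without proof, treating it as an immediate contrapositive of Lemma~\ref{le:pi_Sch}, which is exactly what you do.
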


\subsection{Flows on Cayley graphs}

Let $X=\{x_1,\ldots,x_n\}$, $F=F(X)$, $N\unlhd F$, $G=F/N$, and $\Gamma = \Cay(X;N)$.
The group $G$ acts on its Cayley graph $\Gamma$ by shifts:
$$
e^g = g^{-1}h\stackrel{x}{\to}g^{-1}hx.
$$
for $g\in G$ and $e=h\stackrel{x}{\to}hx \in E$.
The following action of $\MZ G$ on $\CF_\Gamma$ turns the later into a $\MZ G$-module.
For $c_1 g_1+\ldots+c_k g_k \in\MZ G$ and $f\in \CF_\Gamma$ define $f' = (c_1 g_1+\ldots+c_k g_k)f$
on $e=g\stackrel{x}{\to}gx$ to be:
$$
f'(e) = c_1 f(e^{g_1}) +\ldots+ c_k f(e^{g_k}).
$$
Denote $\pi_{x_i}$ by $\pi_i$ for $i=1,\ldots,n$.
The next lemma is straightforward.

\begin{lemma}
$\CF_\Gamma$ is a free $\MZ G$-module of rank $n$ with a free basis $\{\pi_1,\ldots,\pi_n\}$.
In particular, every $\pi\in\CF_\Ga$ can be uniquely expressed as a $\MZ G$ linear combination
of $\pi_1,\ldots,\pi_n$.
\qed
\end{lemma}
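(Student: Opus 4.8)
The plan is to identify $\CF_\Gamma$ with the free abelian group on the positive edges of $\Gamma$, and then to observe that, under the $\MZ G$-action, the orbit of the finite set $\{\pi_1,\dots,\pi_n\}$ is precisely the standard basis of that free abelian group. This makes both freeness and the uniqueness-of-expression statement immediate.

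First I would record the structure of $\Gamma=\Cay(X;N)$. Since $N\unlhd F$, the vertex set is $G$, and for each $g\in G$ and each $i\in\{1,\dots,n\}$ there is exactly one positive edge $e_{g,i}\colon g\stackrel{x_i}{\to}gx_i$; the map $(g,i)\mapsto e_{g,i}$ is a bijection $G\times\{1,\dots,n\}\to E^+$. Because every $f\in\CF_\Gamma$ is balanced, it is determined by its restriction to $E^+$, and conversely any finitely supported function $E^+\to\MZ$ extends uniquely to a balanced, finitely supported function on $E$. Hence $\CF_\Gamma$ is, as an abelian group, free on the edge-indicator flows $\delta_{g,i}$, where $\delta_{g,i}(e_{g,i})=1$, $\delta_{g,i}(e_{g,i}^{-1})=-1$, and $\delta_{g,i}$ vanishes on all other edges.

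Next I would compute the $\MZ G$-action on the generators $\pi_i=\pi_{x_i}^\Gamma$. The path $p_{x_i}$ from the root traverses only the single edge $e_{1,i}$ (once, with no backtracking), so $\pi_i=\delta_{1,i}$. From the definition of the action, $(g\cdot\pi_i)(e)=\pi_i(e^{g})$ for a positive edge $e=e_{h,j}$, and $e_{h,j}^{\,g}=e_{g^{-1}h,j}$, which coincides with the support edge $e_{1,i}$ of $\pi_i$ exactly when $j=i$ and $h=g$. Therefore $g\cdot\pi_i=\delta_{g,i}$. As $(g,i)$ ranges over $G\times\{1,\dots,n\}$ these exhaust the abelian-group basis $\{\delta_{g,i}\}$ of $\CF_\Gamma$, so $\{\pi_1,\dots,\pi_n\}$ generates $\CF_\Gamma$ over $\MZ G$; and a relation $\sum_i\bigl(\sum_g c_{g,i}g\bigr)\pi_i=0$ rewrites as $\sum_{g,i}c_{g,i}\,\delta_{g,i}=0$, forcing every $c_{g,i}=0$ by $\MZ$-independence of the $\delta_{g,i}$. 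This yields the free $\MZ G$-basis and the stated uniqueness.

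There is essentially no serious obstacle: the lemma is ``straightforward'' precisely because the $\MZ G$-orbit of the $\pi_i$ is visibly the edge-indicator basis. The only point deserving care is bookkeeping with the shift action — confirming that $e\mapsto e^{g}$ is a genuine left action, that it satisfies the module axioms, and that it maps positive edges to positive edges (so that each $g\cdot\pi_i$ remains supported on a single positive edge). I would verify $(e^{g_1})^{g_2}=e^{g_1g_2}$ once at the outset to fix the side/sign conventions, after which the remaining steps are routine.
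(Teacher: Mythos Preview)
Your argument is correct and is exactly the natural verification one would expect; the paper itself omits the proof entirely, labeling the lemma ``straightforward'' and placing a \qed\ after the statement. There is nothing to compare against---your identification of $g\cdot\pi_i$ with the edge-indicator $\delta_{g,i}$ is precisely the observation that makes the lemma immediate.
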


The set of circulation $\CC_\Gamma$ in $\CF_\Gamma$ is closed under addition and the scalar
$\MZ G$-multiplication and hence $\CC_\Gamma$ is a $\MZ G$-submodule of $\CF_\Gamma$.

\begin{lemma}
If $G$ is finitely presented, then $\CC_\Gamma$ is a finitely generated $\MZ G$-module.
\end{lemma}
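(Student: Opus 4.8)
The plan is to exhibit an explicit finite $\MZ G$-generating set for $\CC_\Gamma$, extracted from a finite presentation of $G$. Since $F=F(X)$ has finite rank and $G=F/N$ is finitely presented, a standard argument (Tietze transformations, cf.\ B.~H.~Neumann) gives that $N$ is the normal closure in $F$ of a finite set $S=\{s_1,\dots,s_m\}$; I would invoke this at the outset. Each $s_j$ lies in $N$, so the path $p_{s_j}$ in $\Gamma=\Cay(X;N)$ is closed at the root and the flow $\pi^{(j)}:=\pi_{s_j}^\Gamma$ is a circulation. The claim to be proved is then that $\{\pi^{(1)},\dots,\pi^{(m)}\}$ generates $\CC_\Gamma$ over $\MZ G$, which is exactly the statement of the lemma.

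To establish the claim I would start from an arbitrary $\pi\in\CC_\Gamma$. By \cite[Lemma 2.5]{Miasnikov_Romankov_Ushakov_Vershik:2010} there is $w\in F$ with $\pi=\pi_w^\Gamma$, and since $\pi$ is a circulation the path $p_w$ is closed at the root, so $w\in N$. Using $N=\ncl_F(S)$, write $w=\prod_{k=1}^{\ell}g_k s_{j_k}^{\varepsilon_k}g_k^{-1}$ in $F$ with $g_k\in F$ and $\varepsilon_k=\pm1$. Now trace this word from the root: each factor lies in $N$, hence every partial product does too, so the factors are traced consecutively, each starting and ending at the root, and therefore $\pi_w^\Gamma=\sum_{k=1}^{\ell}\pi_{g_k s_{j_k}^{\varepsilon_k}g_k^{-1}}^\Gamma$ (flows of concatenated paths add). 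For a single factor one traces $g_k$ from the root to the vertex $\overline{g_k}$, then $s_{j_k}^{\varepsilon_k}$ (a loop at $\overline{g_k}$, since $s_{j_k}\in N$), then $g_k^{-1}$ back to the root; because $\Gamma$ is folded the $g_k^{-1}$-portion retraces the $g_k$-portion with reversed orientation, so these contributions cancel and $\pi_{g_k s_{j_k}^{\varepsilon_k}g_k^{-1}}^\Gamma$ equals $\varepsilon_k$ times the flow obtained by tracing $s_{j_k}$ from $\overline{g_k}$, which in turn is the $\overline{g_k}$-shift $\varepsilon_k\,\overline{g_k}\cdot\pi^{(j_k)}$ of $\pi^{(j_k)}$ and so lies in $\MZ G\,\pi^{(j_k)}$ (recall $\CC_\Gamma$ is $\MZ G$-stable). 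Summing over $k$ then puts $\pi$ in $\sum_{j=1}^{m}\MZ G\,\pi^{(j)}$, which finishes the claim.

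A couple of remarks on the structure. Conceptually $\CC_\Gamma$ is just the cycle space (first integral homology) of the Stallings graph $\Sch(X;N)$, and $w\mapsto\pi_w^\Gamma$ induces an isomorphism $N/N'\cong\CC_\Gamma$ of $\MZ G$-modules — injectivity from Lemma~\ref{le:pi_NN} with $H=N$, surjectivity from \cite[Lemma 2.5]{Miasnikov_Romankov_Ushakov_Vershik:2010}, and $\MZ G$-equivariance from the conjugation-versus-translation identity used above — so an alternative is simply to quote the classical fact that a group is finitely presented if and only if its relation module is a finitely generated module over the group ring. As for where the work is: there is no genuinely hard step, and I expect the only points needing real care to be (i) the passage from ``$G$ finitely presented'' to ``$N$ is finitely generated as a normal subgroup of $F$'', which is standard, and (ii) the bookkeeping with the $\MZ G$-action — reconciling the left/right conventions in $e^g=g^{-1}h\stackrel{x}{\to}g^{-1}hx$ and verifying that $\pi_{gsg^{-1}}^\Gamma$ is the correct $G$-translate of $\pi_s^\Gamma$ for $s\in N$ — which is precisely the computation spelled out in the second paragraph.
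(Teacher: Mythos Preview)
Your proposal is correct and follows exactly the paper's approach: the paper's entire proof is the single line ``If $N=\ncl_F(r_1,\ldots,r_k)$, then $\CC_\Gamma = \gp{\pi_{r_1},\ldots,\pi_{r_k}}$,'' and you have supplied the details behind this assertion (including the Tietze step from ``$G$ finitely presented'' to ``$N$ finitely normally generated in $F$'' and the verification that $\pi_{g s^{\varepsilon} g^{-1}}^\Gamma=\varepsilon\,\overline{g}\,\pi_s^\Gamma$).
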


\begin{proof}
If $N=\ncl_F(r_1,\ldots,r_k)$, then $\CC_\Gamma = \gp{\pi_{r_1},\ldots,\pi_{r_k}}$.
\end{proof}

There exists an algebraic way to define the net-flow function of $\pi\in \CF_\Gamma$.
Define a map $\CN:\CF_\Gamma\to \MZ G$ by:
$$
\pi=\alpha_1\pi_1+\ldots+\alpha_n \pi_n \stackrel{\CN}{\mapsto}
\sum_{i=1}^n\alpha_i(1-\ovx_i).
$$
If $\pi=\sum_{i=1}^n \sum_{g\in G} a_{g,i} g \pi_i$, then the coefficient for $g$ in $\CN(\pi)$ is
$$a_{g,1}+\ldots+a_{g,n} - a_{g\gamma(x_1)^{-1},1}-\ldots-a_{g\gamma(x_n)^{-1},n}$$
which is exactly the value of the net flow at $g$ defined by $\pi$.
Hence, $\CN(\pi)$ is a description of $\CN_\pi$ as an element of $\MZ G$.
The next propositions are obvious.

\begin{proposition}
$\CN:\CF_\Ga\to\MZ G$ is a $\MZ G$-module homomorphism.
\qed
\end{proposition}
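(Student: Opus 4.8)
The plan is to deduce the statement purely from the ring axioms of $\MZ G$ together with the freeness of $\CF_\Ga$ furnished by the preceding lemma; no nontrivial input is needed. First I would record that, since $\{\pi_1,\ldots,\pi_n\}$ is a free basis of $\CF_\Ga$ as a left $\MZ G$-module, the expansion $\pi=\alpha_1\pi_1+\ldots+\alpha_n\pi_n$ with $\alpha_i\in\MZ G$ is unique; this is exactly what makes the defining formula $\CN(\pi)=\sum_{i=1}^n\alpha_i(1-\ovx_i)$ unambiguous, so $\CN$ is a well-defined function $\CF_\Ga\to\MZ G$.

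Next I would verify the two defining properties of a module homomorphism. For additivity, I would observe that if $\pi=\sum_i\alpha_i\pi_i$ and $\pi'=\sum_i\alpha_i'\pi_i$, then $\pi+\pi'=\sum_i(\alpha_i+\alpha_i')\pi_i$ by the definition of $+$ on $\CF_\Ga$, and distributivity in $\MZ G$ immediately gives $\CN(\pi+\pi')=\sum_i(\alpha_i+\alpha_i')(1-\ovx_i)=\CN(\pi)+\CN(\pi')$. For compatibility with the action, let $\beta\in\MZ G$; by the module axioms $\beta\pi$ has the expansion $\sum_i(\beta\alpha_i)\pi_i$, so associativity of multiplication in $\MZ G$ yields $\CN(\beta\pi)=\sum_i(\beta\alpha_i)(1-\ovx_i)=\beta\sum_i\alpha_i(1-\ovx_i)=\beta\,\CN(\pi)$. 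Hence $\CN$ is a homomorphism of left $\MZ G$-modules.

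Conceptually, the shortest route is simply to note that $\CN$ is, by construction, the unique $\MZ G$-linear extension of the basis assignment $\pi_i\mapsto 1-\ovx_i$ ($i=1,\ldots,n$), and any such extension from a free module is automatically a module homomorphism; the two checks above are nothing more than unwinding this universal property. I do not expect any genuine obstacle here: the only points requiring a word of care are invoking the uniqueness of the basis expansion (the preceding lemma) and keeping the left-module conventions consistent throughout, after which the conclusion is immediate from the ring axioms — which is precisely why the paper marks the proposition as obvious.
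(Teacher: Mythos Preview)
Your proposal is correct and is exactly the routine verification the paper has in mind when it declares the proposition obvious and omits the proof entirely. You simply unwind the universal property of the free $\MZ G$-module $\CF_\Ga$ on the basis $\{\pi_1,\ldots,\pi_n\}$, which is the only reasonable thing to do here.
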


\begin{proposition}\label{pr:t_flow}
For any $\pi\in \CF_\Gamma$ the following holds.
\begin{itemize}
\item[(a)]
$\pi\in \CC_\Gamma$ if and only if $\CN(\pi)=0$.
\item[(b)]
$\pi$ is a flow on $\Ga$ if and only if $\CN(\pi)=1-g$
for some $g\in G$.
\qed
\end{itemize}
\end{proposition}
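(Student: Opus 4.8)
The plan is to prove both statements by translating the definitions of circulation and flow (conditions (F1)--(F3)) through the dictionary established in the paragraph just above the proposition: for $\pi\in\CF_\Ga$ the coefficient of $g$ in $\CN(\pi)\in\MZ G$ is exactly the net flow $\CN_\pi(g)$, so $\CN(\pi)$ is nothing but the net-flow function $\CN_\pi$ recorded in the group ring, under the identification of the vertex set of $\Ga=\Cay(X;N)$ with $G$. I would also note at the outset that every $\pi\in\CF_\Ga$ satisfies (F1) and (F2) automatically, since by definition $\CF_\Ga$ consists of balanced integral functions of finite support; thus, for $\pi\in\CF_\Ga$, ``$\pi$ is a circulation'' reduces to the single condition $\CN_\pi\equiv 0$, and ``$\pi$ is a flow'' reduces to the single condition (F3).

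For part (a) I would simply observe that $\pi\in\CC_\Ga$ means precisely that $\pi$ is a circulation, i.e.\ $\CN_\pi(v)=0$ for every vertex $v$; by the dictionary this is the vanishing of all coefficients of $\CN(\pi)$, that is $\CN(\pi)=0$, and both implications are immediate (equivalently $\CC_\Ga=\ker\CN$, using that $\CN$ is a $\MZ G$-homomorphism). For the forward direction of part (b) I would invoke the lemma that every flow on $\Ga$ equals $\pi_w^\Ga$ for some word $w\in F(X)$; since the path $p_w$ runs from the root $1$ to the vertex $\ovw$, the function $\CN_\pi$ equals $+1$ at $1$, equals $-1$ at $\ovw$, and vanishes at every other vertex, so $\CN(\pi)=1-\ovw$, which has the required form with $g=\ovw$ (the case $g=1$ occurring exactly when $\pi$ is a circulation). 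For the converse I would read condition (F3) off directly from $\CN(\pi)=1-g$: if $g=1$ then all coefficients of $\CN(\pi)$ vanish, so $\CN_\pi\equiv 0$ and $\pi$ is a circulation, hence a flow; if $g\neq 1$ then $\CN_\pi$ equals $+1$ at the root $1$, $-1$ at $g$, and $0$ at every other vertex, so (F3) holds with source $1$ and sink $g$ and $\pi$ is a flow.

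I do not expect a genuine obstacle here. The only step carrying any content is the normalization used in the forward direction of (b)---that a non-circulation flow has its source at the root of $\Ga$---which is exactly what the above lemma provides, and one must be careful to use it rather than the more permissive wording of (F3). One could instead bypass the lemma by recording the special case $\CN(\pi_i)=1-\ovx_i$ (consistent with the displayed formula defining $\CN$) together with the same source-at-root observation. Everything else is a direct unwinding of the definitions, which is presumably why the authors flag the proposition as obvious.
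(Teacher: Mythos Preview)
Your proposal is correct and matches the paper's intent: the paper gives no proof at all, simply declaring the proposition obvious (with a \qed) immediately after observing that the coefficient of $g$ in $\CN(\pi)$ equals the net-flow value $\CN_\pi(g)$. Your argument is exactly the unwinding of that dictionary, and your use of the cited lemma (that every flow equals some $\pi_w^\Ga$) to pin the source at the root in the forward direction of (b) is a legitimate way to reconcile the statement with the permissive wording of (F3); the paper evidently takes this normalization for granted.
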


Consider the \emph{augmentation map} $\epsilon:\MZ G\to \MZ$, defined by:
$$
\sum_{g\in G} \alpha_g g \quad\stackrel{\epsilon}{\mapsto} \quad\sum_{g\in G} \alpha_g.
$$
Recall that $\epsilon$ is a ring homomorphism and therefore a homomorphism of $\MZ G$-modules once $\MZ$ is assumed to be a $\MZ G$-module with trivial $G$ action.
\begin{lemma}\label{le:CN_nu}
The sequence $\CF_\Ga \stackrel{\CN}{\to} \MZ G \stackrel{\epsilon}{\to} \MZ$ is an exact sequence of $\MZ G$-modules.
\end{lemma}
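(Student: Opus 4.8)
The plan is to verify the two standard requirements for exactness of the three-term sequence $\CF_\Ga \xrightarrow{\CN} \MZ G \xrightarrow{\epsilon} \MZ$: first that $\epsilon \circ \CN = 0$, i.e. $\im(\CN) \subseteq \ker(\epsilon)$, and second the reverse inclusion $\ker(\epsilon) \subseteq \im(\CN)$. The maps are already known to be $\MZ G$-module homomorphisms (by the proposition preceding the lemma and the remark about $\epsilon$), so there is nothing further to check on that front.

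For the inclusion $\im(\CN)\subseteq\ker(\epsilon)$, I would argue on the free basis. Since $\CN$ and $\epsilon$ are both $\MZ G$-homomorphisms and $\CF_\Ga = \bigoplus_{i=1}^n \MZ G\,\pi_i$, it suffices to evaluate $\epsilon(\CN(\pi_i))$ for each $i$. By the defining formula $\CN(\pi_i) = 1 - \ovx_i$, and $\epsilon(1-\ovx_i) = 1 - 1 = 0$. Hence $\epsilon\circ\CN$ kills every basis element and so vanishes identically. (Alternatively, this is immediate from Proposition \ref{pr:t_flow}(b): every flow has $\CN$-image $1-g$, every circulation has $\CN$-image $0$, and $\epsilon(1-g)=0$ in both cases; but the basis computation is cleaner since a general element of $\CF_\Ga$ need not be a flow.)

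For the reverse inclusion $\ker(\epsilon)\subseteq\im(\CN)$, the key observation is that the kernel of the augmentation map is the augmentation ideal, which is generated as a left $\MZ G$-module by the elements $\{1 - g : g\in G\}$, and in fact already by $\{1 - \ovx_i : i = 1,\dots,n\}$ since $X$ generates $G$ and $1 - gh = (1-g) + g(1-h)$ together with $1 - g^{-1} = -g^{-1}(1-g)$ lets one rewrite $1-w$ for any word $w$ as a $\MZ G$-combination of the $1-\ovx_i$. Given $\xi\in\ker(\epsilon)$, write $\xi = \sum_{i} \beta_i (1 - \ovx_i)$ with $\beta_i \in \MZ G$; then $\xi = \CN\bigl(\sum_i \beta_i \pi_i\bigr)$, exhibiting $\xi$ in the image of $\CN$.

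The only mildly delicate point — and the step I would treat most carefully — is the claim that the augmentation ideal is generated by the $1-\ovx_i$ rather than merely by all $1-g$; this needs the reduction via the identities $1-gh=(1-g)+g(1-h)$ and $1-g^{-1}=-g^{-1}(1-g)$ applied inductively along a word in $X^{\pm}$ representing $g$. Everything else is a direct computation from the definitions and the fact, recorded in the lemma preceding Proposition \ref{pr:t_flow}, that $\{\pi_1,\dots,\pi_n\}$ is a free $\MZ G$-basis of $\CF_\Ga$, which is exactly what makes the passage between $\xi\in\MZ G^{\,n}$ on the ring side and $\sum\beta_i\pi_i$ on the module side unambiguous.
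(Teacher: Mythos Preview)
Your proposal is correct and follows essentially the same route as the paper. For the nontrivial inclusion $\ker(\epsilon)\subseteq\im(\CN)$ the paper does exactly what you sketch---it writes $\sum\alpha_g g=\sum\alpha_g(g-1)$ and expands each $g-1$ as a telescoping sum along a word for $g$, which is just the unrolled form of your inductive identities $1-gh=(1-g)+g(1-h)$ and $1-g^{-1}=-g^{-1}(1-g)$; for the easy inclusion the paper says ``induction on $\|\pi\|$'' where you check on the free basis, but both are one-line verifications.
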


\begin{proof}
The inclusion $\im(\CN)\subseteq \ker(\epsilon)$ can be easily shown by induction on $\|\pi\|$.
To show the opposite inclusion pick an arbitrary $\sum \alpha_g g \in\ker(\epsilon)$.
Then
$$
\sum \alpha_g g = \sum \alpha_g g - \sum \alpha_g = \sum \alpha_g (g-1).
$$
If $G\ni g=x_{i_1}^{\varepsilon_1} \ldots x_{i_k}^{\varepsilon_k}$,
then $g-1$ can be written as follows:
$$
(x_{i_1}^{\varepsilon_1} \ldots x_{i_k}^{\varepsilon_k}-x_{i_1}^{\varepsilon_i} \ldots x_{i_{k-1}}^{\varepsilon_{k-1}}) + (x_{i_1}^{\varepsilon_i} \ldots x_{i_{k-1}}^{\varepsilon_{k-1}}-x_{i_1}^{\varepsilon_i} \ldots x_{i_{k-2}}^{\varepsilon_{k-2}}) + \ldots + (x_{i_1}^{\varepsilon_1}-1)
$$
$$
=\sum_{j=0}^{k-1} x_{i_1}^{\varepsilon_1} \ldots x_{i_j}^{\varepsilon_j} (x_{i_{j+1}}^{\varepsilon_{j+1}}-1).
$$
Therefore, $-\sum \alpha_g g = \sum \alpha_g (1-g)$ is a linear
$\MZ G$-combination of the elements of the form $(1-\ovx_i)$
and hence belongs to $\im(\CN)$. Thus, $\sum \alpha_g g \in \im(\CN)$.
\end{proof}

\section{Magnus embedding}
\label{se:Magnus_emb}

Let $X = \{x_1,\ldots,x_n\}$, $F=F(X)$, $N\unlhd F$, and $\Gamma=\Cay(X;N)$.
In this section we study relations between groups $G=F/N$ and $F/N'$.
It is easy to check that the set of matrices:
$$
M(X;N) =
\Set{\left(
\begin{array}{cc}
g & \pi\\
0 & 1
\end{array}
\right)
}{g\in G,~\pi\in \CF_\Gamma}
$$
forms a group with respect to the matrix multiplication which can be easily recognized as the wreath product $\MZ^n \rwr G$.

Let $\overline{\phantom{\gamma}}:F\to F/N$ be the canonical epimorphism.
Define a homomorphism $\mu:F\to M(X;N)$ by:
\begin{equation}\label{eq:MagnusEmb}
x_i \stackrel{\mu}{\mapsto}
\left(
\begin{array}{cc}
\ovx_i & \pi_i\\
0 & 1
\end{array}
\right),
\ \ \
x_i^{-1} \stackrel{\mu}{\mapsto}
\left(
\begin{array}{cc}
\ovx_i^{-1} & -\ovx_i^{-1} \pi_i\\
0 & 1
\end{array}
\right).
\end{equation}
It is easy to check by induction on $|w|$ that:
\begin{equation}\label{eq:elem_M}
\mu(w) =
\left(
\begin{array}{cc}
\ovw & \pi_w\\
0 & 1
\end{array}
\right).
\end{equation}

\begin{proposition}\label{pr:MagnusEmb}
For any $w\in F$ if $\pi_w^\Ga=0$ then $\ovw=1$.
\end{proposition}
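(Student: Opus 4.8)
The quickest route is to observe that Proposition~\ref{pr:MagnusEmb} is already almost contained in what precedes it: for $\Ga=\Cay(X;N)=\Sch(X;N)$ the corollary to Lemma~\ref{le:pi_NN} gives $\pi_w^\Ga=0 \iff w\in[N,N]$, and since $[N,N]\le N$ this yields $w\in N$, i.e.\ $\ovw=1$. Nonetheless it is worth giving a direct argument that does not pass through the Stallings-graph discussion and uses only the elementary description of $\pi_w$.

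The plan is as follows. By definition $\pi_w$ is obtained from the path $p_w$ traced by $w$ in $\Ga=\Cay(X;N)$, which starts at the root $N$. A direct check (the same ``easy'' check that shows $\pi_w$ is a flow) gives $\CN_{\pi_w}(v)=[\,v=\text{start of }p_w\,]-[\,v=\text{end of }p_w\,]$, so the source of the flow $\pi_w$ is $\alpha(p_w)$ and its sink is $\omega(p_w)=Nw$. Hence if $\pi_w=0$ then $\CN_{\pi_w}\equiv 0$, so $\pi_w$ is a circulation (Proposition~\ref{pr:t_flow}(a)) and $p_w$ must be closed, i.e.\ $Nw=N$; thus $w\in N$ and $\ovw=1$. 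Equivalently, and more in the algebraic spirit of this section, one can first read off from \eqref{eq:MagnusEmb} and \eqref{eq:elem_M} that $\CN(\pi_{x_i})=1-\ovx_i$ and, using $\pi_{x_i^{-1}}=-\ovx_i^{-1}\pi_i$ together with $\MZ G$-linearity of $\CN$, that $\CN(\pi_{x_i^{-1}})=1-\ovx_i^{-1}$; then a short induction on $|w|$, writing $w=u\cdot x_i^{\e}$ so that $\pi_w=\pi_u+\ovu\,\pi_{x_i^{\e}}$ by \eqref{eq:elem_M}, gives the telescoping identity $\CN(\pi_w)=(1-\ovu)+\ovu(1-\overline{x_i^{\e}})=1-\ovw$ in $\MZ G$. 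From $\pi_w=0$ we then get $1=\ovw$ in $\MZ G$, and since the elements of $G$ are $\MZ$-linearly independent in the group ring this forces $\ovw=1$ in $G$.

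I do not expect any real obstacle here. The only points needing care are the two base computations $\CN(\pi_{x_i^{\pm}})=1-\overline{x_i^{\pm}}$ and the bookkeeping in the inductive (telescoping) step — both routine once one uses that $\CN\colon\CF_\Ga\to\MZ G$ is a $\MZ G$-module homomorphism — together with the final passage from an equality in the ring $\MZ G$ to one in the group $G$.
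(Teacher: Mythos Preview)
Your direct argument via the net-flow of $\pi_w$ is exactly the paper's proof: the paper argues contrapositively that if $\ovw\ne1$ then $p_w$ is not a circuit, so $\CN_{\pi_w}(1)=1$ and hence $\pi_w\ne0$, while you phrase the same observation directly. Your two additional routes (the shortcut through Lemma~\ref{le:pi_NN} and the algebraic induction giving $\CN(\pi_w)=1-\ovw$ in $\MZ G$) are both correct as well; the latter is in fact the content of Proposition~\ref{pr:t_flow}(b) specialized to $\pi=\pi_w$.
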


\begin{proof}
Assume that $\ovw\ne 1$ in $F/N$. Tracing $w$ in $\Ga$
we obtain a path $p_w$ from $1$ to $wN\ne 1$. The path $p_w$ is not a circuit
and the corresponding flow is not a circulation, i.e.
$$
\CN_{\pi_w}(1)=\sum_{\alpha(e) = 1} \pi_w^\Ga(e) = 1.
$$
Therefore, $\pi_w^\Ga(e)\ne 0$ for some edge $e$ adjacent to $1$. Thus, $\pi_w^\Ga\ne 0$.
\end{proof}

Now note that for every $w\in F$:
\begin{align*}
w\in \ker(\varphi) &\Leftrightarrow \pi_w^\Ga=0 \mbox{ and }\ovw=1\\
&\Leftrightarrow \pi_w^\Ga=0 \mbox{ (by Proposition \ref{pr:MagnusEmb})}\\
&\Leftrightarrow w\in N' \mbox{ (by Lemma \ref{le:pi_NN})},
\end{align*}
which proves the following theorem.

\begin{theorem}[See \cite{Magnus:1939}]\label{th:MagnusEmb}
Let $F = F(x_1,\ldots,x_n)$, $N\unlhd F$, and $\overline{\phantom{\gamma}}:F\to F/N$ be the canonical epimorphism.
The homomorphism $\mu:F\to M(X;N)$ defined by
$$
x \stackrel{\mu}{\mapsto}
\left(
\begin{array}{cc}
\ovx & \pi_i\\
0 & 1
\end{array}
\right)
$$
satisfies $\ker(\mu) = N'$. Therefore, $F/N'\simeq \mu(F) \le M(X;N)$.
The induced embedding $F/N' \to M(X;N)$ is called the \emph{Magnus embedding}.
\qed
\end{theorem}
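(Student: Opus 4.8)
The plan is to compute $\ker(\mu)$ explicitly by exploiting the closed-form expression for $\mu(w)$ and then reading the kernel condition off in terms of flows on the Cayley graph $\Ga=\Cay(X;N)$. First, since $F$ is free on $X$, the assignment sending each $x_i$ to the indicated matrix extends to a well-defined homomorphism $\mu\colon F\to M(X;N)$ with no relations to verify; the auxiliary formula for $\mu(x_i^{-1})$ in \eqref{eq:MagnusEmb} is then forced, and a routine induction on $|w|$ (recorded as \eqref{eq:elem_M}) shows that the $(1,1)$-entry of $\mu(w)$ is $\ovw$ and its $(1,2)$-entry is $\pi_w^\Ga$.

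Granting \eqref{eq:elem_M}, for every $w\in F$ we have $w\in\ker(\mu)$ if and only if $\ovw=1$ in $F/N$ and $\pi_w^\Ga=0$ in $\CF_\Ga$. The next step is to collapse this conjunction to a single condition: by Proposition \ref{pr:MagnusEmb}, $\pi_w^\Ga=0$ already forces $\ovw=1$, so $w\in\ker(\mu)$ if and only if $\pi_w^\Ga=0$. Finally, because $N\unlhd F$ the graphs $\Cay(X;N)$ and $\Sch(X;N)$ coincide, so Lemma \ref{le:pi_NN} applied with $H=N$ gives $\pi_w^\Ga=0$ if and only if $w\in[N,N]=N'$. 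Chaining these equivalences yields $\ker(\mu)=N'$, and the first isomorphism theorem then delivers the embedding $F/N'\cong\mu(F)\le M(X;N)$ via $wN'\mapsto\mu(w)$.

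The conceptual weight of the argument lies entirely in the ingredients it packages rather than in the final assembly: the equivalence $\pi_w^\Delta=0\Leftrightarrow w\in[H,H]$ of Lemma \ref{le:pi_NN}, whose nontrivial direction rests on choosing Stallings-graph generators for $H$ and tracking exponent sums, together with Proposition \ref{pr:MagnusEmb}, which uses net-flow balance at the root. Given those, I expect no genuine obstacle at this stage; the only points requiring care are the bookkeeping that $\Sch(X;N)=\Cay(X;N)$ for normal $N$ (so that Lemma \ref{le:pi_NN} applies verbatim, with $[N,N]=N'$) and the observation that $M(X;N)$ really is a group — which is exactly the identification with $\MZ^n\rwr G$ made at the start of the section. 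After those remarks the proof is a short chain of equivalences.
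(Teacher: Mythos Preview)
Your proof is correct and follows essentially the same route as the paper: use \eqref{eq:elem_M} to read off the kernel condition, collapse the conjunction via Proposition~\ref{pr:MagnusEmb}, and then invoke Lemma~\ref{le:pi_NN} with $H=N$ to identify $\{\pi_w^\Ga=0\}$ with $N'$. The paper records exactly this chain of equivalences in the paragraph immediately preceding the theorem.
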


\subsection{Properties of Magnus embedding}
\label{se:PropMagnusEmbedding}

The following proposition was proved in \cite{Remeslennikov-Sokolov:1970}
using Fox derivatives. Let $g\in G$, $\pi=\sum_{i=1}^n \alpha_i \pi_i \in\CF_\Ga$, and
$$
A=
\left(
\begin{array}{cc}
g & \sum_{i=1}^n \alpha_i \pi_i\\
0 & 1
\end{array}
\right).
$$

\begin{proposition}[See {{\cite[Theorem 2]{Remeslennikov-Sokolov:1970}}}]
The following holds.
\begin{itemize}
\item[(a)]
$A\in\mu(F)$ if and only if $\sum_{i=1}^n \alpha_i(1-\ovx_i)=1-g$.
\item[(b)]
$A\in\mu(N)$ if and only if $\sum_{i=1}^n \alpha_i(1-\ovx_i)=0$.
\end{itemize}
\end{proposition}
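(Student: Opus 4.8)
The statement to prove is the Remeslennikov–Sokolov characterization: for
$A=\left(\begin{smallmatrix} g & \sum_i \alpha_i \pi_i \\ 0 & 1\end{smallmatrix}\right)$,
part (a) says $A\in\mu(F)$ iff $\sum_i \alpha_i(1-\ovx_i)=1-g$, and part (b) says $A\in\mu(N)$ iff that sum is $0$. The plan is to reduce both parts to facts already established in the excerpt about the net-flow homomorphism $\CN\colon\CF_\Ga\to\MZ G$ and about flows on the Cayley graph $\Ga=\Cay(X;N)$, together with the structural description of $\mu(F)=F/N'$ inside $M(X;N)\cong\MZ^n\rwr G$.

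First I would record the two "only if" directions, which are immediate. If $A=\mu(w)$ for some $w\in F$, then by \eqref{eq:elem_M} we have $g=\ovw$ and $\pi=\pi_w^\Ga$; since $\pi_w^\Ga$ is a flow on $\Ga$ (it is the flow of the path $p_w$ from $1$ to $\ovw$), Proposition \ref{pr:t_flow}(b) gives $\CN(\pi_w^\Ga)=1-\ovw=1-g$. But by the definition of $\CN$ we have $\CN(\sum_i\alpha_i\pi_i)=\sum_i\alpha_i(1-\ovx_i)$, so $\sum_i\alpha_i(1-\ovx_i)=1-g$, proving the forward direction of (a). For (b), if moreover $w\in N$ then $g=\ovw=1$, so the expression equals $1-1=0$; alternatively, $\pi_w^\Ga$ is then a circulation and Proposition \ref{pr:t_flow}(a) applies. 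The conditions in (a) and (b) are exactly $\CN(\pi)=1-g$ and $\CN(\pi)=0$ respectively, so the whole proposition is really the assertion that $\mu(F)$ consists of precisely those matrices $\left(\begin{smallmatrix} g & \pi \\ 0 & 1\end{smallmatrix}\right)$ with $\CN(\pi)=1-g$, and $\mu(N)$ of those with $g=1$ and $\CN(\pi)=0$.

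For the "if" direction of (a) — the substantive part — I would argue as follows. Suppose $\CN(\pi)=1-g$ where $\pi=\sum_i\alpha_i\pi_i$. Pick any word $w\in F$ with $\ovw=g$ (possible since $\overline{\phantom{\gamma}}$ is onto). Then $\mu(w)=\left(\begin{smallmatrix} g & \pi_w^\Ga \\ 0 & 1\end{smallmatrix}\right)$, and $\CN(\pi_w^\Ga)=1-g$ as well, so $\CN(\pi-\pi_w^\Ga)=0$, i.e. $\pi-\pi_w^\Ga\in\CC_\Ga$ is a circulation by Proposition \ref{pr:t_flow}(a). Now $A=\mu(w)\cdot\left(\begin{smallmatrix} 1 & g^{-1}(\pi-\pi_w^\Ga) \\ 0 & 1\end{smallmatrix}\right)$ by matrix multiplication in $M(X;N)$, so it suffices to show every matrix $\left(\begin{smallmatrix} 1 & c \\ 0 & 1\end{smallmatrix}\right)$ with $c\in\CC_\Ga$ a circulation lies in $\mu(F)$ — and in fact in $\mu(N)$, which simultaneously settles the "if" direction of (b). This is where the real content sits: I expect the main obstacle to be showing that circulations on $\Ga$ are exactly the $\CF_\Ga$-parts that arise from $\mu(N)$. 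The natural route is to use Lemma \ref{le:pi_NN}/its first corollary: $c$ being a circulation means $\CN(c)=0$, and since $\Ga=\Cay(X;N)$ we want $c=\pi_w^\Ga$ for some $w\in N$. Given $\CN(c)=0$, by Lemma \ref{le:CN_nu} we only know $c$ could be any preimage; but $c$ is also a circulation, and by (the proof technique of) Lemma \ref{le:pi_NN} applied with $H=N$ — choosing a spanning tree of $\Ga=\Sch(X;N)$ and the associated generating set $Y$ of $N$ — any $w\in N$ realizing a given flow is adjustable: two words with the same flow differ by an element of $N'$, and one shows the map $N/N'\to\CF_\Ga$, $wN'\mapsto\pi_w^\Ga$, has image exactly the circulations. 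Concretely, every circulation is an integer combination of $\pi_y^\Ga$ for $y\in Y$ (the cycles generating $N$), each $\pi_y^\Ga=\mu$-image of $y\in N$, and products/inverses of such words have $\mu$-images with the additive circulation part, so $\left(\begin{smallmatrix} 1 & c \\ 0 & 1\end{smallmatrix}\right)\in\mu(N)$. Combining: for (a), $A=\mu(w)\cdot\mu(v)=\mu(wv)\in\mu(F)$; for (b), if additionally $g=1$ we may take $w\in N$, so $A\in\mu(N)$. I would also note that the exactness statement of Lemma \ref{le:CN_nu} guarantees $\CN(\pi)=1-g$ is even consistent (both sides lie in $\ker\epsilon$), confirming no vacuity. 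The only delicate point to write carefully is the claim "every circulation on $\Cay(X;N)$ is $\pi_w^\Ga$ for some $w\in N$," which is the Cayley-graph specialization of Lemma \ref{le:pi_NN} combined with the surjectivity-onto-flows Lemma (for any flow $\pi$ there is $w\in F(X)$ with $\pi=\pi_w^\Ga$) applied after noting a circulation is in particular a flow with $g=1$.
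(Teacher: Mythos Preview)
Your proof is correct and relies on the same ingredients the paper points to: equation \eqref{eq:elem_M} and Proposition \ref{pr:t_flow}, together with the flow-realization lemma (cited from \cite{Miasnikov_Romankov_Ushakov_Vershik:2010}) that every flow on $\Ga$ is $\pi_w^\Ga$ for some $w\in F$. The paper's proof is a single sentence invoking \eqref{eq:elem_M} and Proposition \ref{pr:t_flow}; it leaves the realization lemma implicit, but that lemma is precisely what you correctly isolate as ``the only delicate point.''

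Your route is slightly more roundabout than necessary. You lift $g$ to an arbitrary $w\in F$, factor $A=\mu(w)\cdot\left(\begin{smallmatrix}1 & g^{-1}(\pi-\pi_w^\Ga)\\0&1\end{smallmatrix}\right)$, and then argue that every circulation is $\pi_v^\Ga$ for some $v\in N$. The direct argument (which the paper's one-liner intends, and which you yourself note at the very end) is simply: $\CN(\pi)=1-g$ means $\pi$ is a flow by Proposition \ref{pr:t_flow}(b); the realization lemma gives $w\in F$ with $\pi=\pi_w^\Ga$; since the sink of $\pi_w^\Ga$ is $\ovw$, we get $\ovw=g$, hence $A=\mu(w)$ by \eqref{eq:elem_M}. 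Part (b) is then the special case $g=1$, where the realized $w$ lies in $N$. Your factorization detour works and has the minor conceptual benefit of explicitly identifying $\mu(N)$ with the circulation subgroup, but it is not needed here.
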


\begin{proof}
Follows from (\ref{eq:elem_M}) and Proposition \ref{pr:t_flow}.
\end{proof}

For a nontrivial $g\in G$ define a map $\tau_g:\CF_\Ga\to \CF_\Ga$ by:
$$
\pi \quad\stackrel{\tau_g}{\mapsto} \quad(1-g)\pi.
$$
Denote $\Sch(X;\gp{N,g})$ by $\Delta$.
The natural projection $\varphi:\Gamma \to \Delta$ induces
an abelian group homomorphism $\rho_g:\CF_\Gamma\to\CF_\Delta$.
Properties of the functions $\tau_g$ and $\rho_g$ are very important in the study of
the conjugacy problem in $F/N'$.

\begin{lemma}\label{le:tau_im}
The sequence $\CF_\Gamma \stackrel{\tau_g}{\to} \CF_\Gamma \stackrel{\rho_g}{\to} \CF_\Delta$ is an exact sequence of abelian groups.
\end{lemma}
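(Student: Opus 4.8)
The plan is to prove exactness by slicing $\CF_\Gamma$ along the fibers of the projection $\varphi\colon\Gamma\to\Delta$ on edges and recognizing each slice as a copy of the cyclic group ring $\MZ\gp{g}$ on which $\tau_g$ and $\rho_g$ become, respectively, multiplication by $1-g$ and the augmentation map.

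First I would dispose of the easy inclusion $\im\tau_g\subseteq\ker\rho_g$. For $e'\in E(\Delta)$ the module definition of the action gives
$$
\rho_g\big((1-g)\pi\big)(e')=\sum_{\varphi(e)=e'}\pi(e)-\sum_{\varphi(e)=e'}\pi(e^g),
$$
and since $\gp{g}g^{-1}=\gp{g}$ one has $\varphi(e^g)=\varphi(e)$, so $e\mapsto e^g$ permutes the fiber $\varphi^{-1}(e')$ and the two sums cancel. The geometric fact lurking here, which also powers the other inclusion, is that the edge-fibers of $\varphi$ are \emph{exactly} the orbits of the shift action of the cyclic subgroup $\gp{g}\le G$ on $E(\Gamma)$: two edges $h_1\stackrel{x}{\to}h_1x$ and $h_2\stackrel{x}{\to}h_2x$ have equal image in $\Delta=\Sch(X;\gp{N,g})$ precisely when $\gp{g}h_1=\gp{g}h_2$, i.e. $h_2=g^kh_1$, i.e. the second edge is $e_1^{g^{-k}}$. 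Moreover the shift fixes labels, so each orbit consists of positive edges only or of negative edges only, inversion carries positive orbits bijectively onto negative ones commuting with the shift, and every shift-stabilizer is trivial ($e^{g^k}=e$ forces $g^k=1$); hence each positive orbit $\phi$ is, as a $\gp{g}$-set, a copy of the regular one, \emph{uniformly in whether $g$ has finite or infinite order}.

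For the remaining inclusion $\ker\rho_g\subseteq\im\tau_g$ I would exploit this to decompose everything. A balanced finitely supported function is the same data as an arbitrary finitely supported function on the set of positive orbits, so $\CF_\Gamma=\bigoplus_\phi A_\phi$, and correspondingly $\CF_\Delta=\bigoplus_\phi \MZ e_\phi$, where $\phi$ runs over positive $\gp{g}$-orbits in $E(\Gamma)$, $A_\phi$ is the group of finitely supported functions on $\phi$ (extended to $\phi^{-1}$ by $f(e^{-1})=-f(e)$), and $e_\phi:=\varphi(\phi)\in E^+(\Delta)$. Since $e$ and $e^g$ lie in the same orbit, $\tau_g$ preserves each $A_\phi$; since only positive edges of $\Gamma$ map to $e_\phi$, $\rho_g$ sends $A_\phi$ to the summand of $e_\phi$ by $f\mapsto\big(\sum_{e\in\phi}f(e)\big)e_\phi$. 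Thus the whole sequence is the direct sum over $\phi$ of three-term sequences $A_\phi\xrightarrow{1-g}A_\phi\xrightarrow{\ \mathrm{sum}\ }\MZ$, and under the identification $A_\phi\cong\MZ\gp{g}$ coming from the regular $\gp{g}$-set structure this becomes $\MZ\gp{g}\xrightarrow{1-g}\MZ\gp{g}\xrightarrow{\ \epsilon\ }\MZ$. Exactness of the latter — equivalently, that the augmentation ideal of a cyclic group ring equals $(1-g)\MZ\gp{g}$ — is the case of Lemma \ref{le:CN_nu} with $F$ free of rank $1$ (or directly: $g^k-1=(g-1)(g^{k-1}+\dots+1)$ and $\sum\alpha_kg^k=\sum\alpha_k(g^k-1)$ when $\sum\alpha_k=0$). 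Taking the direct sum over all $\phi$ finishes the proof.

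The computation is short, and there is no deep obstacle; the two care-points are the bookkeeping that lets one pass to positive orbits (using that every element of $\CF_\Gamma$ is balanced, and that inversion intertwines positive and negative orbits) and the need to treat $|g|<\infty$ and $|g|=\infty$ simultaneously. Both are absorbed by the single observation that all shift-stabilizers are trivial, so that every orbit is a regular $\gp{g}$-set regardless of the order of $g$, which is the one step I would be careful to state cleanly before running the reduction above.
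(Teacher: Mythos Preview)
Your argument is correct and follows essentially the same route as the paper's proof. The paper identifies the fiber of $\varphi$ over $Hh\stackrel{x_i}{\to}Hhx_i$ with the set $\{Ng^jh\stackrel{x_i}{\to}Ng^jhx_i\}$ and then asserts that a function with zero sum along each such fiber is an integral combination of the elements $(1-g)h\pi_i$; your orbit decomposition and reduction to the augmentation ideal of $\MZ\gp{g}$ is exactly the content of that step, spelled out more explicitly.
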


\begin{proof}
The image of $\tau_g$ is a subgroup of
$\CF_\Gamma$ generated by the elements $(1-g)h\pi_i$ for $h\in G$ and $i=1,\ldots,n$.
Clearly, $\rho_g((1-g)h\pi_i) = 0$.

Conversely, assume that $\pi'=\rho_g(\pi) = 0$. Let $H=\gp{N,g}$.
Consider any edge $e'=Hh\stackrel{x_i}{\to}Hhx_i$ in $\Delta$. By definition of $\rho_g$
we get
$$
0=\pi'(e') = \sum_{\varphi(e)=e'} \pi(e) = \sum_{g^j} \pi(Ng^jh\stackrel{x_i}{\to}Ng^jhx_i),
$$
where $g^j$ ranges over all distinct powers of $g$. It is easy to see that such $\pi$
is an integral linear combination of the elements $(1-g)h\pi_i$ and, hence, belongs to $\im(\tau_g)$.
\end{proof}

\begin{lemma}[Cf. {{\cite[Lemma 4]{Gupta:1982}}}]\label{le:tau_ker}
$\ker(\tau_g)$ is not trivial if and only if $|g|=k<\infty$,
in which case it is an abelian subgroup of $\CF_\Ga$:
$$
\ker(\tau_g) = \gp{(1+g+\ldots+g^{k-1})h\pi_i \mid h\in G \mbox{ and }i=1,\ldots,n}.
$$
\end{lemma}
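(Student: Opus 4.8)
The plan is to analyze the kernel of the map $\tau_g\colon \pi\mapsto (1-g)\pi$ on the free $\MZ G$-module $\CF_\Ga$ with basis $\pi_1,\dots,\pi_n$. Since $\tau_g$ acts coordinatewise in this basis, it suffices to understand the kernel of multiplication by $(1-g)$ on $\MZ G$ itself: writing $\pi=\sum_i \alpha_i\pi_i$ with $\alpha_i\in\MZ G$, we have $\tau_g(\pi)=0$ iff $(1-g)\alpha_i=0$ in $\MZ G$ for each $i$. So the whole statement reduces to the claim that the left annihilator of $(1-g)$ in $\MZ G$ is nonzero exactly when $g$ has finite order $k$, and in that case equals the left ideal generated by $1+g+\dots+g^{k-1}$.

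For the finite-order direction, if $|g|=k<\infty$ then $(1-g)(1+g+\dots+g^{k-1})=1-g^k=0$, so $(1+g+\dots+g^{k-1})h\pi_i$ lies in $\ker(\tau_g)$ for every $h\in G$ and every $i$; this gives the containment ``$\supseteq$'' of the asserted description. For the reverse containment I would argue inside $\MZ G$: suppose $\alpha=\sum_{h} c_h h\in\MZ G$ satisfies $(1-g)\alpha=0$, i.e. $\alpha=g\alpha$. This says the coefficient function $h\mapsto c_h$ is constant on each right coset $\gp{g}h$ of the cyclic subgroup $\gp{g}$ — indeed $c_h=c_{g^{-1}h}$ for all $h$, so $c_h$ depends only on $\gp{g}h$. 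Since $\alpha$ has finite support, each such coset is finite, which already forces $|g|<\infty$; and writing $\alpha$ as a sum over cosets, on the coset $\gp{g}h$ the contribution is $c_h(1+g+\dots+g^{k-1})h$. Hence $\alpha$ is a $\MZ$-linear (indeed $\MZ G$-linear via the $h$'s) combination of the stated generators, proving ``$\subseteq$''. Lifting back through the basis $\pi_1,\dots,\pi_n$ gives the module-level statement.

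For the infinite-order direction, if $|g|=\infty$ then $\gp{g}$ is infinite, so the ``constant on right cosets of $\gp{g}$'' condition above, combined with finiteness of support, forces $\alpha=0$; thus $\ker(\tau_g)=0$. Equivalently, one may note $\MZ G$ is torsion-free as an abelian group and $1-g$ is a non-zero-divisor in $\MZ G$ when $g$ has infinite order (a standard fact: $\MZ\gp{g}\cong\MZ[t,t^{-1}]$ is a domain, and $\MZ G$ is free as a right $\MZ\gp{g}$-module, so a left multiplier that is a non-zero-divisor on $\MZ\gp{g}$ remains one on $\MZ G$). I would cite the analogous computation in \cite{Gupta:1982} as indicated by the ``Cf.'' in the statement. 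The only mildly delicate point — and the place I would be most careful — is the bookkeeping that turns ``$c_h$ constant on right cosets of $\gp{g}$'' together with the finite-support hypothesis into both the finiteness of $|g|$ and the explicit generating set; everything else is a routine transfer between $\MZ G$ and the free module $\CF_\Ga\cong (\MZ G)^n$.
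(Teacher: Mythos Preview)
Your proof is correct and follows essentially the same route as the paper: reduce to the condition $g\pi=\pi$ (equivalently $g^j\pi=\pi$ for all $j$), use finiteness of support to exclude the infinite-order case, and for finite order observe that the coefficients are constant on right $\gp{g}$-cosets, giving exactly the stated generators. Your explicit passage through the free-module decomposition $\CF_\Ga\cong(\MZ G)^n$ and the alternative zero-divisor argument for the infinite-order case are a bit more detailed than the paper's sketch, but the underlying idea is identical.
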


\begin{proof}
Pick any $\pi\in\CF_\Ga$ such that $(1-g)\pi=0$.
Then $g^j\pi=\pi$ for every $j\in \MZ$, which can not happen if $|g|=\infty$
since $\pi$ has finite support. Assume that $|g|=k<\infty$.
It is straightforward to check that $(1+g+\ldots+g^{k-1})h\pi_i \in \ker(\tau_g)$. On the other hand if
$g^j\pi=\pi$ for every $j\in \MZ$, then the coefficients $\alpha_{h,i}$ are constant on right
$\gp{g}$-cosets and hence are linear combinations of the generators.
\end{proof}

\begin{lemma}\label{le:circulation_1_t}
Let $\pi\in \CF_\Ga$ and $1\ne g\in G$.
If $(1-g)\pi \in \CC_\Gamma$, then there exists $\pi^\ast\in\CC_\Gamma$
satisfying $(1-g)\pi = (1-g)\pi^\ast$.
\end{lemma}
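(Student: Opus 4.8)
The plan is to reduce the statement to the two exact sequences already established, namely Lemma~\ref{le:tau_im} (the sequence $\CF_\Ga \stackrel{\tau_g}{\to} \CF_\Ga \stackrel{\rho_g}{\to}\CF_\Delta$ is exact) and Proposition~\ref{pr:t_flow}(a) ($\pi\in\CC_\Ga \Leftrightarrow \CN(\pi)=0$), together with the net-flow homomorphism $\CN:\CF_\Ga\to\MZ G$. The key observation is that $\CN$ is a $\MZ G$-module homomorphism, so $\CN((1-g)\pi) = (1-g)\CN(\pi)$. Thus the hypothesis $(1-g)\pi\in\CC_\Ga$ is equivalent to $(1-g)\CN(\pi)=0$ in $\MZ G$, i.e. $\CN(\pi)\in\ker(\tau_g')$ where $\tau_g'$ denotes multiplication by $1-g$ on $\MZ G$ itself. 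What we want to produce is $\pi^\ast\in\CF_\Ga$ with $\CN(\pi^\ast)=0$ and $(1-g)(\pi-\pi^\ast)=0$, i.e. $\pi-\pi^\ast\in\ker(\tau_g)$; such a $\pi^\ast$ then automatically lies in $\CC_\Ga$ and satisfies $(1-g)\pi=(1-g)\pi^\ast$.

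First I would split into the two cases of Lemma~\ref{le:tau_ker}. If $|g|=\infty$, then $\ker(\tau_g)$ on $\CF_\Ga$ is trivial, and by the same finite-support argument $\ker(\tau_g')$ on $\MZ G$ is trivial as well; hence $\CN(\pi)=0$ already, so $\pi\in\CC_\Ga$ and we may take $\pi^\ast=\pi$. The substantive case is $|g|=k<\infty$. Here $\CN(\pi)$ is killed by $1-g$ in $\MZ G$, so by the $\MZ G$-analogue of Lemma~\ref{le:tau_ker} (constancy of coefficients on right $\gp{g}$-cosets) we have $\CN(\pi) = (1+g+\cdots+g^{k-1})\beta$ for some $\beta\in\MZ G$. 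Now I would use the exactness of $\CF_\Ga \stackrel{\CN}{\to}\MZ G\stackrel{\epsilon}{\to}\MZ$ (Lemma~\ref{le:CN_nu}): since $\CN(\pi)$ lies in $\im(\CN)=\ker(\epsilon)$ and $\epsilon$ of a nonzero multiple of $1+g+\cdots+g^{k-1}$ forces the relevant sums to cancel, one checks that $\beta$ can be chosen with $\epsilon(\beta)=0$, hence $\beta\in\im(\CN)$, say $\beta=\CN(\pi_0)$. Set $\pi^\ast = \pi - (1+g+\cdots+g^{k-1})\pi_0$. Then $\CN(\pi^\ast) = \CN(\pi) - (1+g+\cdots+g^{k-1})\beta = 0$, so $\pi^\ast\in\CC_\Ga$; and $(1-g)\cdot(1+g+\cdots+g^{k-1})\pi_0 = (1-g^k)\pi_0 = 0$, so $(1-g)\pi^\ast=(1-g)\pi$, as desired.

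The step I expect to be the main obstacle is the middle one: verifying that when $|g|=k<\infty$ the element $\CN(\pi)\in\ker(\epsilon)$ can be written as $(1+g+\cdots+g^{k-1})\beta$ with $\beta$ also in $\ker(\epsilon)$, so that Lemma~\ref{le:CN_nu} applies to realize $\beta$ as $\CN(\pi_0)$. One has to organize $\CN(\pi)$ by right $\gp{g}$-cosets, note that within each coset the coefficient is constant (call it $\beta_C$), and then the augmentation condition $\epsilon(\CN(\pi))=0$ reads $k\sum_C \beta_C = 0$, i.e. $\sum_C\beta_C=0$, which is exactly $\epsilon(\beta)=0$ for $\beta=\sum_C\beta_C\,c_C$ with $c_C$ coset representatives. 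Everything else is a routine manipulation in $\MZ G$ and an application of results already in hand; I would keep those computations brief. An alternative, perhaps cleaner, route avoiding the coset bookkeeping is: since $(1-g)\pi$ is a circulation, write $(1-g)\pi = \tau_g(\pi)$ and apply Lemma~\ref{le:tau_im} backwards after first correcting $\pi$ by an element of $\ker(\tau_g)$ to land in $\CC_\Ga$; but tracking the circulation condition still ultimately requires the same augmentation argument, so I would present the $\CN$-based version above.
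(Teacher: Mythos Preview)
Your proposal is correct and follows essentially the same route as the paper: split on the order of $g$, use that $\CN$ is a $\MZ G$-homomorphism to get $(1-g)\CN(\pi)=0$, in the infinite-order case conclude $\CN(\pi)=0$ by finite support, and in the finite-order case write $\CN(\pi)=(1+g+\cdots+g^{k-1})\beta$, use $\epsilon(\CN(\pi))=k\,\epsilon(\beta)=0$ to get $\beta\in\ker(\epsilon)=\im(\CN)$, then subtract $(1+g+\cdots+g^{k-1})\pi_0$ from $\pi$. One minor sharpening: you need not say $\beta$ ``can be chosen'' with $\epsilon(\beta)=0$; since $\epsilon$ is a ring homomorphism and $k\ne 0$ in $\MZ$, \emph{every} such $\beta$ automatically satisfies $\epsilon(\beta)=0$, exactly as your coset computation shows.
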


\begin{proof}
If $(1-g)\pi\in\CC_\Gamma$, then
$$
\CN((1-g)\pi) =\CN(\pi)-g\CN(\pi) = 0.
$$
Therefore, $\CN(\pi) = g^j\CN(\pi)$ for every $j\in\MZ$.

{\sc Case-I.} Assume that $|g|=\infty$.
Since $\CN(\pi)$ has finite support, it must be the case that $\CN(\pi)=0$.
Thus, $\pi \in \CC_\Gamma$.

{\sc Case-II.} Assume that $|g|=k<\infty$. Then:
$$
\CN(\pi) = g\CN(\pi) = \ldots = g^{k-1}\CN(\pi),
$$
i.e., $\CN(\pi)$ is constant on right $\gp{g}$-cosets. Hence,
$$
\CN(\pi) = (1+g+\ldots+g^{k-1}) N',\quad \mbox{ for some } N' \in \MZ G.
$$
By Lemma \ref{le:CN_nu}, we have:
$$
0 = \epsilon(\CN(\pi)) = \epsilon(1+g+\ldots+g^{k-1}) \epsilon(N') = k \epsilon(N') \quad \mbox{ in }\MZ,
$$
which implies that $\epsilon(N') = 0$. Hence $N'\in\ker(\epsilon)$ and by Lemma \ref{le:CN_nu}:
$$
N' = \sum \alpha_i (1-\ovx_i) \quad \mbox{ for some } \alpha_i\in\MZ G.
$$
Put $\pi' = \alpha_1\pi_1+\ldots+\alpha_n\pi_n \in\CF_\Ga$. Notice that $N'=\CN(\pi')$ and:
$$
\CN(\pi) = (1+g+\ldots+g^{k-1})\CN(\pi').
$$
Define $\pi^\ast = \pi-(1+g+\ldots+g^{k-1})\pi'$ and observe that:
$$
\CN(\pi^\ast) = 0 \mbox{ and } (1-g)\pi=(1-g)\pi^\ast,
$$
i.e., $\pi^\ast$ is a required element.
\end{proof}

\section{Algorithmic properties of groups $F/N^{(d)}$}

In this section we study relations between algorithmic problems in groups
$\{F/N^{(d)}\}_{d=0}^\infty$, where $N^{(d)}$ denotes the $d$th derived
subgroup of $N$.

\subsection{Word problem}

Here we review the relations between the word problems
in groups $\{F/N^{(d)}\}$.

\begin{proposition}\label{pr:WP_WP}
$\WP(F/N) \Rightarrow \WP(F/N')$.
\end{proposition}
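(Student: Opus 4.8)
The plan is to reduce the word problem in $F/N'$ to the word problem in $F/N$ via the Magnus embedding of Theorem \ref{th:MagnusEmb}. Given a word $w\in F$, we have $w=1$ in $F/N'$ if and only if $\mu(w)=\iden$ in $M(X;N)$, which by formula (\ref{eq:elem_M}) happens if and only if $\ovw=1$ in $F/N$ and $\pi_w^\Ga=0$ in $\CF_\Ga$, where $\Ga=\Cay(X;N)$. By Proposition \ref{pr:MagnusEmb} the condition $\pi_w^\Ga=0$ already forces $\ovw=1$, so in fact $w=1$ in $F/N'$ if and only if $\pi_w^\Ga=0$. Thus the task is to decide, given $w$, whether the flow $\pi_w^\Ga$ vanishes, using an oracle (or algorithm) for $\WP(F/N)$.

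The key point is that $\pi_w^\Ga$ has finite support: it is supported on the edges traversed by the path $p_w$, of which there are at most $|w|$. Concretely, write $w=x_{i_1}^{\e_1}\cdots x_{i_k}^{\e_k}$ and let $w_j$ denote the prefix of length $j$. The path $p_w$ visits the vertices $N, w_1 N, \ldots, w_k N$ of $\Ga$, and the edge it traverses at step $j$ is $w_{j-1}N \stackrel{x_{i_j}}{\to} w_j N$ (traversed forward if $\e_j=+1$, backward if $\e_j=-1$). The flow value $\pi_w^\Ga(e)$ on a positive edge $e = gN\stackrel{x_i}{\to}gx_iN$ is the signed count of steps $j$ with $\{i_j,\e_j\}$ matching $e$ and $w_{j-1}N=gN$ (resp. $gx_iN$). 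Two such steps $j,j'$ contribute to the same edge exactly when $i_j=i_{j'}$, $\e_j=\e_{j'}$, and $w_{j-1}=w_{j'-1}$ in $F/N$ — and the latter is decided by applying the $\WP(F/N)$ algorithm to the word $w_{j-1}^{-1}w_{j'-1}$. So we compute all $O(k^2)$ such pairwise equalities, thereby partitioning the $k$ steps into equivalence classes of "same edge," sum the signs within each class, and check whether every class-sum is $0$. This terminates and gives a correct decision procedure for $\WP(F/N')$, establishing the reduction.

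The main obstacle is purely bookkeeping rather than conceptual: one must be careful that grouping steps by the equivalence "$i_j=i_{j'}$, $\e_j=\e_{j'}$, $w_{j-1}\equiv w_{j'-1}$ in $F/N$" genuinely corresponds to the steps traversing the same undirected edge of $\Ga$ (accounting for the $e$ versus $e^{-1}$ identification built into the balanced-function convention (F1)), and that summing the $\pm1$ contributions reproduces $\pi_w^\Ga$ exactly as defined in Section \ref{se:flows}. Since the paper only claims the reduction $\WP(F/N)\Rightarrow\WP(F/N')$ here (and not yet a polynomial bound), correctness and termination suffice; the refinement to polynomial time — using a smarter representation of cosets so that the vertices $w_jN$ can be compared in quasi-linear total time rather than $O(k^2)$ oracle calls — can be deferred to the later complexity section.
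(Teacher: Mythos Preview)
Your proposal is correct and follows essentially the same approach as the paper: reduce to checking $\pi_w^\Ga=0$, then compute the flow by tracing the path $p_w$ and using $\WP(F/N)$ to decide when two prefixes $w_{j-1},w_{j'-1}$ name the same vertex of $\Ga$. The only cosmetic difference is that the paper invokes Lemma~\ref{le:pi_NN} directly for the criterion $w\in N'\Leftrightarrow\pi_w^\Ga=0$, whereas you route through the Magnus embedding (Theorem~\ref{th:MagnusEmb} and Proposition~\ref{pr:MagnusEmb}); since the latter is itself derived from Lemma~\ref{le:pi_NN}, the two are equivalent.
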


\begin{proof}
Let $\Ga=\Cay(X;N)$.
By Lemma \ref{le:pi_NN},
$w = x_{i_1}^{\varepsilon_1} \ldots x_{i_k}^{\varepsilon_k}$
represents the identity in $F/N'$ if and only if $\pi_w^\Ga = 0$.
To describe the function $\pi_w^\Ga$ one needs to distinguish edges in $\Ga$
traversed by $w$:
$$
\varepsilon \stackrel{x_{i_1}^{\varepsilon_1}}{\to} w_1 \stackrel{x_{i_2}^{\varepsilon_2}}{\to} w_2 \stackrel{x_{i_2}^{\varepsilon_2}}{\to}  \ldots
\stackrel{x_{i_k}^{\varepsilon_k}}{\to}
x_{i_1}^{\varepsilon_1} \ldots x_{i_k}^{\varepsilon_k},
$$
which can be done because by assumption $\WP(F/N)$ is decidable.
\end{proof}

\begin{proposition}\label{pr:shortest identity}
Let $w \in F \setminus \{\varepsilon\}$.
If $w=1$ in $F/N^{(d)}$, then $|w|\ge 3^d$.
\end{proposition}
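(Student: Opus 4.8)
The plan is to prove the bound $|w| \ge 3^d$ by induction on $d$, using Lemma~\ref{le:trivial_flow_length} as the base engine for a single derived step, and iterating it along the chain $N \ge N' \ge N'' \ge \ldots \ge N^{(d)}$. The key observation is that Lemma~\ref{le:trivial_flow_length} already packages exactly what is needed: if $\Delta$ is a rooted folded inverse $X$-digraph whose shortest cycle has length $m$, and a reduced nontrivial word $w$ traces in $\Delta$ with $\pi_w^\Delta = 0$, then $|w| \ge 3m$. Combined with Lemma~\ref{le:pi_NN} (which says $\pi_w^\Delta = 0$ in $\Sch(X;H)$ iff $w \in [H,H]$), this gives the single-step statement: the shortest nonempty element of $[H,H]$ has length at least $3$ times the shortest nonempty element of $H$ --- which is precisely the second corollary to Lemma~\ref{le:pi_NN} already recorded in the excerpt.

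First I would set $m_d$ to be the length of a shortest nonempty word lying in $N^{(d)}$ (as a subset of $F$, not modulo anything), and observe that the base case $d=0$ is the trivial bound $m_0 \ge 1 = 3^0$, since $N^{(0)} = N$ contains some nonempty reduced word (we may assume $N \ne \{1\}$; if $N = \{1\}$ the statement is vacuous because $F/N^{(d)} = F$ is free and has no nonempty relators, so ``$w = 1$ in $F/N^{(d)}$'' never occurs for reduced $w$). For the inductive step, apply the corollary to Lemma~\ref{le:pi_NN} with $H = N^{(d-1)}$: its shortest nonempty word has length $m_{d-1}$, and $[H,H] = N^{(d)}$, so $m_d \ge 3 m_{d-1}$. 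By induction $m_{d-1} \ge 3^{d-1}$, hence $m_d \ge 3^d$. Finally, if $w \in F \setminus \{\varepsilon\}$ satisfies $w = 1$ in $F/N^{(d)}$, then the reduced form of $w$ is a nonempty word in $N^{(d)}$ of length at most $|w|$, so $|w| \ge m_d \ge 3^d$.

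I do not anticipate a serious obstacle, since all the heavy lifting is done by Lemma~\ref{le:trivial_flow_length} and Lemma~\ref{le:pi_NN}. The one point requiring a little care is the reduction to reduced words: a word $w$ with $w=1$ in $F/N^{(d)}$ need not itself be reduced, but its free reduction $\tilde w$ satisfies $|\tilde w| \le |w|$ and $\tilde w = w$ in $F$, hence $\tilde w \in N^{(d)}$; and $\tilde w \ne \varepsilon$ because $N^{(d)}$ contains no shorter witness once $N \ne \{1\}$ --- or more simply, if $\tilde w = \varepsilon$ then $w = 1$ already in $F$, and then the claim ``$|w| \ge 3^d$'' should be read as applying only when $w$ is not freely trivial, which is the intended reading of ``$w \in F \setminus \{\varepsilon\}$'' together with ``$w = 1$ in $F/N^{(d)}$'' being the nontrivial hypothesis. (The cleanest statement is: a shortest nonempty reduced word representing the identity of $F/N^{(d)}$ has length at least $3^d$, and this is what the induction delivers.) The other minor check is that the corollary to Lemma~\ref{le:pi_NN} is stated for arbitrary $H \le F$, so it applies verbatim to $H = N^{(d-1)}$ with no normality or finite-generation hypothesis needed.
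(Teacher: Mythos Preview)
Your proposal is correct and follows essentially the same approach as the paper: the paper's proof is the one-line ``Easy induction on $d$ using Lemma~\ref{le:trivial_flow_length},'' and your argument is precisely that induction spelled out, routed through the corollary to Lemma~\ref{le:pi_NN} (which is itself just Lemmas~\ref{le:pi_NN} and~\ref{le:trivial_flow_length} combined). Your extra care with the edge cases ($N=\{1\}$, non-reduced $w$) is fine but not a different method.
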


\begin{proof}
Easy induction on $d$ using Lemma \ref{le:trivial_flow_length}.
\end{proof}

\begin{corollary}
For any $N\unlhd F$ the sequence of groups $F/N^{(d)}$ converges to $F$ in Gromov-Hausdorff topology.
\qed
\end{corollary}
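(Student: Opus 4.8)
The plan is to translate the phrase ``converges to $F$ in the Gromov--Hausdorff topology'' into a concrete statement about balls in Cayley graphs and then read it off from Proposition \ref{pr:shortest identity}. Recall that, for finitely generated groups carrying the fixed marking $X$, Gromov--Hausdorff convergence of the pointed Cayley graphs $(\Cay(X;N^{(d)}),1)$ to $(\Cay(X;\{1\}),1)$ is equivalent (these spaces being uniformly locally finite with integer metrics) to the following: for every radius $R$ there is a threshold $D=D(R)$ such that for all $d\ge D$ the ball of radius $R$ in $\Cay(X;N^{(d)})$ is isomorphic, as a rooted $X$-labeled graph, to the ball of radius $R$ in $\Cay(X;\{1\})$, the $2n$-regular tree. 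Since $\Cay(X;\{1\})$ is exactly the Cayley graph of $F$ (equivalently: in the space of marked groups $(F/N^{(d)},X)\to(F,X)$), establishing this is all that is needed.

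The key step is then: fix $R\ge 1$, choose $d$ so large that $3^d>2R+1$, and show that the canonical epimorphism $\overline{\phantom{x}}\colon F\to F/N^{(d)}$ induces an isomorphism of rooted $X$-labeled graphs from the radius-$R$ ball of $\Cay(X)$ onto the radius-$R$ ball of $\Cay(X;N^{(d)})$. Surjectivity on vertices and on edges is automatic, since this is a morphism of Cayley graphs induced by a quotient map. For injectivity one takes $u,v\in F$ with $|u|,|v|\le R$ and $\overline u=\overline v$; then $uv^{-1}\in N^{(d)}$, while the reduced word representing $uv^{-1}$ has length at most $|u|+|v|\le 2R<3^d$, so by Proposition \ref{pr:shortest identity} it cannot be a nonempty element of $N^{(d)}$, whence $u=v$ in $F$. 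The same length comparison shows labels and adjacency are reflected: an edge $\overline u\stackrel{x}{\to}\overline v$ of $\Cay(X;N^{(d)})$ with both ends in the ball forces $x^{-1}u^{-1}v\in N^{(d)}$ of reduced length at most $2R+1<3^d$, hence $v=ux$ in $F$, so that edge already occurs in the ball of $\Cay(X)$; the reverse inclusion of edges is immediate.

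Finally I would observe that the threshold above depends only on $R$ (one may take $D(R)=\lceil\log_3(2R+1)\rceil$), so the displayed condition holds for every $R$; this is precisely the statement that $(\Cay(X;N^{(d)}),1)\to(\Cay(X),1)$ in the pointed Gromov--Hausdorff topology, i.e.\ $F/N^{(d)}\to F$. I do not anticipate any genuine obstacle: the only point requiring care is making explicit which notion of ``Gromov--Hausdorff convergence of groups'' is intended (convergence of pointed Cayley graphs, equivalently convergence in the space of marked groups), and once that translation is in place the corollary is an immediate consequence of the exponential lower bound $3^d$ on the length of a shortest nontrivial relator of $F/N^{(d)}$ furnished by Proposition \ref{pr:shortest identity}.
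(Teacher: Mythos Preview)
Your proposal is correct and matches the paper's approach: the paper gives no proof at all (the corollary is marked with \qed immediately after its statement), treating it as an immediate consequence of Proposition~\ref{pr:shortest identity}, and your argument spells out precisely that deduction via convergence in the space of marked groups. The only thing to note is that you have supplied considerably more detail than the paper deems necessary, but all of it is sound.
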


\begin{theorem}\label{th:WP_WP}
If $\WP(F/N)\in\tO(T(n))$, then $\WP(F/N') \in\tO(T(n)n^2)$.
Furthermore, $\WP(F/N^{(d)})\in\tO(T(n)n^2)$ for every $d\in\MN$.
\end{theorem}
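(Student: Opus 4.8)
The plan is to make the argument in Proposition \ref{pr:WP_WP} quantitative, and then to iterate it. Fix $\Ga = \Cay(X;N)$ and a reduced input word $w$ of length $n$. By Lemma \ref{le:pi_NN}, $w = 1$ in $F/N'$ iff $\pi_w^\Ga = 0$, so the algorithm for $\WP(F/N')$ must decide whether the flow $\pi_w^\Ga$ vanishes. The flow is supported on the at most $n$ edges traversed by the path $p_w$, so it suffices to (i) identify which edges of $\Ga$ these are, i.e.\ detect all coincidences among the prefixes $w_0 = \varepsilon, w_1, \ldots, w_n$ of $w$ as elements of $G = F/N$, and then (ii) for each edge, sum up the $\pm1$ contributions and check they all cancel. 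Step (ii) is clearly $\tO(n)$ once step (i) is done. For step (i), the key point is that $w_i = w_j$ in $G$ iff $w_i w_j^{-1} \in N$, a word of length at most $2n$, which the oracle for $\WP(F/N)$ decides in time $\tO(T(2n)) = \tO(T(n))$ (here I am using that $T$ is, as is standard for such bounds, at worst polynomial, so $T(2n) = O(T(n))$ up to constants; if one wants to be careful one writes $\tO(T(O(n)))$). Naively one compares all $\binom{n+1}{2}$ pairs, giving $\tO(T(n) n^2)$, which already proves the first assertion. One can instead sort the prefixes using the oracle as a (consistent) equality test together with a total order refining it — but to keep the bound clean I would just use the quadratic pairwise comparison, since that is exactly the stated $\tO(T(n)n^2)$.

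For the second assertion one iterates, writing $F/N^{(d)} = F / (N^{(d-1)})'$ and applying the first part with $N$ replaced by $N^{(d-1)}$. This gives $\WP(F/N^{(d)}) \in \tO(S(n) n^2)$ where $S$ is the time bound for $\WP(F/N^{(d-1)})$, which would pile up a factor $n^{2(d-1)}$ and make the bound depend on $d$ — so a straight induction is not enough, and this is the main obstacle. The way around it is Proposition \ref{pr:shortest identity}: a nontrivial word representing $1$ in $F/N^{(d)}$ has length at least $3^d$. Consequently, when deciding $\WP(F/N^{(d)})$ on an input $w$ of length $n$, if $3^d > n$ we may immediately answer ``$w \ne 1$'' (unless $w$ is the empty word), in time $O(n)$ plus the $O(\log d)$ needed to compare $3^d$ with $n$. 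Thus only finitely many values of $d$ — those with $3^d \le n$, i.e.\ $d \le \log_3 n$ — actually require the recursive flow computation.

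So the refined analysis runs as follows. Let $D(n) = \lfloor \log_3 n\rfloor$. For $d \le D(n)$, peeling off one level of the construction via step (i)–(ii) above costs $\tO(T(n) n^2)$ of oracle-and-bookkeeping work \emph{at that level}, and reduces the question to $\WP(F/N^{(d-1)})$ on prefix-difference words of length $O(n)$. Unwinding all $D(n)$ levels, the total time is $D(n)$ times $\tO(T(O(n)) n^2)$, i.e.\ $\tO(T(n) n^2 \log n) = \tO(T(n) n^2)$, since the extra $\log n$ is absorbed into the quasi-linear $\tO$ notation. For $d > D(n)$ the algorithm terminates in $O(n)$ after the length check. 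Hence $\WP(F/N^{(d)}) \in \tO(T(n) n^2)$ with a bound independent of $d$, as claimed. The one technical care needed is that all the words fed to lower-level oracles stay of length $O(n)$ — which holds because each is a difference $w_i w_j^{-1}$ of two prefixes of the current word, hence at most twice as long, and we only descend $O(\log n)$ levels, so lengths stay $n^{O(1)}$; if one is willing to accept an $n^{O(1)}$ rather than $O(n)$ blowup the argument is immediate, and with the Proposition \ref{pr:shortest identity} cutoff the logarithmic depth keeps even the careful version under control.
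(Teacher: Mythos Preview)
Your argument for the first assertion ($\WP(F/N')\in\tO(T(n)n^2)$) is correct and is exactly the paper's approach. For the second assertion you correctly invoke Proposition~\ref{pr:shortest identity} to bound the depth by $\log_3 n$, but the recursion you describe \emph{branches}: you reduce one instance of $\WP(F/N^{(d)})$ to the $\Theta(n^2)$ instances of $\WP(F/N^{(d-1)})$ given by the prefix-difference words $w_iw_j^{-1}$. Unwinding $\log_3 n$ levels of this produces $n^{\Theta(\log n)}$ leaf calls to the $\WP(F/N)$ oracle, which is super-polynomial --- the depth cutoff controls word lengths (as your final paragraph notes) but not the number of subproblems, so the claimed total ``$D(n)$ times $\tO(T(n)n^2)$'' does not follow from your setup.

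What makes the paper's bound work (this is what ``iterate the procedure'' means there) is that all $\Theta(n^2)$ subproblems at each level concern prefixes of the \emph{same} word $w$, so one proceeds bottom-up rather than top-down. First build the support graph $\Gamma_0$ of $w$ in $\Cay(X;N)$ using $n^2$ oracle calls; then, given the support graph $\Gamma_{k-1}$ of $w$ in $\Cay(X;N^{(k-1)})$ together with the path of $w$ in it, refine to $\Gamma_k$ by checking, for each pair $i,j$ identified in $\Gamma_{k-1}$, whether the partial flow $\pi_{w_i}-\pi_{w_j}$ vanishes in $\Gamma_{k-1}$. This refinement step needs no oracle calls at all. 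Hence the oracle is invoked only at level $0$ (cost $\tO(T(n)n^2)$), each of the at most $\log_3 n$ refinements costs $\tO(n^2)$ bookkeeping, and the total is $\tO(T(n)n^2)$ uniformly in $d$.
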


\begin{proof}
It requires $|w|^2$ calls of the $\WP(F/N)$ to construct the support graph for a given word $w$.
Given the support graph for $w$ it is straightforward to construct the flow for $w$ in $\Cay(F/N)$ as described in \cite[Section 2]{Ushakov:2014}.
By Proposition \ref{pr:shortest identity},
to solve $\WP(F/N^{(d)})$ one must iterate the procedure at most $\log_3|w|$ times.
\end{proof}

The converse to Proposition \ref{pr:WP_WP} also holds.
It was first proven in \cite{Anokhin:1997} using Bronstein monotonocity theorem.
Here we use Auslander-Lyndon theorem which gives the most straightforward algorithm.

\begin{theorem}
Assume that $N$ is a recursively enumerable normal subgroup of $F$ and $N'$ is recursive, then $N$ is recursive.
\end{theorem}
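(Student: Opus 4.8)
The plan is to use the Auslander--Lyndon theorem, which states that the derived subgroup $N'$ of a group $N$ determines $N$ as long as $N$ has no abelian direct factors; more precisely, for a \emph{free} group $F$ and a normal subgroup $N \unlhd F$, one has that $N$ is precisely the set of $w \in F$ such that every conjugate of $w$ (equivalently, the normal closure $\ncl_F(w)$, or even just $w$ itself together with enough companions) lies in $N'$ modulo the obstruction measured by abelianization. The cleanest formulation to invoke here is: if $N \unlhd F$ with $F$ free, then $w \in N$ if and only if $[w, f] \in N'$ for all $f \in F$ — this is because $N/N'$, being the abelianization of $N$, embeds in a way that is detected by the commutator action, and $F$ free (hence $N$ free) guarantees $N$ has trivial center-type obstructions so no nontrivial element of $N/N'$ is fixed by all of the conjugation action. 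I would state and cite this precisely from the Auslander--Lyndon paper.

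Given that algebraic characterization, the algorithm is immediate. First I would describe the semi-decision procedure for membership: to check whether $w \in N$, enumerate $F$ as $f_1, f_2, \ldots$ and, using the assumed recursiveness of $N'$, check in parallel for each $i$ whether $[w, f_i] \in N'$; if $w \in N$ this search confirms all such commutators lie in $N'$, but of course a purely positive search does not terminate when they all lie in $N'$. The key realization — and this is where the full strength of the characterization is used — is that we actually get a \emph{decision} procedure by dovetailing two searches: since $N$ is recursively enumerable, run the enumeration of $N$ looking for $w$; simultaneously, since $N'$ is recursive and the characterization says $w \notin N$ iff $[w,f] \notin N'$ for \emph{some} $f$, search over $f \in F$ for a witness $[w,f] \notin N'$. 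Exactly one of the two searches halts, so the procedure decides $w \in N$. Hence $N$ is recursive.

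The step I expect to be the main obstacle is pinning down the correct statement of the Auslander--Lyndon theorem and verifying that it applies in the generality needed — namely to an arbitrary recursively enumerable $N \unlhd F$ with $F$ free of rank $\ge 2$, with no finiteness or finite-index hypothesis on $N$. One must check that $N$ being a subgroup of a free group (hence itself free) is enough to rule out the degenerate case where some $1 \neq \bar w \in N/N'$ is conjugation-invariant; for free $N$ this holds because the conjugation action of $F/N$ on $N^{\mathrm{ab}} = N/N'$ has no nonzero global fixed points unless $N$ has a free abelian direct factor complemented appropriately, and the free group structure combined with $\operatorname{rank}(F) \ge 2$ excludes the problematic configurations. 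I would also need to double-check that the commutator test $[w,f] \in N'$ can be carried out uniformly — but that is immediate from recursiveness of $N'$ since $[w,f]$ is computed from $w$ and $f$ as a word in $F$. The remaining verifications (that the dovetailed search is correct and terminates) are routine.
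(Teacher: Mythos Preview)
Your overall strategy---invoke Auslander--Lyndon to get a witness for non-membership, then dovetail with the enumeration of $N$---is exactly the paper's. But the biconditional you state is wrong, and this breaks the algorithm. You claim that $w\in N$ if and only if $[w,f]\in N'$ for every $f\in F$. The forward implication fails: take $F$ free on $x,y$, let $N=\ncl_F(x)$, and set $w=x\in N$, $f=y$. Then $N$ is free on $\{y^n x y^{-n}:n\in\MZ\}$, and $[x,y]=x^{-1}(y^{-1}xy)$ has image $-e_0+e_{-1}\ne 0$ in $N/N'$, so $[x,y]\notin N'$. Consequently your negative search over $f\in F$ can terminate with a ``witness'' even when $w\in N$, and the dovetailed procedure is unsound. (Your own remark that $N/N'$ has no nonzero $F/N$-fixed points in fact \emph{predicts} this failure: it says that for every $w\in N\setminus N'$ there is some $f$ with $[w,f]\notin N'$.)

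The correct form of the Auslander--Lyndon statement used in the paper is: if $N$ is a nontrivial normal subgroup of a nonabelian free group $F$ (so $N$ has rank $\ge 2$), then for every $v\in F\setminus N$ there exists $w\in N$ with $[v,w]\notin N'$. Equivalently, $C_{F/N'}(N/N')=N/N'$. The trivial direction now really is trivial (a commutator of two elements of $N$ lies in $N'$), and the co-r.e.\ search must range over $N$, not over $F$: enumerate $w\in N$ (possible since $N$ is r.e.) and test $[v,w]\notin N'$. With that single change your argument goes through and matches the paper's proof; you should also dispose of the degenerate cases $N=\{1\}$ and $F$ abelian separately, as the paper does.
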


\begin{proof}
The statement is obvious for abelian $F$ or $N=\{1\}$.
Assume that $F$ is not abelian and $N$ is not trivial.
Then $N$ has rank at least $2$.
By \cite[Theorem 1]{Auslander-Lyndon:1955}, for any $v\in F\setminus N$
there exists $w\in N$ such that $[v,w]\notin N'$.
That gives a procedure for testing if $v\notin N$.
Thus, $N$ is recursive.
\end{proof}

\begin{corollary}
Assume that $N$ is a recursively enumerable normal subgroup of $F$ and $\WP(F/N^{(d)})$ is decidable for some $d\in\MN$.
Then $\WP(F/N^{(d)})$ is decidable for every $d\in\MN$.
\qed
\end{corollary}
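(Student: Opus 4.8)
The plan is to obtain the statement by a two-way sweep through the derived series of $N$: Theorem~\ref{th:WP_WP} moves us \emph{up} the series, while the (unlabelled) theorem just above moves us \emph{down} it, and together they pin down every term once one of them is recursive.

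\textbf{Preliminary observation.} First I would record that every term $N^{(k)}$ of the derived series is again a recursively enumerable normal subgroup of $F$. Normality is automatic, since derived subgroups are characteristic. Recursive enumerability is an easy induction: from an enumeration of the words representing elements of $N^{(k)}$ one enumerates all finite products of commutators $[u,v]$ with $u,v$ taken from that list, thereby enumerating $N^{(k+1)}=[N^{(k)},N^{(k)}]$.

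\textbf{Descent.} Suppose $\WP(F/N^{(d)})$ is decidable, i.e.\ $N^{(d)}$ is recursive. I apply the theorem above with $N^{(d-1)}$ in the role of ``$N$'': by the preliminary observation $N^{(d-1)}$ is recursively enumerable and normal in $F$, and its derived subgroup is $(N^{(d-1)})'=N^{(d)}$, which is recursive by hypothesis; hence that theorem yields that $N^{(d-1)}$ is recursive, i.e.\ $\WP(F/N^{(d-1)})$ is decidable. Iterating this $d$ times gives decidability of $\WP(F/N^{(k)})$ for every $0\le k\le d$, and in particular of $\WP(F/N)$.

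\textbf{Ascent and conclusion.} Finally, Theorem~\ref{th:WP_WP} (equivalently, iterating Proposition~\ref{pr:WP_WP}) shows that decidability of $\WP(F/N)$ implies decidability of $\WP(F/N^{(k)})$ for every $k\in\MN$. Combined with the descent step, this gives decidability of $\WP(F/N^{(k)})$ for all $k\in\MN$, as required. I do not expect a genuine obstacle here: all the substance already resides in Proposition~\ref{pr:WP_WP} and in the Auslander-Lyndon argument of the preceding theorem. The one place demanding care is the descent step, where one must check that the hypotheses of that theorem --- $N^{(d-1)}$ recursively enumerable and normal in $F$ --- really do hold at each level of the series; this is precisely the purpose of the preliminary observation.
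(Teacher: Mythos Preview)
Your argument is correct and is exactly the two-directional sweep the paper has in mind (the paper leaves the corollary with a bare \qed, treating it as immediate from Proposition~\ref{pr:WP_WP} and the Auslander--Lyndon theorem just above). Your preliminary observation that each $N^{(k)}$ remains recursively enumerable and normal is the one point the paper suppresses; making it explicit is the right instinct, since the descent step genuinely needs it at every level.
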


\subsection{Power problem}

In this section we use properties of Magnus embedding to prove that
the group $F/N'$ is torsion free and to solve the power problem in $F/N'$.
Let $\Ga=\Cay(X;N)$.

\begin{lemma}\label{le:norm_growth}
For every $w\notin N'$ and $k\in\MN$ we have $\|\pi_{w^k}^\Ga\|\ge k$.
\end{lemma}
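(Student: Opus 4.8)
The plan is to push everything forward to the Schreier graph $\Delta=\Sch(X;\gp{N,w})$, on which translation by $\ovw$ is invisible, so that the factor $1+\ovw+\dots+\ovw^{k-1}$ governing $\pi_{w^k}$ collapses to plain multiplication by $k$.

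First I would extract from the Magnus embedding the exact shape of the flow of a power: iterating (\ref{eq:elem_M}) (equivalently, multiplying the matrix in (\ref{eq:MagnusEmb}) by itself) gives, with $g:=\ovw\in G$,
$$
\pi_{w^k}^\Ga=(1+g+g^2+\dots+g^{k-1})\,\pi_w^\Ga .
$$
If $g=1$ then $w\in N\setminus N'$, the right-hand side equals $k\,\pi_w^\Ga$, and $\pi_w^\Ga\ne0$ by Lemma~\ref{le:pi_NN} (here $\Ga=\Cay(X;N)=\Sch(X;N)$), so $\|\pi_{w^k}^\Ga\|=k\,\|\pi_w^\Ga\|\ge k$. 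Hence I may assume $g\ne1$, and then $w\notin N$.

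Next I would push the displayed identity forward along the covering $\varphi\colon\Ga\to\Delta$ with $\Delta=\Sch(X;\gp{N,w})$, which induces $\rho_g=\rho_\varphi\colon\CF_\Ga\to\CF_\Delta$. The step that needs care is verifying that $\varphi(e^{g})=\varphi(e)$ for every edge $e$ of $\Ga$, that is, that $e$ and $e^{g}$ lie in a common $\gp{g}$-fiber of $\varphi$; consequently $e\mapsto e^{g}$ restricts to a permutation of each fiber $\varphi^{-1}(e')$, and therefore $\rho_g(g^{j}\pi)=\rho_g(\pi)$ for all $j\ge0$ and all $\pi\in\CF_\Ga$. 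Since a morphism of rooted $X$-digraphs carries the flow of a word to the flow of the same word, $\rho_g(\pi_w^\Ga)=\pi_w^\Delta$; applying $\rho_g$ term by term to the displayed identity then gives
$$
\rho_g(\pi_{w^k}^\Ga)=\sum_{j=0}^{k-1}\rho_g(g^{j}\pi_w^\Ga)=k\,\pi_w^\Delta .
$$

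Finally I would invoke $\pi_w^\Delta\ne0$, which holds by Corollary~\ref{co:pi_Sch} since $w\notin N$; as $\pi_w^\Delta$ is a nonzero integer-valued balanced function, $\|\pi_w^\Delta\|\ge1$. Then
$$
\|\pi_{w^k}^\Ga\|\ \ge\ \|\rho_g(\pi_{w^k}^\Ga)\|\ =\ \|k\,\pi_w^\Delta\|\ =\ k\,\|\pi_w^\Delta\|\ \ge\ k ,
$$
using the norm inequality $\|\pi\|\ge\|\rho_\varphi(\pi)\|$. The only genuinely delicate point is the bookkeeping identity $\rho_g(g\pi)=\rho_g(\pi)$ (i.e.\ that $e\mapsto e^{g}$ permutes the fibers of $\varphi$); the matrix power, the behaviour of the norm, and the two citations are all routine.
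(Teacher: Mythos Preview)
Your proof is correct and follows essentially the same route as the paper: split into the cases $w\in N$ and $w\notin N$, and in the latter case pass to $\Delta=\Sch(X;\gp{N,w})$, use $\pi_w^\Delta\ne0$ (Lemma~\ref{le:pi_Sch}/Corollary~\ref{co:pi_Sch}), and invoke the norm inequality $\|\cdot\|\ge\|\rho_\varphi(\cdot)\|$. The only cosmetic difference is that the paper obtains $\pi_{w^k}^\Delta=k\,\pi_w^\Delta$ directly (since $w$ labels a closed loop at the root of $\Delta$), whereas you derive it by pushing the Magnus identity $\pi_{w^k}^\Ga=(1+g+\cdots+g^{k-1})\pi_w^\Ga$ forward along $\rho_g$ and checking that $\rho_g(g\pi)=\rho_g(\pi)$; both are fine.
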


\begin{proof}
Assume that $w\notin N'$ and consider two cases.\\
{\sc Case-I:} If $w\in N$, then $\|\pi_w^\Ga\|\ge 1$ and
$\|\pi_{w^k}^\Ga\| = \|k\pi_w^\Ga\|\ge k$ for every $k\in\MN$. \\
{\sc Case-II:} Assume that $w\notin N$ and denote $\Sch(X;\gp{N,w})$ by $\Delta$.
By Lemma \ref{le:pi_Sch},
$\|\pi_w^\Delta\|\ge 1$ and $\|\pi_{w^k}^\Delta\|=\|k\pi_{w}^\Delta\| \ge k$ for every $k\in\MN$.
The later implies that $\|\pi_{w^k}^\Ga\|\ge k$.
\end{proof}

\begin{theorem}\label{th:torsion_free}
For every $N\unlhd F$ the group $F/N'$ is torsion free.
\end{theorem}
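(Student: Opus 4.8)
The plan is to reduce everything to the norm–growth estimate of Lemma~\ref{le:norm_growth} together with the kernel description underlying the Magnus embedding (Lemma~\ref{le:pi_NN}). Morally, the point is that for $w\notin N'$ the ``$\pi$-part'' of $\mu(w^k)$ grows linearly in $k$, hence can never return to $0$, so $w^k$ can never lie in $N'=\ker(\mu)$.

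Concretely, I would argue by contradiction. Suppose $F/N'$ has a nontrivial torsion element. Unwinding the definition, this means there are a word $w\in F\setminus N'$ and an integer $k\ge 1$ with $w^k\in N'$. Since $N\unlhd F$, the Schreier graph $\Sch(X;N)$ coincides with the Cayley graph $\Ga=\Cay(X;N)$, so Lemma~\ref{le:pi_NN} applies with $H=N$ and gives: $w^k\in N'=[N,N]$ implies $\pi_{w^k}^\Ga=0$, and therefore $\|\pi_{w^k}^\Ga\|=0$. On the other hand, $w\notin N'$, so Lemma~\ref{le:norm_growth} yields $\|\pi_{w^k}^\Ga\|\ge k\ge 1$, a contradiction. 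Hence no such $w$ exists and $F/N'$ is torsion free.

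Since each step is a direct invocation of an already-established lemma, I do not anticipate any genuine obstacle; the only point meriting a line of care is the bookkeeping that turns ``torsion element of $F/N'$'' into the purely combinatorial hypothesis $w\notin N'$, $w^k\in N'$, after which the two lemmas collide immediately.
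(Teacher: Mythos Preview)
Your proof is correct and follows essentially the same approach as the paper: both arguments invoke Lemma~\ref{le:norm_growth} to conclude that for $w\notin N'$ one has $\|\pi_{w^k}^\Ga\|\ge k\ge 1$, hence $w^k\notin N'$. The paper's version is simply the contrapositive of your contradiction argument, with the appeal to Lemma~\ref{le:pi_NN} left implicit in the phrase ``i.e., $w^k\notin N'$''.
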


\begin{proof}
By Lemma \ref{le:norm_growth},
if $w\notin N'$ and $k\in\MN$, then $\|\pi_{w^k}^\Ga\|\ge k$, i.e., $w^k\notin N'$.
\end{proof}

\begin{lemma}\label{le:bounded_powers}
Let $u,v\in F$ and $u\notin N'$. If $u^k=v$ in $F/N'$, then $k\le |v|$.
\end{lemma}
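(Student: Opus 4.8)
The plan is to use the Magnus embedding to convert the equation $u^k = v$ in $F/N'$ into information about flows and net-flows on $\Gamma = \Cay(X;N)$, and then split into the two cases $u \in N$ and $u \notin N$, just as in the proof of Lemma~\ref{le:norm_growth}. Suppose $u^k = v$ in $F/N'$. Applying $\mu$ and using \eqref{eq:elem_M}, this says $\overline{u}^{\,k} = \overline{v}$ in $G = F/N$ and $k\,\pi_u^\Ga = \pi_{u^k}^\Ga = \pi_v^\Ga$ in $\CF_\Ga$. We may assume $k \ge 1$ (the case $k \le 0$ is symmetric after replacing $v$ by $v^{-1}$, and $k=0$ forces $v \in N'$, so $|v| \ge 3 > 0 \ge k$ unless $v = \varepsilon$, in which case $u^0 = \varepsilon$ is vacuous; in any event $|v| \ge k$ holds trivially when $k \le 0 \le |v|$). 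So the real content is the bound for $k \ge 1$.

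First I would handle the case $u \notin N$. Then by Corollary~\ref{co:pi_Sch} applied to $\Delta = \Sch(X;\gp{N,u})$ we have $\pi_u^\Delta \ne 0$, so $\|\pi_u^\Delta\| \ge 1$; pushing forward the identity $k\,\pi_u^\Ga = \pi_v^\Ga$ under the projection $\Gamma \to \Delta$ and using that the induced map is norm-nonincreasing gives
$$
\|\pi_v^\Ga\| \;\ge\; \|\pi_v^\Delta\| \;=\; \|k\,\pi_u^\Delta\| \;=\; k\,\|\pi_u^\Delta\| \;\ge\; k.
$$
Since $\pi_v^\Ga$ is the flow of the word $v$ traced in $\Gamma$, each edge contributes at most $|v|$ to the norm in absolute value summed over the positive edges traversed; more simply, $\|\pi_v^\Ga\| = \sum_{e \in E^+} |\pi_v^\Ga(e)| \le |v|$ because the total number of edge-traversals by the path $p_v$ is exactly $|v|$. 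Hence $k \le \|\pi_v^\Ga\| \le |v|$.

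Next, the case $u \in N$. Here $\overline{u} = 1$, so $u^k = v$ in $F/N'$ forces $\overline{v} = 1$, i.e.\ $v \in N$, and $\pi_{u^k}^\Ga = k\,\pi_u^\Ga = \pi_v^\Ga$. Since $u \notin N'$, Lemma~\ref{le:pi_NN} gives $\pi_u^\Ga \ne 0$, so $\|\pi_u^\Ga\| \ge 1$, whence $\|\pi_v^\Ga\| = k\,\|\pi_u^\Ga\| \ge k$; combined with $\|\pi_v^\Ga\| \le |v|$ as above, $k \le |v|$. The main obstacle — really the only thing needing care — is the elementary bound $\|\pi_v^\Ga\| \le |v|$ and making sure the two cases exhaust all $u \notin N'$; both are routine, so I expect the proof to be short, essentially mirroring Lemma~\ref{le:norm_growth}.

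\begin{proof}
Suppose $u^k = v$ in $F/N'$ with $u \notin N'$. Replacing $v$ by $v^{-1}$ if necessary we may assume $k \ge 0$, and if $k = 0$ then $v \in N' \setminus\{\varepsilon\}$ forces $|v| \ge 3 > k$ (and if $v = \varepsilon$ there is nothing to prove), so assume $k \ge 1$. Applying the Magnus embedding and \eqref{eq:elem_M}, the equation $u^k = v$ yields $k\,\pi_u^\Ga = \pi_v^\Ga$ in $\CF_\Ga$. Tracing $v$ in $\Ga$ shows the path $p_v$ makes exactly $|v|$ edge-traversals, so $\|\pi_v^\Ga\| = \sum_{e\in E^+}|\pi_v^\Ga(e)| \le |v|$.

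If $u \notin N$, set $\Delta = \Sch(X;\gp{N,u})$; by Corollary~\ref{co:pi_Sch}, $\pi_u^\Delta \ne 0$, so $\|\pi_u^\Delta\| \ge 1$. The projection $\Ga \to \Delta$ induces a norm-nonincreasing homomorphism $\CF_\Ga \to \CF_\Delta$, so $|v| \ge \|\pi_v^\Ga\| \ge \|\pi_v^\Delta\| = \|k\,\pi_u^\Delta\| = k\,\|\pi_u^\Delta\| \ge k$.

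If $u \in N$, then by Lemma~\ref{le:pi_NN} the assumption $u \notin N'$ gives $\pi_u^\Ga \ne 0$, so $\|\pi_u^\Ga\| \ge 1$, and $|v| \ge \|\pi_v^\Ga\| = \|k\,\pi_u^\Ga\| = k\,\|\pi_u^\Ga\| \ge k$. In either case $k \le |v|$.
\end{proof}
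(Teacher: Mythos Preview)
Your overall strategy is the same as the paper's --- the paper just quotes Lemma~\ref{le:norm_growth} to get $\|\pi_{u^k}^\Ga\|\ge k$ and combines it with $\|\pi_v^\Ga\|\le|v|$, whereas you reprove the content of Lemma~\ref{le:norm_growth} inline via the same case split. That would be fine, but there is a genuine error in how you set things up.

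The sentence ``Applying the Magnus embedding and \eqref{eq:elem_M}, the equation $u^k = v$ yields $k\,\pi_u^\Ga = \pi_v^\Ga$ in $\CF_\Ga$'' is false in general. What \eqref{eq:elem_M} actually gives is $\pi_{u^k}^\Ga = \pi_v^\Ga$, and
\[
\pi_{u^k}^\Ga \;=\; \bigl(1+\ovu+\ovu^{\,2}+\cdots+\ovu^{\,k-1}\bigr)\,\pi_u^\Ga,
\]
which equals $k\,\pi_u^\Ga$ only when $\ovu=1$, i.e.\ when $u\in N$. In the case $u\notin N$ you then ``push forward'' the false identity $k\,\pi_u^\Ga=\pi_v^\Ga$ to $\Delta$; the conclusion $\pi_v^\Delta = k\,\pi_u^\Delta$ happens to be true, but not for the reason you give. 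The correct reason is that in $\Delta=\Sch(X;\gp{N,u})$ the word $u$ traces a closed loop at the root, so $\pi_{u^k}^\Delta = k\,\pi_u^\Delta$ directly; combined with $\pi_v^\Delta=\rho(\pi_v^\Ga)=\rho(\pi_{u^k}^\Ga)=\pi_{u^k}^\Delta$ you get what you need. With this one-line fix your argument goes through and matches the paper's. (Alternatively, just invoke Lemma~\ref{le:norm_growth} and the whole case split disappears.)
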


\begin{proof}
If $|v|<k$, then $\|\pi_v\|<k\le \|\pi_{u^k}\|$, which means that
$u^k\ne v$ in $F/N'$.
\end{proof}

\begin{theorem}\label{th:Magnus_PP}
If $\WP(F/N)$ is decidable, then $\PP(F/N')$ is decidable.
Furthermore, if $\WP(F/N)\in\tO(T(n))$, then $\PP(F/N^{(d)})\in\tO(T(L^2)L)$ for every $d\in\MN$, where $L=|u|+|v|$
is the size of the input.
\end{theorem}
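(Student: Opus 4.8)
The plan is to turn the power problem in $F/N'$ into a \emph{bounded} search that only calls the word problem in $F/N'$, and then to lift this --- both as a reduction and as a complexity bound --- to the groups $F/N^{(d)}$. For decidability, let $u,v\in F$ be the input. Proposition \ref{pr:WP_WP} converts decidability of $\WP(F/N)$ into decidability of $\WP(F/N')$, so membership in $N'$ is testable. Test first whether $u\in N'$; if so, then $u^k=1$ in $F/N'$ for all $k$, so $v=u^k$ is solvable iff $v\in N'$, which is testable. If $u\notin N'$, I claim every solution $k$ satisfies $|k|\le|v|$: for $k\ge 0$ this is exactly Lemma \ref{le:bounded_powers}, and for $k<0$ we get $u^{-k}=v^{-1}$ in $F/N'$ with $-k>0$, while $u^{-1}\notin N'$ (as $N'$ is a subgroup), so Lemma \ref{le:bounded_powers} applied with $v^{-1}$ in place of $v$ gives $-k\le|v^{-1}|=|v|$. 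Hence it suffices to test, for each $k\in\{-|v|,\ldots,|v|\}$, whether $u^{-k}v\in N'$; the power problem has answer ``yes'' iff one of these $2|v|+1$ tests does. This shows $\WP(F/N)$ decidable implies $\PP(F/N')$ decidable.

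To pass to $F/N^{(d)}$ for $d\ge 1$, write $N^{(d)}=(N^{(d-1)})'$. Here $N^{(d-1)}$ is characteristic in $N$ and $N\unlhd F$, hence $N^{(d-1)}\unlhd F$, and it is recursively enumerable whenever $N$ is; iterating Proposition \ref{pr:WP_WP} from $\WP(F/N)$ gives decidability of $\WP(F/N^{(d-1)})$, and Lemma \ref{le:bounded_powers} holds verbatim for $F/(N^{(d-1)})'=F/N^{(d)}$. So the argument above, run with $N^{(d-1)}$ in place of $N$, gives decidability of $\PP(F/N^{(d)})$ for every $d\ge 1$.

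For the complexity claim, assume $\WP(F/N)\in\tO(T(n))$ and put $L=|u|+|v|$. The bounded search inspects $O(L)$ exponents plus two auxiliary membership tests, and the words it feeds to the word problem in $F/N^{(d)}$ are $u$, $v$ and $u^{-k}v$, all of length $O(L^2)$; running each through the algorithm of Theorem \ref{th:WP_WP} (which yields $\WP(F/N^{(d)})\in\tO(T(n)n^2)$ uniformly in $d$) already gives a polynomial-time procedure. To reach the sharp bound $\tO(T(L^2)L)$ I would not treat the word problem as a black box: following \cite{Ushakov:2014}, compute the flow $\pi_u$ on $\Cay(X;N^{(d-1)})$ once and recover $\pi_{u^k}$ for all relevant $k$ from $\pi_{u^k}=(1+\ovu+\ldots+\ovu^{k-1})\pi_u$, storing the powers of $\ovu$ and the associated flows in compressed form, so that each of the $O(L)$ candidate checks collapses to a single $\WP(F/N)$-call on an input of length $O(L^2)$, i.e. costs $\tO(T(L^2))$. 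Finally, by Proposition \ref{pr:shortest identity} a word of length at most $m$ that is trivial in some $F/N^{(d)}$ is already trivial in $F/N^{(\lceil\log_3 m\rceil)}$, so the recursion over the derived series of $N$ needs only $O(\log L)$ levels and the bound becomes independent of $d$. The hard part will be this last step: proving that the $O(L)$ exponent checks genuinely share the cost of one flow computation and that the compressed bookkeeping stays within budget --- this is what separates the claimed $\tO(T(L^2)L)$ from the naive $\tO(T(L^2)L^{5})$, and is where essentially all the technical work lies; everything before it is routine given the lemmas already available.
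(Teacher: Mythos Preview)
Your decidability argument is the paper's: invoke Lemma~\ref{le:bounded_powers} to bound $|k|\le|v|$, then test each of the $2|v|+1$ candidates $v^{-1}u^k$ using $\WP(F/N')$ (decidable by Proposition~\ref{pr:WP_WP}); the passage to $F/N^{(d)}$ by replacing $N$ with $N^{(d-1)}$ is likewise the intended one. You are a bit more careful than the paper in separating out the case $u\in N'$ and in justifying the bound for negative $k$, but that is routine detail the paper simply omits.

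On complexity you actually go further than the paper. The paper's proof records only the reduction --- $O(L)$ word-problem calls in $F/N'$ on inputs of length $O(L^2)$ --- and stops there; it does not compose this with the $\tO(T(n)n^2)$ cost per call from Theorem~\ref{th:WP_WP}, so taken at face value the argument yields only $\tO(T(L^2)L^5)$, exactly as you observe. Your plan to amortise across the $O(L)$ candidates by computing $\pi_u$ once and exploiting $\pi_{u^k}=(1+\ovu+\cdots+\ovu^{k-1})\pi_u$, together with the depth cut-off $d=O(\log L)$ from Proposition~\ref{pr:shortest identity}, is a plausible route to the stated bound and is not something the paper supplies. Your caveat that this step is where the real technical work lies is accurate: neither you nor the paper actually carries it out.
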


\begin{proof}
By Lemma \ref{le:bounded_powers}, given $u,v\in F$ it is sufficient to check if $v=u^k$ in $F/N'$
for $k=-|v|,\ldots,|v|$ which reduces to $2|v|+1 = O(L)$
calls of the word problem in $F/N'$
for words $v^{-1} u^{-|v|},\ldots,v^{-1} u^{|v|}$ whose
lengths are bounded by $|v|+|u|\cdot |v| = O(L^2)$.
\end{proof}

\subsection{Conjugacy problem}

Matthews proved in \cite[Theorem B]{Matthews:1966} that:
$$
\CPR(M(X;N)) \Leftrightarrow
\left\{
\begin{array}{l}
\CPR(F/N),\\
\PP(F/N).
\end{array}
\right.
$$
The main result of this section states that restricting the conjugacy problem
from $M(X;N)$ to $F/N'$ gives a problem
equivalent to $\PP(F/N)$. In general, decidability of $\CPR(F/N)$ is irrelevant
to decidability of $\CPR(F/N')$.

The theorem below was first proved by Remeslennikov and Sokolov in
\cite{Remeslennikov-Sokolov:1970} for a torsion free group $F/N$
and by C. Gupta in \cite{Gupta:1982} for any finitely generated group $F/N$.

\begin{theorem}\label{le:RS_conj}
For any $u,v\in F$ the matrices
$$
\mu(u) =
\left(
\begin{array}{cc}
\ovu & \pi_u^\Ga\\
0 & 1
\end{array}
\right)
\mbox{ and }
\mu(v) =
\left(
\begin{array}{cc}
\ovv & \pi_v^\Ga\\
0 & 1
\end{array}
\right)
$$
are conjugate in $M(X;N)$ if and only if they are conjugate in $\mu(F)$.
\end{theorem}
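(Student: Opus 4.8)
The plan is to prove the nontrivial direction: if $\mu(u)$ and $\mu(v)$ are conjugate in $M(X;N) = \MZ^n \rwr G$, then they are already conjugate inside $\mu(F) \cong F/N'$. So suppose $A\mu(u)A^{-1} = \mu(v)$ for some $A = \left(\begin{smallmatrix} g & \sigma \\ 0 & 1\end{smallmatrix}\right) \in M(X;N)$ with $g\in G$ and $\sigma\in\CF_\Ga$. Writing out the matrix product and comparing entries gives two conditions: first, $g\ovu g^{-1} = \ovv$ in $G$ (so $\ovu$ and $\ovv$ are conjugate in $G$), and second, an equation in $\CF_\Ga$ of the shape
$$
g\,\pi_u^\Ga + (1-\ovv)\sigma = \pi_v^\Ga,
$$
after using $A\mu(u)A^{-1}=\mu(v)$ and simplifying (the precise bookkeeping with the $\ovu,\ovv$ twists is the routine part). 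The goal is then to replace $A$ by an element $A^\ast\in\mu(F)$ that still conjugates $\mu(u)$ to $\mu(v)$; by Proposition 4.1 (the Remeslennikov--Sokolov criterion), $A^\ast = \left(\begin{smallmatrix} g^\ast & \sigma^\ast \\ 0 & 1\end{smallmatrix}\right)$ lies in $\mu(F)$ precisely when $\CN(\sigma^\ast) = 1 - g^\ast$, so the task is to adjust $g$ and $\sigma$ to meet this constraint without disturbing the two conjugacy conditions.

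First I would handle the group-level part: since $\ovu\sim\ovv$ in $G$, fix any $g_0\in F$ whose image conjugates $\ovu$ to $\ovv$; then $A$ and $\mu(g_0)$ differ (on the left) by something centralizing $\mu(u)$ up to the module part, and replacing $A$ by $\mu(g_0)^{-1}A$ reduces us to the case $\ovu = \ovv =: h$ and $g$ centralizes $h$ in $G$. Now the remaining obstruction is purely module-theoretic: we have $\sigma\in\CF_\Ga$ with $(1-h)\sigma = \pi_v^\Ga - g\pi_u^\Ga =: \delta$, and we must find $g^\ast\in G$ (in the centralizer of $h$) and $\sigma^\ast$ with $(1-h)\sigma^\ast = \delta$ and $\CN(\sigma^\ast) = 1 - g^\ast$. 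Applying $\CN$ to the equation $(1-h)\sigma=\delta$ and using that $\CN$ is a $\MZ G$-homomorphism shows $(1-h)\CN(\sigma) = \CN(\delta)$, and since $\mu(u),\mu(v)\in\mu(F)$ we know $\CN(\pi_u^\Ga) = 1-\ovu$ and $\CN(\pi_v^\Ga) = 1-\ovv$ by Proposition 3.? (the $\CN(\pi)=1-g$ criterion), so $\CN(\delta) = (1-\ovv) - g(1-\ovu) = (1-h) - g(1-h) = (1-h)(1-g)$ using $\ovu=\ovv=h$. Hence $(1-h)\bigl(\CN(\sigma) - (1-g)\bigr) = 0$, i.e. $\CN(\sigma) - (1-g) \in \ker(\tau_h)$ on $\MZ G$.

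The heart of the argument is to absorb this discrepancy into $\sigma$. If $h$ has infinite order, $\ker\tau_h$ on the finitely-supported module $\MZ G$ is trivial (same argument as in Lemma 4.5: $\tau_h$-kernel elements are $\langle h\rangle$-invariant, forcing them to vanish), so $\CN(\sigma) = 1-g$ already and $A$ itself lies in $\mu(F)$ with $g^\ast = g$, $\sigma^\ast=\sigma$ --- done. If $h$ has finite order $k$, then $\CN(\sigma) - (1-g) = (1+h+\cdots+h^{k-1})\beta$ for some $\beta\in\MZ G$; here I would invoke Lemma 4.7 (the circulation-correction lemma) in the same spirit: one produces $\pi'\in\CF_\Ga$ with $\CN(\pi') = \beta$-type data, then sets $\sigma^\ast = \sigma - (1+h+\cdots+h^{k-1})\pi'$, which by Lemma 4.5 satisfies $(1-h)\sigma^\ast = (1-h)\sigma = \delta$ (the correction term lies in $\ker\tau_h$ on $\CF_\Ga$) and by construction has $\CN(\sigma^\ast) = 1-g$. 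Then $A^\ast = \left(\begin{smallmatrix} g & \sigma^\ast \\ 0 & 1\end{smallmatrix}\right)\in\mu(F)$ and a direct check shows $A^\ast\mu(u)(A^\ast)^{-1} = \mu(v)$ since the module equation is unchanged.

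I expect the main obstacle to be the finite-order-$h$ case: one must be careful that the element $\beta\in\MZ G$ measuring the failure of $\CN(\sigma)$ to equal $1-g$ can actually be realized as the net-flow of some $\pi'\in\CF_\Ga$ that is a legitimate $\MZ G$-combination of the $\pi_i$, which is where the exact sequence $\CF_\Ga \xrightarrow{\CN} \MZ G \xrightarrow{\epsilon} \MZ$ of Lemma \ref{le:CN_nu} enters --- we need $\epsilon$ of the relevant quantity to vanish, and this should follow by applying $\epsilon$ to the finite-order relation and using that $k\cdot\epsilon(\,\cdot\,) = 0$ in $\MZ$ forces $\epsilon(\,\cdot\,)=0$, exactly as in Case-II of Lemma \ref{le:circulation_1_t}. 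Essentially this theorem is the "conjugacy" analogue of that lemma, and the proof should run parallel to it, with the extra conjugator $g$ tracked alongside.
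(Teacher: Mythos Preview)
Your argument is correct and follows the same underlying idea as the paper's proof, but you make more work for yourself than necessary. The paper carries the reduction one step further: after arranging $\ovu=\ovv$ via pre-conjugation by $\mu(c)$ (for any lift $c$ of the $G$-conjugator), it observes that the remaining conjugator in $M(X;N)$ can be taken with \emph{trivial} upper-left entry, i.e.\ $g=1$. With $g=1$ the difference $\pi_u^\Ga-\pi_v^\Ga$ is automatically a circulation, and Lemma~\ref{le:circulation_1_t} applies verbatim to produce a circulation $\pi^\ast$ with $(1-\ovu)\pi^\ast=(1-\ovu)\pi$; the matrix $\left(\begin{smallmatrix}1&\pi^\ast\\0&1\end{smallmatrix}\right)$ then lies in $\mu(F)$ because $\CN(\pi^\ast)=0=1-1$. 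The paper also disposes of the case $\ovu=\ovv=1$ separately and trivially (any $\mu(c)$ with $\ovc=g$ works, since the module part of the conjugator is irrelevant there). By contrast you keep a general $g$ in the centralizer of $h$ and re-derive the content of Lemma~\ref{le:circulation_1_t} inline with the extra $1-g$ term tracked throughout; this works, but it duplicates that lemma's proof rather than invoking it. One small sloppiness: when you ``replace $A$ by $\mu(g_0)^{-1}A$'' you are also replacing $v$ by $g_0^{-1}vg_0$, which you should say explicitly.
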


\begin{proof}
Assume that for some $c\in F$ and $\pi\in \CF_\Ga$ the matrix
\begin{equation}\label{eq:conjug_M}
w=
\left(
\begin{array}{cc}
\ovc & \pi\\
0 & 1
\end{array}
\right)
\in M
\end{equation}
conjugates $\mu(u)$ into $\mu(v)$.
Then $\ovc^{-1}\ovu\ovc=\ovv$.

{\sc Case-I.} If $\ovu=\ovv=1$ in $F/N$, then it is easy to check that the matrix
$$
\left(
\begin{array}{cc}
\ovc & \pi_c^\Ga\\
0 & 1
\end{array}
\right)
\in \mu(F)
$$
is a conjugator for $\mu(u)$ and $\mu(v)$ as well.

{\sc Case-II.} Assume that $\ovu,\ovv\ne1$ in $F/N$.
Conjugating $\mu(u)$ by $\mu(c)$ we obtain an equivalent instance of
the conjugacy problem with the conjugator (\ref{eq:conjug_M}) having trivial
upper-left entry. Hence, from the beginning we may assume that the $\ovc=1$
and $\ovu=\ovv$. That gives the equality:
$$
\left(
\begin{array}{cc}
1 & -\pi\\
0 & 1
\end{array}
\right)
\left(
\begin{array}{cc}
\ovu & \pi_u^\Ga\\
0 & 1
\end{array}
\right)
\left(
\begin{array}{cc}
1 & \pi\\
0 & 1
\end{array}
\right)
=
\left(
\begin{array}{cc}
\ovv & \pi_v^\Ga\\
0 & 1
\end{array}
\right)
$$
which implies that $\pi_u^\Ga-\pi_v^\Ga=(1-\ovu)\pi$. Since $\pi_u^\Ga-\pi_v^\Ga\in \CC_\Gamma$,
by Lemma \ref{le:circulation_1_t}, there exists a circulation $\pi^\ast$ satisfying
$\pi_u^\Ga-\pi_v^\Ga=(1-\ovu)\pi^\ast$. The matrix
$$
\left(
\begin{array}{cc}
1 & \pi^\ast\\
0 & 1
\end{array}
\right)
\in \mu(F)
$$
is a required conjugator for $\mu(u)$ and $\mu(v)$.
\end{proof}

\begin{theorem}[Geometry of conjugacy problem]\label{th:Geometric_Magnus_conjugacy}
Let $N\unlhd F$, $u,v\in F$, and $\Delta=\Sch(X,\gp{N,u})$. Then
$u\sim v$ in $F/N'$ if and only if there exists $c\in F$ satisfying the conditions:
\begin{itemize}
\item[(a)]
$\pi_u^\Delta=\ovc\pi_v^\Delta$, i.e., $\pi_u$ can be obtained
by a $\ovc$-shift of $\pi_v$ in $\Delta$;
\item[(b)]
$\ovc^{-1} \ovu \ovc=\ovv$ in $F/N$.
\end{itemize}
\end{theorem}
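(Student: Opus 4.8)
The plan is to reduce the conjugacy relation $u\sim v$ in $F/N'$ to a statement about matrices over $M(X;N)$ and then push that statement down to the Schreier graph $\Delta=\Sch(X;\gp{N,u})$ via the projection $\varphi:\Ga\to\Delta$ and the induced homomorphism $\rho_u:\CF_\Ga\to\CF_\Delta$. First I would observe that, by Theorem \ref{th:MagnusEmb}, $u\sim v$ in $F/N'$ is equivalent to $\mu(u)\sim\mu(v)$ in $\mu(F)$, and by Theorem \ref{le:RS_conj} this is in turn equivalent to $\mu(u)\sim\mu(v)$ in the full wreath product $M(X;N)$. So it suffices to characterize when there is a matrix
$$
\left(\begin{array}{cc} \ovc & \pi\\ 0 & 1\end{array}\right)\in M(X;N),\qquad \ovc\in G,\ \pi\in\CF_\Ga,
$$
conjugating $\mu(u)$ to $\mu(v)$. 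Carrying out the matrix multiplication (as in the proof of Theorem \ref{le:RS_conj}) this holds if and only if $\ovc^{-1}\ovu\ovc=\ovv$ in $G=F/N$ and
$$
\pi_u^\Ga-\ovc\,\pi_v^\Ga=(1-\ovu)\pi \quad\text{for some }\pi\in\CF_\Ga .
$$
Condition (b) of the theorem is exactly the first equality (noting $\ovc$ is the image of some $c\in F$ since $G=F/N$).

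The remaining work is to show that, given (b), the existence of $\pi\in\CF_\Ga$ with $\pi_u^\Ga-\ovc\,\pi_v^\Ga=(1-\ovu)\pi$ is equivalent to condition (a), namely $\rho_u(\pi_u^\Ga)=\ovc\,\rho_u(\pi_v^\Ga)$ in $\CF_\Delta$, i.e. $\pi_u^\Delta=\ovc\,\pi_v^\Delta$. Here I would invoke Lemma \ref{le:tau_im}: the sequence $\CF_\Ga\stackrel{\tau_u}{\to}\CF_\Ga\stackrel{\rho_u}{\to}\CF_\Delta$ is exact, so $\im(\tau_u)=\ker(\rho_u)$, where $\tau_u(\pi)=(1-\ovu)\pi$. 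Thus $\pi_u^\Ga-\ovc\,\pi_v^\Ga\in\im(\tau_u)$ if and only if $\rho_u(\pi_u^\Ga-\ovc\,\pi_v^\Ga)=0$, i.e. $\rho_u(\pi_u^\Ga)=\rho_u(\ovc\,\pi_v^\Ga)=\ovc\,\rho_u(\pi_v^\Ga)$ (using that $\rho_u$ is a $\MZ G$-module map, or at least commutes with the $\ovc$-shift, which should be checked since $\varphi$ is $G$-equivariant in the relevant sense). Under the identification $\pi_u^\Delta=\rho_u(\pi_u^\Ga)$ and $\pi_v^\Delta=\rho_u(\pi_v^\Ga)$ — valid because $\varphi$ maps the path $p_u$ in $\Ga$ to the path tracing $u$ in $\Delta$, and likewise for $v$ — this is precisely condition (a).

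One technical point I would be careful about is the passage from a conjugator in $M(X;N)$ with upper-left entry $\ovc$ to one where we may assume the entry is literally some $c\in F$ with $\pi_c^\Ga$ in the off-diagonal slot: this is the reduction already performed in Cases I–II of the proof of Theorem \ref{le:RS_conj}, and I would cite it rather than repeat it. The main obstacle, such as it is, is bookkeeping: making sure the $\ovc$-shift in $\CF_\Ga$ is compatible under $\rho_u$ with the $\ovc$-shift in $\CF_\Delta$ — equivalently, that the projection $\varphi:\Ga\to\Sch(X;\gp{N,u})$ intertwines the $G$-action on $\Ga$ with the (partial) action of the image of $\ovc$ on $\Delta$ — and that when $\ovu$ has finite order the non-triviality of $\ker(\tau_u)$ (Lemma \ref{le:tau_ker}) does not obstruct the argument; it does not, since exactness of $\CF_\Ga\stackrel{\tau_u}{\to}\CF_\Ga\stackrel{\rho_u}{\to}\CF_\Delta$ is a statement about $\im(\tau_u)$, not $\ker(\tau_u)$. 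Everything else is a direct translation between the matrix computation and the graph-theoretic picture.
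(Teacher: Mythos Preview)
Your proposal is correct and follows essentially the same route as the paper: reduce conjugacy in $F/N'$ to conjugacy in $M(X;N)$ via Theorem~\ref{le:RS_conj}, unpack the matrix equation into the system $\ovc^{-1}\ovu\ovc=\ovv$ and $\pi_u^\Ga-\ovc\pi_v^\Ga=(1-\ovu)\pi$, and then apply the exactness in Lemma~\ref{le:tau_im} to replace the second condition by $\pi_u^\Delta=\ovc\pi_v^\Delta$. Your extra caution about the compatibility of $\rho_u$ with the $\ovc$-shift and about $\ker(\tau_u)$ is harmless but unnecessary---the paper simply writes $\pi_u^\Delta-\ovc\pi_v^\Delta=0$ directly, interpreting $\ovc\pi_v^\Delta$ as the flow of $v$ traced in $\Delta$ starting at the vertex $\gp{N,u}c$, which is automatically $\rho_u(\ovc\pi_v^\Ga)$.
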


\begin{proof}
By Theorem \ref{le:RS_conj}, $u\sim v$ in $F/N'$ if and only if $\mu(u)\sim\mu(v)$ in $M(X;N)$.
The conjugacy equation for $\mu(u)$ and $\mu(v)$ is:
$$
\left(
\begin{array}{cc}
\ovc^{-1} & -\ovc^{-1}\pi\\
0 & 1
\end{array}
\right)
\left(
\begin{array}{cc}
\ovu & \pi_u^\Ga\\
0 & 1
\end{array}
\right)
\left(
\begin{array}{cc}
\ovc & \pi\\
0 & 1
\end{array}
\right)
=
\left(
\begin{array}{cc}
\ovv & \pi_v^\Ga\\
0 & 1
\end{array}
\right)
$$
for some $c\in F$ and $\pi\in \CF_\Ga$, which is equivalent to the system:
\begin{equation}\label{eq:Magnus_conj_eq}
\left\{
\begin{array}{l}
\ovc^{-1}\ovu\ovc=\ovv,\\
\pi_u^\Ga-\ovc\pi_v^\Ga = (1-\ovu)\pi.
\end{array}
\right.
\end{equation}
By Lemma \ref{le:tau_im}, the equality $\pi_u^\Ga-\ovc\pi_v^\Ga = (1-\ovu)\pi$ holds
if and only if
$\pi_u^\Delta-\ovc\pi_v^\Delta = 0$.
\end{proof}

\begin{theorem}\label{th:Magnus_CP}
If $\PP(F/N)$ is decidable, then $\CPR(F/N')$ is decidable.
Furthermore, if $\PP(F/N)\in\tO(T(n))$, then $\CPR(F/N^{(d)})\in\tO(T(L^2)L^4)$ for every $d\in\MN$, where $L=|u|+|v|$
is the size of the input.
\end{theorem}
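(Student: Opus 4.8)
The plan is to make the geometric criterion of Theorem~\ref{th:Geometric_Magnus_conjugacy} effective, applying it with $N$ replaced by $N^{(d-1)}$: deciding whether $u\sim v$ in $F/N^{(d)}$ amounts to searching for a conjugator $c\in F$ satisfying conditions (a) and (b) of that theorem. The crucial observation will be that condition (a) confines $\ovc$ to only $O(|u|+|v|)$ explicitly describable cosets of $\gp{\ovu}$, so that---in contrast with Matthews' description of $\CPR(M(X;N))$, which consumes a conjugacy oracle for $F/N$---here only $\PP(F/N^{(d-1)})$ and $\WP(F/N^{(d-1)})$ are ever invoked.

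First I would dispose of degenerate inputs. Decidability of $\PP(F/N^{(d-1)})$ yields decidability of $\WP(F/N^{(d-1)})$ and hence, iterating Proposition~\ref{pr:WP_WP}, of $\WP(F/N^{(d)})$; using this one tests whether $u\in N^{(d)}$ or $v\in N^{(d)}$, in which case $u\sim v$ in $F/N^{(d)}$ reduces to one word-problem query. So assume $\pi_u^\Ga\neq 0\neq\pi_v^\Ga$ for $\Ga=\Cay(X;N^{(d-1)})$; put $\Delta=\Sch(X;\gp{N^{(d-1)},u})$ and $L=|u|+|v|$. By Theorem~\ref{th:Geometric_Magnus_conjugacy}, $u\sim v$ in $F/N^{(d)}$ iff there is $c\in F$ with $\ovc^{-1}\ovu\ovc=\ovv$ in $F/N^{(d-1)}$ and $\pi_u^\Delta=\rho_{\ovu}(\ovc\pi_v^\Ga)$ in $\CF_\Delta$. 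Here $\pi_u^\Delta$ is supported on the finite subgraph $S_u$ traced by $u$ in $\Delta$ (at most $|u|$ positive edges), and $\pi_u^\Ga\neq 0$ forces $\pi_u^\Delta\neq 0$ (Corollary~\ref{co:pi_Sch} applied in $F/N^{(d-1)}$ if $\ovu\neq 1$; trivially if $\ovu=1$, since then $\Delta=\Ga$ and $\rho_{\ovu}$ is the identity), so I may fix a positive edge $e^\ast\in S_u$ with $\pi_u^\Delta(e^\ast)\neq 0$. If a conjugator $c$ exists, then $\rho_{\ovu}(\ovc\pi_v^\Ga)(e^\ast)=\pi_u^\Delta(e^\ast)\neq 0$, so the $\ovc$-translate of some positive edge $e_0$ traced by $v$ in $\Ga$ projects in $\Delta$ onto $e^\ast$; equating the origins of these two edges forces $\ovc\in\gp{\ovu}\cdot\overline{h}$ for an explicit word $h$ of length $O(L)$ read off from a prefix of $u$ (given by $e^\ast$) and the inverse of a prefix of $v$ (given by $e_0$). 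As $e_0$ runs over the $\le|v|$ edges of $v$'s path this yields only $O(L)$ candidate words $h$, and the residual ambiguity is harmless: replacing $\overline{h}$ by $\ovu^{j}\overline{h}$ changes neither $\rho_{\ovu}(\ovc\pi_v^\Ga)$, since $(\ovu^{j}-1)\ovc\pi_v^\Ga\in\im(\tau_{\ovu})=\ker(\rho_{\ovu})$ by Lemma~\ref{le:tau_im}, nor $\ovc^{-1}\ovu\ovc$, since $\ovu^{j}$ commutes with $\ovu$.

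It now suffices, for each of the $O(L)$ candidates $h$, to decide two things: the equality $\overline{h}^{-1}\ovu\overline{h}=\ovv$ in $F/N^{(d-1)}$, which is one $\WP(F/N^{(d-1)})$-query on a word of length $O(L)$; and the equality $\pi_u^\Delta=\rho_{\ovu}(\overline{h}\pi_v^\Ga)$, which is checked by tracing $u$ from the root of $\Delta$ and $v$ from the vertex $\gp{N^{(d-1)},u}h$ of $\Delta$ and comparing the resulting flows---the only nontrivial step being to decide coincidences of cosets $\gp{N^{(d-1)},u}w=\gp{N^{(d-1)},u}w'$, i.e.\ $\overline{ww'^{-1}}\in\gp{\ovu}$, which is a $\PP(F/N^{(d-1)})$-query on the pair $(u,ww'^{-1})$ with $w,w'$ of length $O(L)$. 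This establishes decidability; for $d=1$ it is the first assertion. For the complexity bound, note that $\WP(F/N)$ reduces in linear time to $\PP(F/N)$, so $\WP(F/N)\in\tO(T(n))$; Theorem~\ref{th:WP_WP} then gives $\WP(F/N^{(d-1)})\in\tO(T(n)n^2)$ and Theorem~\ref{th:Magnus_PP} gives $\PP(F/N^{(d-1)})\in\tO(T(L^2)L)$, both uniform in $d$ (for $d=1$ one uses $\PP(F/N)\in\tO(T(n))$ directly). Tracing $u$ in $\Delta$ costs $O(L^2)$ coincidence tests; each of the $O(L)$ candidates costs $O(L^2)$ further coincidence tests, one word-problem query, and $\tO(L)$ flow arithmetic. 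Hence the dominant cost is $O(L^3)$ queries to $\PP(F/N^{(d-1)})$ on inputs of length $O(L)$, i.e.\ $O(L^3)\cdot\tO(T(L^2)L)=\tO(T(L^2)L^4)$, independent of $d$.

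I expect the main obstacle to be precisely the bookkeeping of the previous two paragraphs: verifying that matching the single edge $e^\ast$ genuinely pins $\ovc$ down to one of $O(L)$ cosets of $\gp{\ovu}$---so that $\CPR(F/N^{(d-1)})$ is never consulted and the candidate set stays polynomial---and arranging the assembly of $\Delta$ and the tracing of $v$ in it (from the various base vertices $\gp{N^{(d-1)},u}h$) so that every call to $F/N^{(d-1)}$ is on a word of length $O(L)$, never one whose length grows with $d$; this size control is exactly what keeps the final exponent independent of $d$. The remaining cases, where $\gp{\ovu}$ is trivial or finite (the latter possible only when $d=1$, since $F/N^{(d-1)}$ is torsion free for $d\ge 2$), are handled by the same procedure.
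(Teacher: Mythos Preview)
Your proposal is correct and follows essentially the same route as the paper's own proof: reduce via Theorem~\ref{th:Geometric_Magnus_conjugacy}, observe that $\pi_u^\Delta\neq 0$ (so Matthews' Case~(2) never arises and $\CPR(F/N)$ is not needed), fix a nonzero edge $e^\ast$ of $\pi_u^\Delta$, and match it against the $\le|v|$ edges of $v$'s path to obtain the $O(L)$ candidate cosets $\gp{\ovu}\,c_{i,j}$ with $c_{i,j}=u_iv_j^{-1}$, each of which is then tested by $O(L^2)$ calls to $\PP(F/N^{(d-1)})$ on inputs of size $O(L)$. Your write-up is in fact somewhat more explicit than the paper's in two places---the justification via Lemma~\ref{le:tau_im} that the $\gp{\ovu}$-ambiguity in $\ovc$ is harmless, and the observation that for $d\ge 2$ the quotient $F/N^{(d-1)}$ is torsion free---but these are elaborations of the same argument, not a different one.
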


\begin{proof}
We may assume that $u\ne 1$ and $v\ne 1$ in $F/N'$.
If $u=1$ in $F/N$, then $\pi_u^\Delta = \pi_u^\Ga \ne 0$.
If $u\ne 1$ in $F/N$, then by Corollary \ref{co:pi_Sch}, we get the same:
$$\pi_u^\Delta\ne 0.$$
It shows that Case (2) in the proof of \cite[Theorem B]{Matthews:1966}
is impossible in $F/N'$ and allows to drop decidability of $\CPR(F/N)$.
The rest of the proof is essentially the same as that of
Case (3) in the proof of \cite[Theorem B]{Matthews:1966}.

Let $u=x_1\ldots x_s\in F$ and $v=y_1\ldots y_t\in F$, where $x_i,y_j\in X^\pm$.
For $i=1,\ldots,s$ and $j=1,\ldots,t$ define words:
$$
u_i = x_1\ldots x_i, \quad v_j = y_1\ldots y_j, \quad \mbox{and }
c_{i,j} = u_i v_j^{-1}.
$$
Fix $i$ such that $\pi_u^\Delta(u_{i-1} \to u_i) \ne 0$.
The set of solutions of the equation $\pi_u=\ovc\pi_v$ is a finite (perhaps trivial)
union of some cosets $\gp{u}c_{i,j}$ in $F/N$.
Since $\PP(F/N)$ is decidable we can directly check the equalities
$\pi_u^\Delta=\ovc_{i,j}\pi_v^\Delta$
and $\ovc_{i,j}^{-1}\ovu\ovc_{i,j}=\ovv$.
Hence, $\CPR(F/N')$ is decidable.

For every $c_{i,j}$ the described algorithm
for $\CPR(F/N')$ makes $O(L^2)$ calls of the subroutine for $\PP(F/N)$
with instances $(u,c_{i,j}v_ku_l)$ of size $O(L)$.
Hence overall complexity of testing $\pi_u=\ovc\pi_v$
is $\tO(T(L^2)L \cdot L^2\cdot L$. Thus the claimed complexity bound.
\end{proof}

Our next goal is to prove the converse of Theorem \ref{th:Magnus_CP}.

\begin{proposition}\label{pr:CP_PP}
Let $N\unlhd F$ and $u,v \in F\setminus N$ satisfy $[u,v]=1$ in $F/N$.
Then $v\in\gp{u}$ in $F/N$ if and only if $u\sim u[w,v]$ in $F/N'$ for every $w\in\gp{N,u}$.
\end{proposition}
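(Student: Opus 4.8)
The plan is to run everything through the geometric criterion, Theorem~\ref{th:Geometric_Magnus_conjugacy}. Write $G=F/N$, $H=\gp{N,u}$, $\Ga=\Cay(X;N)$ and $\Delta=\Sch(X;H)$. Since $\ovv$ centralises $\ovu$ it centralises $\gp{\ovu}=H/N$, and every $w\in H$ satisfies $\ovw\in\gp{\ovu}$; hence $[w,v]\in N$, so $\overline{u[w,v]}=\ovu$ and $\Sch(X;\gp{N,u[w,v]})=\Delta$ for all $w\in H$. From $\pi_{u[w,v]}^\Ga=\pi_u^\Ga+\ovu\,\pi_{[w,v]}^\Ga$, the identity $\ker\rho_{\ovu}=(1-\ovu)\CF_\Ga$ of Lemma~\ref{le:tau_im}, and the fact that $\ovu$ acts trivially on $\CF_\Delta$ (as $\CF_\Delta=\CF_\Ga/(1-\ovu)\CF_\Ga$), one obtains $\pi_{u[w,v]}^\Delta=\pi_u^\Delta+\pi_{[w,v]}^\Delta$. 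Expanding $[w,v]=w^{-1}v^{-1}wv$ and using that $\ovw^{\pm1}$ and $\ovu^{\pm1}$ act trivially on $\CF_\Delta$ while $\ovv$ commutes with $\ovu$, a short computation gives, for $w\in H$ and $k\in\MZ$,
$$\pi_{[w,v]}^\Delta=(\ovv^{-1}-1)\pi_w^\Delta ,\qquad \pi_{[w^k,v]}^\Delta=k\,\pi_{[w,v]}^\Delta .$$
Thus Theorem~\ref{th:Geometric_Magnus_conjugacy} says: $u\sim u[w,v]$ in $F/N'$ if and only if $\pi_u^\Delta+\pi_{[w,v]}^\Delta$ lies in the orbit of $\pi_u^\Delta$ under the $\ovc$-shifts of $\Delta$, $\ovc\in C_G(\ovu)$.

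The implication ``$\Rightarrow$'' is then immediate: if $\ovv\in\gp{\ovu}$ then $v\in H$, so $[w,v]\in[H,H]$ for every $w\in H$, hence $\pi_{[w,v]}^\Delta=0$ by Lemma~\ref{le:pi_NN}, so $\pi_{u[w,v]}^\Delta=\pi_u^\Delta$ and Theorem~\ref{th:Geometric_Magnus_conjugacy} with the trivial conjugator $c=1$ yields $u\sim u[w,v]$ in $F/N'$.

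For ``$\Leftarrow$'' I argue by contraposition: assuming $\ovv\notin\gp{\ovu}$, I produce $w\in H$ with $u\not\sim u[w,v]$ in $F/N'$. The key fact is that a $\ovc$-shift is a graph automorphism of $\Delta$, hence an isometry of the norm $\|\cdot\|$ on $\CF_\Delta$; so if $u\sim u[w,v]$ then necessarily $\|\pi_{u[w,v]}^\Delta\|=\|\pi_u^\Delta\|$. Suppose we have found $w_0\in H$ with $\pi_{[w_0,v]}^\Delta\neq0$. By the second displayed identity $\pi_{u[w_0^k,v]}^\Delta=\pi_u^\Delta+k\,\pi_{[w_0,v]}^\Delta$, whose norm is at least $k\|\pi_{[w_0,v]}^\Delta\|-\|\pi_u^\Delta\|$ and therefore exceeds $\|\pi_u^\Delta\|$ once $k$ is large enough. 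For such $k$ we get $\|\pi_{u[w_0^k,v]}^\Delta\|>\|\pi_u^\Delta\|$, hence $u\not\sim u[w_0^k,v]$ in $F/N'$; and $w_0^k\in H$. This completes the reduction.

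Everything thus comes down to one step, which I expect to be the main obstacle: \emph{if $\ovv\notin\gp{\ovu}$, there exists $w_0\in H$ with $\pi_{[w_0,v]}^\Delta\neq 0$.} By Lemma~\ref{le:pi_NN} this is equivalent to $[w_0,v]\notin[H,H]$ for some $w_0\in H$, i.e.\ (note $v$ normalises $H$, because $\ovv$ normalises $H/N=\gp{\ovu}$ and $N\unlhd F$) to the conjugation action of $v$ on $H/[H,H]$ being nontrivial. I would prove this by contradiction: if $v$ acted trivially on $H/[H,H]$, then, using the commutator identity $[vh,h']=[v,h']^{h}[h,h']$ together with $N\le H$, the subgroup $K=\gp{N,u,v}=H\gp{v}$ would satisfy $[K,K]=[H,H]$, so $K/[H,H]$ would be abelian; plugging this back into $\pi_{[w,v]}^\Delta=(\ovv^{-1}-1)\pi_w^\Delta$ forces the $\ovv^{-1}$-shift of $\Delta$ to fix $\pi_w^\Delta$ for every $w\in H$, i.e.\ to fix every circulation of $\Delta$. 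Examining the path traced by $v$ in $\Delta$ — which leaves the root since $v\notin H$ — and comparing $\Delta$ with $\Sch(X;K)$ via Corollary~\ref{co:pi_Sch}, one then forces this path to return to the root, i.e.\ $\ovv\in\gp{\ovu}$, the desired contradiction. Making this last step precise (in particular, producing a circulation that detects the displacement of the root) is the delicate part of the argument.
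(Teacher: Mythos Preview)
Your overall strategy matches the paper's: reduce everything to the Schreier graph $\Delta=\Sch(X;\gp{N,u})$ via Theorem~\ref{th:Geometric_Magnus_conjugacy}, and for the nontrivial direction argue by contraposition. The forward direction and the computations $\pi_{u[w,v]}^\Delta=\pi_u^\Delta+\pi_{[w,v]}^\Delta$, $\pi_{[w,v]}^\Delta=(\ovv^{-1}-1)\pi_w^\Delta$, $\pi_{[w^k,v]}^\Delta=k\,\pi_{[w,v]}^\Delta$ are all fine.

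The ``main obstacle'' you isolate --- if $v\notin H$ then some $w_0\in H$ satisfies $[w_0,v]\notin[H,H]$ --- is precisely the Auslander--Lyndon theorem \cite[Theorem~1]{Auslander-Lyndon:1955}, applied to the normal subgroup $H=\gp{N,u}$ of the free group $\gp{H,v}\le F$. The paper invokes that theorem directly at exactly this point; you should do the same. Your attempted in-house proof does not close: from ``the $\ovv^{-1}$-shift fixes every circulation on $\Delta$'' you cannot conclude that it fixes the root. A graph automorphism of a Schreier graph can easily fix all circulations while moving vertices (think of a rotation of a cycle, or more generally any automorphism acting trivially on $H_1(\Delta;\MZ)$), so the appeal to Corollary~\ref{co:pi_Sch} and ``detecting the displacement of the root by a circulation'' does not go through as sketched. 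This is the genuine gap in your write-up.

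One point where your argument adds value: the paper finishes its chain of equivalences with ``By Theorem~\ref{th:Geometric_Magnus_conjugacy} the latter holds if and only if $u\sim u[w,v]$ in $F/N'$ for every $w\in\gp{N,u}$,'' but Theorem~\ref{th:Geometric_Magnus_conjugacy} only supplies \emph{some} conjugator $c$, not $c=1$. Your norm argument with $w_0^k$ is an honest way to close this direction: since any admissible $\ovc$ centralises $\ovu$ and hence normalises $H$, the $\ovc$-shift is a genuine automorphism of $\Delta$ and preserves $\|\cdot\|$, so $\|\pi_u^\Delta+k\pi_{[w_0,v]}^\Delta\|\to\infty$ rules out conjugacy for large $k$. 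Keep that part; replace the circulation sketch by a citation of Auslander--Lyndon.
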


\begin{proof}
Let $\Delta=\Sch(X;\gp{N,u})$.
Since $[u,v]=1$ in $F/N$, we have $v^{-1}\gp{N,u}v=\gp{N,u}$, i.e., $v$ normalizes $\gp{N,u}$.
Therefore, $\gp{N,u} \unlhd \gp{N,u,v}$ and the following holds.
\begin{align*}
v\in\gp{u} \mbox{ in }F/N & \Leftrightarrow v\in\gp{N,u} \mbox{ in }F\\
& \Leftrightarrow [w,v]\in\gp{N,u}'\le N, \quad \forall w\in \gp{N,u} \quad \mbox{(by \cite[Theorem 1]{Auslander-Lyndon:1955})}\\
& \Leftrightarrow \pi_{[w,v]}^\Delta=0 \mbox{ and } [w,v]=1 \mbox{ in }F/N, \quad \forall w\in \gp{N,u}\\
& \Leftrightarrow \pi_u^\Delta=\pi_{u[w,v]}^\Delta \mbox{ and } u=u[w,v] \mbox{ in }F/N, \quad \forall w\in \gp{N,u}.
\end{align*}
By Theorem \ref{th:Geometric_Magnus_conjugacy} the later holds if and only if
$u\sim u[w,v]$ in $F/N'$ for every $w\in \gp{N,u}$.
\end{proof}

\begin{theorem}[Cf. {{\cite[Lemma 1]{Anokhin:1997}}}]\label{th:CP_PP}
Assume that $N$ is recursively enumerable. Then
$\CPR(F/N') \Rightarrow \PP(F/N)$.
\end{theorem}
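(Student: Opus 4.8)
The plan is to reduce $\PP(F/N)$ to $\CPR(F/N')$ through Proposition \ref{pr:CP_PP}, turning its universally quantified criterion into an actual decision procedure by dovetailing it with an independent positive search. First I would record a preliminary reduction: since $w=1$ in $F/N'$ iff $w\sim 1$ in $F/N'$, decidability of $\CPR(F/N')$ gives decidability of $\WP(F/N')$, i.e. $N'$ is recursive; then the theorem above (applicable because $N$ is recursively enumerable) yields that $N$ is recursive, so $\WP(F/N)$ is decidable as well. This will be used repeatedly below.

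Given an instance $(u,v)$ of $\PP(F/N)$, I would first dispose of the degenerate cases, all effectively recognizable now that $\WP(F/N)$ is decidable. If $v\in N$ then $v=u^{0}$ in $F/N$ and the answer is ``yes''. If $v\notin N$ but $u\in N$ then every power $u^{k}$ equals $1\neq v$ in $F/N$, so the answer is ``no''. If $u,v\notin N$ but $[u,v]\neq1$ in $F/N$, then $v$ cannot be a power of $u$ (any power of $u$ commutes with $u$), so the answer is ``no''. What remains is the case $u,v\in F\setminus N$ with $[u,v]=1$ in $F/N$, which is precisely the hypothesis of Proposition \ref{pr:CP_PP}.

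In that case Proposition \ref{pr:CP_PP} tells us that $v\in\gp{u}$ in $F/N$ (equivalently, $v=u^{k}$ in $F/N$ for some $k\in\MZ$) if and only if $u\sim u[w,v]$ in $F/N'$ for every $w\in\gp{N,u}$. I would then run two semidecision procedures in parallel. Procedure (1) enumerates $k=0,1,-1,2,-2,\dots$ and, using an enumeration of $N$, tests whether $u^{-k}v\in N$; if some $k$ succeeds it halts and outputs ``yes, $v=u^{k}$''. Procedure (2) enumerates the elements of $\gp{N,u}$ (these are exactly the words $n u^{k}$ with $n$ ranging over an enumeration of $N$ and $k\in\MZ$, since $N\unlhd F$) and, for each such $w$, uses the oracle for $\CPR(F/N')$ to test whether $u\sim u[w,v]$ in $F/N'$; if some $w$ gives $u\not\sim u[w,v]$ it halts and outputs ``no''. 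By Proposition \ref{pr:CP_PP} exactly one of these procedures halts: if $v\in\gp{u}$ in $F/N$ then procedure (1) finds the exponent while procedure (2) runs forever, and if $v\notin\gp{u}$ in $F/N$ then procedure (1) runs forever while procedure (2) produces a witnessing $w$. Hence the combined algorithm always terminates with the correct answer, so $\PP(F/N)$ is decidable.

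The main (and essentially only nonroutine) obstacle is recognizing that the criterion of Proposition \ref{pr:CP_PP} is a statement universally quantified over the infinite set $\gp{N,u}$, so by itself it only semidecides the negative answer; the positive answer must be semidecided separately, and this is exactly what recursive enumerability of $N$ supplies. Everything else is bookkeeping — in particular checking that each degenerate case is genuinely decidable once $\WP(F/N)$ is known to be, and checking that the two searches really do cover the two possibilities disjointly.
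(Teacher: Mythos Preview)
Your proposal is correct and follows essentially the same argument as the paper: reduce to $\WP(F/N)$ via $\CPR(F/N')\Rightarrow\WP(F/N')$ and the Auslander--Lyndon based theorem, handle the degenerate cases, and in the remaining case $u,v\notin N$, $[u,v]=1$ in $F/N$ dovetail a positive search over exponents $k$ with a negative search over $w\in\gp{N,u}$ using Proposition~\ref{pr:CP_PP} and the $\CPR(F/N')$ oracle. The only cosmetic difference is that in procedure~(1) you invoke an enumeration of $N$ where the paper (and you, a paragraph earlier) already have full decidability of $\WP(F/N)$ available; either suffices.
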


\begin{proof}
By assumption $\CPR(F/N')$ is decidable. Hence
$\WP(F/N')$ is decidable and $\WP(F/N)$ is decidable.
Consider an arbitrary instance $u,v\in F$ of $\PP(F/N)$.
Our goal is to decide if $v\in\gp{u}$ in $F/N$, or not.
\begin{itemize}
\item
If $v=1$ in $F/N$, then the answer is YES.
\item
If $u=1$ in $F/N$ and $v\ne1$, then the answer is NO.
\item
If $[u,v]\ne 1$ in $F/N$, then the answer is NO.
\end{itemize}
Hence, we may assume that $u\ne 1$, $v\ne 1$, and $[u,v]=1$ in $F/N$.

To test if $v\in\gp{u}$ in $F/N$ we
run a process that checks if $v=u^k$ in $F/N$ for some $k\in\MZ$.
To test if $v\notin\gp{u}$ in $F/N$ we enumerate all words $w\in\gp{N,u}$
and solve the conjugacy problem for words $u$ and $u[w,v]$ in $F/N'$.
By Proposition \ref{pr:CP_PP}, if $v\notin\gp{u}$
then a negative instance will be found eventually.
\end{proof}


\subsection{Relations among the algorithmic problems}

Theorems \ref{th:Magnus_PP}, \ref{th:Magnus_CP}, and \ref{th:CP_PP} give the following diagram of problem reducibility
for a finitely generated recursively presented group $F/N$:
$$
\xymatrix{
\WP(F/N) \ar@{<->}[r]\ar@{<->}[rd] & \WP(F/N') \ar@{<->}[r]\ar@{<->}[rd]\ar@{<->}[d] & \WP(F/N'') \ar@{<->}[d]\ar@{<->}[r] & \ldots\\
\PP(F/N) \ar@{->}[u]\ar@{->}[r]\ar@{<->}[rd] & \PP(F/N') \ar@{<->}[r]\ar@{<->}[rd] & \PP(F/N'') \ar@{<->}[r]& \ldots\\
\CPR(F/N) \ar@/^2pc/[uu] & \CPR(F/N') \ar@{->}[u]\ar@{->}[r] & \CPR(F/N'') \ar@{<->}[u] \ar@{<->}[r] & \ldots\\
}
$$
Below we deduce a few corollaries in the spirit of \cite[Problem 12.98]{Kourovka}.

\begin{corollary}\label{co:second_step}
For every recursively enumerable $N\unlhd F$ the following holds.
\begin{itemize}
\item[(a)]
$\PP(F/N')$ is decidable if and only if $\PP(F/N'')$ is decidable.
\item[(b)]
$\CPR(F/N'')$ is decidable if and only if $\CPR(F/N''')$ is decidable.
\item[(c)]
$\WP(F/N'')$ is decidable if and only if $\PP(F/N'')$ is decidable if and only if $\CPR(F/N'')$ is decidable.
\qed
\end{itemize}
\end{corollary}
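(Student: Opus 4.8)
The plan is to obtain all three equivalences by composing reductions that are already proved, so no new machinery is required; the whole argument is bookkeeping. Three easy preliminaries set things up. First, if $N$ is recursively enumerable then so is every derived subgroup $N^{(d)}$ (enumerate $N$, then enumerate all finite products of commutators of enumerated elements to get $N'$, and iterate), so Theorems~\ref{th:Magnus_PP}, \ref{th:Magnus_CP}, \ref{th:CP_PP} and Proposition~\ref{pr:WP_WP} may be applied with $N$ replaced by $N'$ or $N''$. Second, $\WP(G)$ reduces to both $\PP(G)$ and $\CPR(G)$, since $w=1$ in $G$ iff $(x_1x_1^{-1},w)$ is a positive instance of $\PP(G)$, and iff $w$ is conjugate in $G$ to $x_1x_1^{-1}$. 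Third, combining Proposition~\ref{pr:WP_WP} with its converse (the Auslander--Lyndon argument above), decidability of the word problem for one of the groups $F/N,F/N',F/N'',\dots$ forces it for all of them.

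For part (a) the forward direction is the chain: $\PP(F/N')$ decidable $\Rightarrow \WP(F/N')$ decidable $\Rightarrow \PP(F/N'')$ decidable, the last step being Theorem~\ref{th:Magnus_PP} applied to $N'$ in place of $N$. The reverse direction is: $\PP(F/N'')$ decidable $\Rightarrow \WP(F/N'')$ decidable $\Rightarrow \WP(F/N)$ decidable $\Rightarrow \PP(F/N')$ decidable, using the $\WP$-descent preliminary for the middle step and Theorem~\ref{th:Magnus_PP} (for $N$) for the last.

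For part (b) the forward direction is: $\CPR(F/N'')$ decidable $\Rightarrow \WP(F/N'')$ decidable $\Rightarrow \WP(F/N')$ decidable $\Rightarrow \PP(F/N'')$ decidable $\Rightarrow \CPR(F/N''')$ decidable, where the last two steps are Theorem~\ref{th:Magnus_PP} applied to $N'$ and then Theorem~\ref{th:Magnus_CP} applied to $N''$. The reverse direction is: $\CPR(F/N''')$ decidable $\Rightarrow \PP(F/N'')$ decidable (Theorem~\ref{th:CP_PP} applied to $N''$, which is recursively enumerable) $\Rightarrow \PP(F/N')$ decidable (by part (a)) $\Rightarrow \CPR(F/N'')$ decidable (Theorem~\ref{th:Magnus_CP} applied to $N'$). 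Part (c) follows by closing the cycle $\WP(F/N'')\Rightarrow\PP(F/N'')\Rightarrow\CPR(F/N'')\Rightarrow\WP(F/N'')$: the first arrow is $\WP(F/N'')$ decidable $\Rightarrow\WP(F/N')$ decidable $\Rightarrow\PP(F/N'')$ decidable (Theorem~\ref{th:Magnus_PP} for $N'$); the second is $\PP(F/N'')$ decidable $\Rightarrow\PP(F/N')$ decidable (part (a)) $\Rightarrow\CPR(F/N'')$ decidable (Theorem~\ref{th:Magnus_CP} for $N'$); the third is the trivial reduction from the second preliminary.

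Since the statements concern plain decidability and not running time, there is no genuine obstacle here: one need not keep track of the polynomially many oracle calls concealed in each reduction. The single point that warrants care is checking that the recursive-enumerability hypothesis is met at each invocation of Theorems~\ref{th:Magnus_PP}, \ref{th:Magnus_CP}, \ref{th:CP_PP}, which is precisely what the first preliminary guarantees.
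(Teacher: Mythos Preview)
Your proof is correct and is exactly the approach the paper takes: the corollary is marked \qed\ immediately after its statement and is presented as a direct reading of the reducibility diagram obtained from Theorems~\ref{th:Magnus_PP}, \ref{th:Magnus_CP}, \ref{th:CP_PP} together with the $\WP$-equivalences. You have simply written out the diagram chase explicitly, including the care about recursive enumerability of $N^{(d)}$ that the paper leaves implicit.
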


Existence of a group $F/N$ with decidable $\CPR(F/N)$ and undecidable $\CPR(F/N')$
was shown by Anokhin in \cite{Anokhin:1997}
(his result is somewhat similar to Theorem \ref{th:CP_PP}, but is weaker).

\begin{theorem}\label{th:nonCP_CP}
There exists a recursive $N\unlhd F$ with undecidable $\CPR(F/N)$ and decidable $\CPR(F/N')$.
\end{theorem}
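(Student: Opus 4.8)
The plan is to exhibit $F/N$ as a Miller-type multiple HNN extension of a free group whose conjugacy problem is undecidable but whose power problem is decidable; the theorem then follows from Theorem~\ref{th:Magnus_CP}. Recall that C.~F.~Miller constructed, starting from a suitable finitely presented group with unsolvable word problem, a group
$$
G \;=\; \gpr{F_0,\,t_1,\ldots,t_m}{\,t_i^{-1}A_it_i=B_i,\ i=1,\ldots,m\,},
$$
a finite multiple HNN extension of a finitely generated free group $F_0$ with finitely generated (free) associated subgroups $A_i,B_i\le F_0$, such that $\WP(G)$ is solvable while $\CPR(G)$ is unsolvable. Let $S$ be the finite generating set of $G$ consisting of a basis of $F_0$ together with $t_1,\ldots,t_m$, so $|S|\ge 2$; put $F=F(S)$ and $N=\ker(F\twoheadrightarrow G)$. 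Since $\WP(G)$ is solvable, $N$ is recursive, and $\CPR(F/N)=\CPR(G)$ is undecidable.

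The heart of the argument is to verify that $\PP(G)$ is decidable. Because the associated subgroups $A_i,B_i$ are finitely generated subgroups of a free group, membership in them is decidable by Stallings foldings, so Britton reductions are effective: for any word $u$ one can compute a Britton-reduced form, the stable-letter syllable length $\|u\|_t$ (well defined on elements by Britton's lemma), and a cyclic decomposition $u=p\,u^{\ast}\,p^{-1}$ with $u^{\ast}$ cyclically reduced and $p$, $\|u^{\ast}\|_t$, $\|p\|_t$ all computable. Given an instance $(u,v)$ of $\PP(G)$, first compute such a decomposition of $u$. If $\|u^{\ast}\|_t=\ell\ge 1$, then, $u^{\ast}$ being cyclically reduced, no seam pinches occur in powers, so $\|(u^{\ast})^k\|_t=|k|\,\ell$ and hence $\|u^k\|_t\ge|k|\,\ell-2\|p\|_t$; thus $v=u^k$ forces $|k|\le(\|v\|_t+2\|p\|_t)/\ell$, and it suffices to test these finitely many $k$ (and $k=0$) using $\WP(G)$. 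If $\|u^{\ast}\|_t=0$, then $u^{\ast}\in F_0$; conjugating $u$ and $v$ by $p$ (which preserves the truth of ``$v=u^k$'' for each fixed $k$) we may assume $u=u^{\ast}\in F_0$. Britton-reduce the new $v$: if $\|v\|_t\ge 1$ the answer is NO, since every $u^k$ lies in $F_0$; otherwise $v$ equals a computable element of $F_0$, and since $F_0$ embeds in $G$ the question ``$\exists k:\ u^k=v$'' collapses to the power problem in the free group $F_0$, which is decidable. Hence $\PP(G)$ is decidable.

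Finally, $N$ recursive makes $\WP(F/N)$ decidable, and $\PP(F/N)=\PP(G)$ is decidable, so by Theorem~\ref{th:Magnus_CP} the problem $\CPR(F/N')$ is decidable, while $\CPR(F/N)=\CPR(G)$ is undecidable, proving the theorem. The main obstacle is the case $\|u^{\ast}\|_t=0$ of the power-problem analysis: one must check that all the conjugators and reduced forms are effectively computable (this rests entirely on decidability of membership in finitely generated subgroups of free groups) and that the remaining question genuinely reduces to the power problem in the free base $F_0$, thereby sidestepping the ``conjugacy modulo the associated subgroups'' phenomenon that is responsible for the undecidability of $\CPR(G)$.
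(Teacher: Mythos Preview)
Your proposal is correct and follows essentially the same approach as the paper: both take $F/N$ to be a Miller group $G(U)$, a multiple HNN extension of a free group with undecidable conjugacy problem, observe that Britton reductions and cyclically reduced forms are computable so that $\PP(F/N)$ is decidable, and then invoke Theorem~\ref{th:Magnus_CP} to conclude that $\CPR(F/N')$ is decidable. The paper's proof is terser, merely asserting that computability of cyclically reduced forms yields decidability of $\PP(G(U))$; your case analysis on whether the cyclically reduced core $u^{\ast}$ contains stable letters is precisely the argument that justifies that assertion.
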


\begin{proof}
C. Miller introduced the following construction in \cite{Miller1}.
Let $U$ be a group given by a finite presentation:
$$
U=\gpr{s_1,\ldots,s_n}{R_1,\ldots,R_m}.
$$
Define a new group $G(U)$ with generators $q,s_1,\ldots,s_n,t_1,\ldots,t_m,d_1,\ldots,d_m$
and relations of several types:
\begin{itemize}
\item
$t_i^{-1} qt_i=qR_i$;
\item
$t_i^{-1} s_kt_i=s_k$;
\item
$d_j^{-1} q d_j=s_j^{-1} q s_j$;
\item
$d_j^{-1} s_k d_j=s_k$;
\end{itemize}
for all $1\le i\le m$, $1\le j\le n$, and $1\le k\le n$.
The group $G(U)$ can be easily recognized as a multiple HNN-extension of a free group
on generators $\{q,s_1,\ldots,s_n\}$ with stable letters $t_1,\ldots,t_m,d_1,\ldots,d_n$.
Hence, $\WP(G(U))$ is decidable (by Britton lemma) by computing reduced forms.
Cyclically reduced forms are computable as well and hence $\PP(G(U))$ is decidable.
Miller proved in \cite{Miller1} that $\CPR(G(U))$ is decidable
if and only if $\WP(U)$ is decidable.
Thus, choosing a finitely presented group $U=\gpr{X}{R}$ with undecidable word problem
we obtain a group $G(U)$ with the required property.
\end{proof}

Examples of finitely presented groups $F/N$ with solvable
word problem and undecidable power problem exist
(can be deduced from \cite[Corollary 1]{Olshanskii_Sapir:1998}). For such groups $\CPR(F/N')$ is undecidable
and $\CPR(F/N'')$ is decidable.
This shows that, in general, both implications of \cite[Problem 12.98(b)]{Kourovka} fail.

\subsection{Some combinatorial problems for groups $F/N^{(d)}$}
\label{se:combin_problems}

In~\cite{Miasnikov-Nikolaev-Ushakov:2014a}, the authors introduce a number of certain
decision, search and optimization algorithmic problems in groups,
such as \emph{subset sum problem},
\emph{knapsack problem}, and \emph{bounded submonoid membership problem}. These problems are
collectively referred to as \emph{knapsack-type} problems and deal with different
generalizations of the classical knapsack and subset sum problems over~$\mathbb Z$
to the case of arbitrary groups. Here we consider the subset sum problem and its generalization
called \emph{acyclic graph membership problem} as defined in \cite{Frenkel-Nikolaev-Ushakov:2014}.

\medskip
\noindent{\bf $\SSP(G)$, subset sum  problem in $G$:}
Given $g_1,\ldots,g_k,g\in G$ decide if
  \begin{equation} \label{eq:SSP-def}
  g = g_1^{\varepsilon_1} \ldots g_k^{\varepsilon_k}
  \end{equation}
for some $\varepsilon_1,\ldots,\varepsilon_k \in \{0,1\}$.

\medskip
\noindent
{\bf $\AGP(G)$, acyclic graph membership problem in $G$:}
Given $g\in G$ and a finite acyclic oriented graph $\Gamma$ labeled by words in $X^\pm$, decide whether there is a directed path in $\Gamma$ labeled by a word $w$ such that $w=g$ in~$G$.

\medskip
These problems were shown to be hard in a vast class of groups.
Below we prove that they are hard in most groups of the type $F/N^{(d)}$.

\begin{theorem}\label{th:SSP_NPcomp}
If $\WP(F/N) \in\P$, $N\ne \{1\}$, and $[F:N]=\infty$, then $\SSP(F/N')$
and hence $\AGP(F/N')$ are $\NP$-complete.
\end{theorem}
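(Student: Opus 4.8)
The plan is to prove $\NP$-membership directly and $\NP$-hardness by a polynomial-time reduction from exact cover by $3$-sets ($\mathbf{X3C}$); the core of the argument is to manufacture, in polynomial time, a large free abelian subgroup of $F/N'$ presented by short words, using only $\WP(F/N)\in\P$ and $[F:N]=\infty$.

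\emph{Membership in $\NP$.} Since $\WP(F/N)\in\P$, Theorem~\ref{th:WP_WP} gives $\WP(F/N')\in\P$. A positive instance $g_1,\dots,g_k,g$ of $\SSP(F/N')$ is certified by a vector $\e\in\{0,1\}^k$, and the equality $g_1^{\e_1}\cdots g_k^{\e_k}=g$ in $F/N'$ is checked by one call to $\WP(F/N')$ on a word of length at most $|g|+\sum_i|g_i|$; hence $\SSP(F/N')\in\NP$. For $\AGP(F/N')$ a certificate is a directed path in the input graph, which is simple (the graph being acyclic) and hence of polynomial length, and is verified the same way; so $\AGP(F/N')\in\NP$ as well.

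\emph{A polynomially presented $\MZ^m$ inside $F/N'$.} Write $G=F/N$ and $\Ga=\Cay(X;N)$; since $[F:N]=\infty$, the graph $\Ga$ is infinite, connected and vertex-transitive of degree at most $2n$. Since $N\neq\{1\}$, fix once and for all a shortest nontrivial word $r\in N$ (found by finitely many calls to $\WP(F/N)$); it cannot lie in $N'$, since the shortest nontrivial element of $N'$ has length at least $3|r|$, so by Lemma~\ref{le:pi_NN} the circulation $\pi_r:=\pi_r^\Ga$ is nonzero, and its support consists of edges within distance $|r|$ of the vertex $\iden$. The key claim I would prove is that \emph{for every $m$ one can compute, in time polynomial in $m$, words $g_1,\dots,g_m\in F$ of length $O(m)$ whose images $\overline{g}_1,\dots,\overline{g}_m$ in $\Ga$ are pairwise at distance more than $2|r|$}. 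Granting this, set $z_j:=g_jrg_j^{-1}\in N$: from $\mu(z_j)=\mu(g_j)\mu(r)\mu(g_j)^{-1}$ and (\ref{eq:elem_M}) one computes that the Magnus image of $z_j$ in $F/N'$ has trivial left entry and right entry $\overline{g}_j\pi_r$, whose support is the $\overline{g}_j$-translate of that of $\pi_r$, hence consists of edges within distance $|r|$ of $\overline{g}_j$. As the $\overline{g}_j$ are more than $2|r|$ apart, these supports are pairwise disjoint, so $\overline{g}_1\pi_r,\dots,\overline{g}_m\pi_r$ are $\MZ$-linearly independent in $\CC_\Ga$; since $N/N'$ is abelian and embeds into $F/N'$ via $\mu$ onto the circulations, $z_1,\dots,z_m$ generate a free abelian subgroup of rank $m$.

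\emph{Proof of the claim, the reduction, and the obstacle.} For the claim: let $c=|B_\Ga(2|r|)|$, a constant (all balls of a fixed radius have equal size by vertex-transitivity), and let $\rho$ be least with $|B_\Ga(\rho)|\geq cm$. Degree $\leq 2n$ forces $|B_\Ga(\rho)|\leq 2n\,|B_\Ga(\rho-1)|<2ncm$, so $|B_\Ga(\rho)|=\Theta(m)$ and, since a ball of radius $\rho$ contains a geodesic of length $\rho$, also $\rho=O(m)$. Using $\WP(F/N)\in\P$, run breadth-first search in $\Ga$ from $\iden$, recording a geodesic word for each discovered vertex and merging words that name the same vertex by the word problem; this stops after exhausting $B_\Ga(\rho)$, i.e. after processing $O(m)$ vertices at polynomial cost apiece. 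Then greedily scan $B_\Ga(\rho)$, keeping a vertex only if it is more than $2|r|$ from all previously kept ones ($O(1)$ word-problem calls per test); by maximality the $2|r|$-balls about the kept vertices cover $B_\Ga(\rho)$, so at least $|B_\Ga(\rho)|/c\geq m$ are kept. For the reduction: given an $\mathbf{X3C}$ instance with universe $\{u_1,\dots,u_{3q}\}$ and $3$-sets $C_1,\dots,C_k$, take $m=3q$, build $g_1,\dots,g_{3q}$ and $z_1,\dots,z_{3q}$, and output the $\SSP(F/N')$ instance with items $h_i=\prod_{u_j\in C_i}z_j$ and target $h=z_1\cdots z_{3q}$ (all words of length $O(m^2)$). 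As the $z_j$ commute and freely generate $\MZ^{3q}$, for $\e\in\{0,1\}^k$ the product $h_1^{\e_1}\cdots h_k^{\e_k}$ equals $\sum_j\#\{i:\e_i=1,\ u_j\in C_i\}\,z_j$, which equals $h=\sum_j z_j$ exactly when every $u_j$ is covered once, i.e. when $\{C_i:\e_i=1\}$ is an exact cover; thus $\SSP(F/N')$ is $\NP$-complete, and it reduces to $\AGP(F/N')$ via the acyclic graph that offers, for $i=1,\dots,k$ in succession, a parallel pair of edges labelled $g_i$ and the empty word, so $\AGP(F/N')$ is $\NP$-complete too. The one genuinely delicate point is the polynomial-time construction of the rank-$m$ free abelian subgroup: we are handed no control on the growth of $G$, and it is precisely the doubling bound $|B_\Ga(\rho)|\leq 2n|B_\Ga(\rho-1)|$ together with the packing/covering estimate that pin a usable ball down to polynomial size; everything after that is routine wreath-product bookkeeping.
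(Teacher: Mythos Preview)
Your proof is correct and follows essentially the same approach as the paper: build a large free abelian subgroup of $F/N'$ from well-separated conjugates of a shortest relator and reduce from a standard $\NP$-complete $0$--$1$ covering problem (you use $\mathbf{X3C}$, the paper uses ZOE). You actually supply considerably more detail than the paper does---the ball-growth/packing argument showing the $g_j$ can be found in polynomial time with $|g_j|=O(m)$, and the explicit $\NP$-membership check---both of which the paper merely asserts; the only slip is notational, in your $\AGP$ reduction the parallel edges should be labelled $h_i$ (the items), not $g_i$.
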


\begin{proof}
We claim that under our assumptions the following holds.
\begin{itemize}
\item[(a)]
$N/N'$ is free abelian of infinite rank.
\item[(b)]
It requires polynomial time to find $n$ linearly independent elements in $N/N'$.
\end{itemize}
Clearly, (a) and (b) allow us to reduce zero-one equation problem (ZOE)
known to be $\NP$-complete to $\SSP(F/N')$.

Choose a shortest nontrivial relation $u$ in $F/N$.
By $B_c$ we denote the ball of radius $|u|$ in $F/N$ centered at $c$.
Put $c_1 = 1$. Choose $c_2 \in F/N \setminus B_{c_1}$ and, in general, choose:
$$
c_{n+1} \in F/N \setminus \rb{B_{c_1}\cup\ldots\cup B_{c_n}}.
$$
It is clear from the choice of $c_i$'s that the set of elements:
$$
\{c_i u c_i^{-1} \mid i=1,\ldots,n\},
$$
freely generates a free abelian group of rank $n$ in $F/N'$.
Furthermore, it requires polynomial time in $n$ to construct such a set.
\end{proof}

\begin{proposition}
If $\WP(F/N) \in\P$ and $[F:N]<\infty$, then $\SSP(F/N')$ is in $\P$.
\end{proposition}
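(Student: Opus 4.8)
The plan is to push the instance through the Magnus embedding into the wreath product $M(X;N)=\MZ^n\rwr G$, $G=F/N$, turning $\SSP(F/N')$ into a bounded reachability question that is solved by dynamic programming. Write $m=[F:N]$ and $\Ga=\Cay(X;N)$; since $N$ is fixed, $\Ga$ is a fixed finite graph with $m$ vertices and $mn$ positive edges (computable because $\WP(F/N)$ is decidable and $[F:N]<\infty$), which we regard as precomputed. Given an instance $g_1,\dots,g_k,g$ of $\SSP(F/N')$ presented by words $w_1,\dots,w_k,v\in F(X)$ of total length $L$, I would first trace each word in $\Ga$ to obtain $\ovw_i\in G$, $\pi_{w_i}^\Ga\in\CF_\Ga$, and likewise $\ovv,\pi_v^\Ga$; each tracing costs $O(|w_i|)$ with $\Ga$ as a lookup table, and by definition of the norm $\|\pi_{w_i}^\Ga\|\le|w_i|$ and $\|\pi_v^\Ga\|\le|v|$.

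Next I would record the equation to be solved. By Theorem \ref{th:MagnusEmb}, for $\e_i\in\{0,1\}$ we have $g=g_1^{\e_1}\cdots g_k^{\e_k}$ in $F/N'$ if and only if $\mu(v)=\mu(w_1)^{\e_1}\cdots\mu(w_k)^{\e_k}$ in $M(X;N)$; multiplying out the upper-triangular matrices (with $\mu(w_i)^0=\iden$) this holds if and only if
\begin{equation*}
\ovw_1^{\e_1}\cdots\ovw_k^{\e_k}=\ovv\ \text{ in }G
\qquad\text{and}\qquad
\sum_{i=1}^{k}\bigl(\ovw_1^{\e_1}\cdots\ovw_{i-1}^{\e_{i-1}}\bigr)\e_i\,\pi_{w_i}^\Ga=\pi_v^\Ga\ \text{ in }\CF_\Ga .
\end{equation*}
Thus a partial product is completely described by a state $(p,\Pi)\in G\times\CF_\Ga$ (current element of $G$, accumulated flow), and a choice $\e_i$ updates $(p,\Pi)$ to either $(p,\Pi)$ or $(p\,\ovw_i,\ \Pi+p\,\pi_{w_i}^\Ga)$. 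I would therefore compute iteratively the sets $S_0=\{(\iden,0)\}$ and $S_i=S_{i-1}\cup\{(p\,\ovw_i,\ \Pi+p\,\pi_{w_i}^\Ga)\mid(p,\Pi)\in S_{i-1}\}$, and answer \textsc{yes} precisely when $(\ovv,\pi_v^\Ga)\in S_k$; correctness is immediate from the displayed equivalence.

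The key point making this polynomial is that $|S_i|$ stays small. Multiplication by any $p\in G$ on $\CF_\Ga$ permutes the set $E^+$ of positive edges (it sends an $x$-labelled edge to an $x$-labelled edge), hence preserves $\|\cdot\|$; so every flow occurring in $S_i$ satisfies $\|\Pi\|\le\sum_{j\le i}\|\pi_{w_j}^\Ga\|\le\sum_{j\le i}|w_j|\le L$, i.e.\ $\Pi$ lies among the at most $(2L+1)^{mn}$ lattice points of $\{\pi:\|\pi\|\le L\}$, while $p$ ranges over the $m$ vertices of $\Ga$. Since $m$ and $n$ are constants attached to $N$, $|S_i|=O(L^{mn})$, and since $k\le L$ and the flow coordinates have $O(\log L)$ bits, the whole computation runs in time polynomial in $L$. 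The only genuinely load-bearing step is this norm bound $\|\Pi\|\le L$ on reachable accumulated flows together with the observation that $mn$ is a fixed constant, which is exactly what keeps the state space polynomial; I would write those two points out carefully, everything else being routine bookkeeping on the wreath-product matrices.
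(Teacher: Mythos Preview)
Your argument is correct, but it is a genuinely different route from the paper's. The paper's proof is two lines: it observes that $[F:N]<\infty$ makes $N/N'$ a finite-rank abelian normal subgroup of finite index in $F/N'$, hence $F/N'$ is virtually abelian, and then quotes \cite[Theorem~3.3]{Miasnikov-Nikolaev-Ushakov:2014a}, which says $\SSP$ is in $\P$ for every virtually abelian group. You instead unroll the Magnus embedding explicitly, translate the subset-sum equation into a pair of conditions in $G\times\CF_\Ga$, and run a dynamic program over the polynomially bounded state space $\{(p,\Pi):p\in G,\ \|\Pi\|\le L\}$. The paper's approach is shorter and conceptually clean (it identifies the right structural reason), but it outsources the actual algorithm to a black box; your approach is self-contained, stays within the machinery already developed in the paper, and makes the polynomial exponent $mn$ visible. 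Both ultimately rely on the same phenomenon---a fixed-rank lattice bounding the state space---so the difference is one of packaging rather than of underlying idea.
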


\begin{proof}
The condition $[F:N]<\infty$ implies that the group $F/N'$ is virtually abelian (of finite rank).
Then by \cite[Theorem 3.3]{Miasnikov-Nikolaev-Ushakov:2014a}, $\SSP(F/N') \in \P$.
\end{proof}

\begin{theorem}\label{th:SSP_P}
Let $N \unlhd F$.
Then $\SSP(F/N') \in \P$ if and only if $\WP(F/N) \in \P$ and
either $N=\{1\}$ or $[F:N]<\infty$.
\qed
\end{theorem}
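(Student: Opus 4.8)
The plan is to combine the two one-directional results that immediately precede this theorem, so the statement is essentially a bookkeeping corollary of Theorem~\ref{th:SSP_NPcomp} and the Proposition right before it, together with the trivial degenerate case $N=\{1\}$. The three parameters of $N$ — trivial, nontrivial of finite index, nontrivial of infinite index — partition all possibilities, and I would organize the proof around that trichotomy.

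First I would dispose of the ``if'' direction. If $N=\{1\}$, then $N'=\{1\}$ and $F/N'=F$ is free, so $\SSP(F)\in\P$ by the results of \cite{Miasnikov-Nikolaev-Ushakov:2014a} (subset sum is polynomial-time in free groups); alternatively one notes the word problem in $F$ is trivially in $\P$ and the ball-counting argument degenerates. If $N\ne\{1\}$ and $[F:N]<\infty$, then the Proposition immediately preceding the theorem gives $\SSP(F/N')\in\P$, since $F/N'$ is virtually free abelian of finite rank and \cite[Theorem 3.3]{Miasnikov-Nikolaev-Ushakov:2014a} applies. So in both cases the hypotheses ``$\WP(F/N)\in\P$ and ($N=\{1\}$ or $[F:N]<\infty$)'' force $\SSP(F/N')\in\P$. (One should also check that these hypotheses are consistent: $[F:N]<\infty$ automatically makes $F/N$ finite, hence $\WP(F/N)\in\P$ for free; when $N=\{1\}$ the word problem in $F$ is again trivially polynomial. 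So the only real content of the hypothesis on the left is the finite-index/trivial dichotomy.)

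For the ``only if'' direction I argue by contraposition against the two ways the right-hand side can fail. If $\WP(F/N)\notin\P$, then $\SSP(F/N')\notin\P$: indeed a single instance of the word problem for $w\in F/N$ reduces to the subset-sum instance asking whether $1 = w^{\varepsilon}$ with $\varepsilon\in\{0,1\}$ inside $F/N'$ — more carefully, using that $\WP(F/N)$ reduces to $\WP(F/N')$ (Proposition~\ref{pr:WP_WP}, in its polynomial-time form) and $\WP(F/N')$ is the special case $k=1$, $g=1$ of $\SSP(F/N')$. Hence $\SSP(F/N')\in\P$ would force $\WP(F/N)\in\P$. The remaining failure mode is $N\ne\{1\}$ and $[F:N]=\infty$ while $\WP(F/N)\in\P$; but that is exactly the hypothesis of Theorem~\ref{th:SSP_NPcomp}, which gives that $\SSP(F/N')$ is $\NP$-complete, hence not in $\P$ unless $\P=\NP$. (If one wants an unconditional statement one should phrase the conclusion as ``$\SSP(F/N')\notin\P$ unless $\P=\NP$''; I expect the paper intends exactly this reading, matching the hypothesis $\WP(F/N)\in\P$ throughout the subsection.)

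The main obstacle, such as it is, is purely one of careful case management and making sure the reductions in the ``only if'' direction are genuinely polynomial-time: the reduction of $\WP(F/N)$ to an $\SSP(F/N')$-instance must not blow up the input, which is fine since it is the identity-size instance $g=1$, $g_1=w$; and the $\NP$-hardness half is imported wholesale from Theorem~\ref{th:SSP_NPcomp} whose proof (the ball-avoidance construction producing $n$ independent conjugates $c_iuc_i^{-1}$ of a shortest relator, computable in polynomial time because $\WP(F/N)\in\P$) does the real work. So this theorem needs no new ideas; the proof is the observation that the three mutually exclusive cases for $N$ are covered, respectively, by the degenerate free case, by the preceding Proposition, and by Theorem~\ref{th:SSP_NPcomp}, with the contrapositive of the $\WP$-dependence handled by the trivial reduction through $\WP(F/N')$.
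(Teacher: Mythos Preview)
Your trichotomy on $N$ and the assembly of the ``if'' direction from the free case, the preceding Proposition, and Theorem~\ref{th:SSP_NPcomp} is exactly what the paper intends (it gives no proof beyond the \qed). The one substantive issue is in your ``only if'' direction, in the sub-case $\WP(F/N)\notin\P$.

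You write that $\WP(F/N)$ reduces to $\WP(F/N')$ ``(Proposition~\ref{pr:WP_WP}, in its polynomial-time form)'', but that proposition goes the other way: it shows that an oracle for $\WP(F/N)$ solves $\WP(F/N')$, i.e.\ $\WP(F/N')\le \WP(F/N)$, and Theorem~\ref{th:WP_WP} is its polynomial-time version in that same direction. What you need is the converse reduction $\WP(F/N)\le_P \WP(F/N')$. The paper only establishes the converse at the level of decidability (via Auslander--Lyndon/Anokhin), and that argument is an enumeration, not a polynomial-time procedure. Your first attempt, the $\SSP$ instance ``$1=w^{\varepsilon}$ in $F/N'$'', is vacuous since $\varepsilon=0$ always works, as you noticed; but the fallback through $\WP(F/N')$ does not close the gap either.

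In short: the case ``$\WP(F/N)\notin\P$'' is not handled by anything proved in the paper, and your citation points to the wrong direction of the $\WP$ equivalence. The honest reading of the theorem---and the one consistent with the paper marking it \qed immediately after two results that both \emph{assume} $\WP(F/N)\in\P$---is that it is asserted modulo $\P\ne\NP$ and with $\WP(F/N)\in\P$ as the ambient standing hypothesis of the subsection, so that the biconditional really distinguishes only between the finite-index/trivial case and the infinite-index nontrivial case. You already flag the $\P\ne\NP$ caveat; you should also flag that the clause ``$\WP(F/N)\in\P$'' on the right-hand side is part of the standing context rather than something derived from $\SSP(F/N')\in\P$ within the paper.
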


\begin{corollary}\label{co:AGP_P}
Let $N \unlhd F$.
Then $\AGP(F/N') \in \P$ if and only if $\WP(F/N) \in \P$ and
either $N=\{1\}$ or $[F:N]<\infty$.
\qed
\end{corollary}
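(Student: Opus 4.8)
The plan is to deduce the corollary from Theorem~\ref{th:SSP_P} together with two soft facts: that $\SSP$ is a special case of $\AGP$, and that under either of the admissible side conditions on $N$ the group $F/N'$ is of a very restricted kind --- free, or virtually abelian of finite rank --- in which $\AGP$ is already known to be polynomial. Accordingly I would prove the two implications separately.

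\emph{The ``only if'' direction.} First I would record that $\SSP(G)\le_P\AGP(G)$ for every finitely generated $G$. From an instance $g,g_1,\dots,g_k$ of $\SSP(G)$, build the acyclic graph $\Gamma$ on vertices $v_0,\dots,v_k$ with, for each $i$, two parallel edges $v_{i-1}\to v_i$: one labelled by a word representing $g_i$ and one \emph{trivial} edge, labelled by the empty word (or by $g_ig_i^{-1}$ if empty labels are not permitted). Every directed path of $\Gamma$ runs monotonically from some $v_a$ to some $v_b$, spells a word equal in $G$ to $g_{a+1}^{\delta_{a+1}}\cdots g_b^{\delta_b}$ with $\delta_i\in\{0,1\}$, and extends to a path from $v_0$ to $v_k$ spelling the same word by padding the missing positions with trivial edges; hence $(\Gamma,g)$ is a yes-instance of $\AGP(G)$ precisely when $g=g_1^{\varepsilon_1}\cdots g_k^{\varepsilon_k}$ for some $\varepsilon_i\in\{0,1\}$. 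The construction is plainly polynomial, so $\AGP(F/N')\in\P$ implies $\SSP(F/N')\in\P$, and Theorem~\ref{th:SSP_P} then gives $\WP(F/N)\in\P$ and either $N=\{1\}$ or $[F:N]<\infty$. (Failure of $\AGP(F/N')\in\P$ in the excluded case is, alternatively, already contained in Theorem~\ref{th:SSP_NPcomp}.)

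\emph{The ``if'' direction.} Assume $\WP(F/N)\in\P$ and $N=\{1\}$ or $[F:N]<\infty$, and split into the two cases. If $N=\{1\}$ then $N'=\{1\}$ and $F/N'=F$ is free; here $\AGP(F)\in\P$ amounts to the polynomial-time solvability of the rational subset membership problem in a free group, which follows from Benois' theorem: an instance of $\AGP(F)$ is a finite automaton over $X^\pm$, one saturates it by replacing every sub-path $p\stackrel{x}{\to}r\stackrel{x^{-1}}{\to}q$ by a trivial transition $p\to q$ until no new transition appears (at most $|V|^2$ transitions are ever added, each scan being polynomial), and then checks whether the reduced form of the target word is accepted. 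If instead $[F:N]<\infty$ then, by the Nielsen--Schreier theorem, $N$ is free of some finite rank $r$, so $N/N'\cong\MZ^r$ sits as a finite-index normal subgroup of $F/N'$; thus $F/N'$ is virtually abelian of finite rank, and $\AGP(F/N')\in\P$ by the polynomial-time algorithm for $\AGP$ in finitely generated virtually abelian groups (see \cite{Frenkel-Nikolaev-Ushakov:2014}, and \cite[Theorem~3.3]{Miasnikov-Nikolaev-Ushakov:2014a} for the abelian core).

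The two implications together give the corollary. There is very little hard content here: the only points that require care are checking that the padding argument above makes the reduction $\SSP\le_P\AGP$ faithful in the ``there exists a directed path'' formulation of $\AGP$ (a sub-path that spells $g$ always extends to a full path that spells $g$, so no spurious yes-instances arise), and invoking versions of the polynomial $\AGP$ algorithms that apply verbatim to free groups and to finitely generated virtually abelian groups. I do not expect any genuine obstacle beyond this bookkeeping.
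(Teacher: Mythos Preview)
Your proposal is correct and matches the paper's implicit argument: the paper gives no proof beyond the \qed, treating the corollary as an immediate consequence of Theorem~\ref{th:SSP_P}, the reduction $\SSP\le_P\AGP$ (already used in Theorem~\ref{th:SSP_NPcomp}), and the known polynomial-time solvability of $\AGP$ in free and in finitely generated virtually abelian groups. You have simply spelled out these three ingredients, including a sketch of the Benois argument for the free case; nothing in your write-up departs from what the paper intends.
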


\begin{corollary}
Let $\{1\}\lhd N \unlhd F$ and $\WP(F/N) \in\P$.
Then $\SSP(F/N^{(d)})$ is $\NP$-complete for every $d\ge 2$.
\qed
\end{corollary}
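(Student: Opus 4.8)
The plan is to bootstrap from Theorem~\ref{th:SSP_NPcomp} together with the reducibility results of Section~\ref{se:combin_problems}, and the observation that the hypotheses propagate up the derived series. First I would record the elementary fact that $N^{(d)}$ is again a normal subgroup of $F$ (being characteristic in $N$, which is normal in $F$), so all the earlier machinery applies with $N$ replaced by any $N^{(d)}$. Since we are given $\{1\}\lhd N\unlhd F$, it is immediate that $N$ is non-abelian-free-of-rank-$\ge 2$ unless $N$ is cyclic; but in fact for the induction it is cleaner to note that $N^{(1)}=N'$ is nontrivial precisely when $N$ is non-abelian, and then $N^{(d)}$ is nontrivial and of infinite index in $F$ for all $d\ge 1$ (infinite index because $N^{(d)}\le N'$ and $[F:N']=\infty$ whenever $N\ne\{1\}$, as $N/N'$ is already infinite). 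This is where I would be slightly careful: the case $d=2$ is exactly $\SSP(F/N') = \SSP(F/M')$ with $M=N$, which is handled directly once we verify the three hypotheses of Theorem~\ref{th:SSP_NPcomp} for $M=N$.

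So the main line is: to prove $\SSP(F/N^{(d)})$ is $\NP$-complete for $d\ge 2$, apply Theorem~\ref{th:SSP_NPcomp} with $N$ replaced by $M := N^{(d-2)}$ (interpreting $N^{(0)}=N$), so that $M' = N^{(d-1)}$ and $M'' = N^{(d)}$; wait — that gives $\SSP(F/M')$, not $\SSP(F/M'')$. The correct substitution is $M := N^{(d-1)}$, giving $M' = N^{(d)}$, so that $\SSP(F/N^{(d)}) = \SSP(F/M')$, and Theorem~\ref{th:SSP_NPcomp} yields $\NP$-completeness provided (i) $\WP(F/M)\in\P$, (ii) $M\ne\{1\}$, and (iii) $[F:M]=\infty$. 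For (ii) and (iii): since $N$ is nontrivial and non-abelian-or-not, $N'$ is \dots{} here is the one genuine subtlety. If $N$ is nontrivial it need not be the case that $N'$ is nontrivial — $N$ could be infinite cyclic. I would therefore split: if $N$ is abelian then $N'=\{1\}$ and $F/N'=F/\{1\}=F$ only when $N=\{1\}$, contradiction; if $N\ne\{1\}$ is abelian then $N\cong\MZ$ is central-free \dots{} actually a nontrivial abelian normal subgroup of a free group forces $F$ itself to be abelian (cyclic), contradicting $\mathrm{rk}(F)\ge 2$. Hence $N$ is non-abelian, so $N$ has rank $\ge 2$, $N' \ne \{1\}$, and by induction $N^{(k)}\ne\{1\}$ and has rank $\ge 2$ for every $k$ (the derived subgroup of a non-abelian free group is itself non-abelian free of infinite rank). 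This settles (ii); and (iii) follows since $N^{(k)}\le N'$ and $F/N'$ is infinite (it surjects onto the infinite group $F/N\times N/N'$-ish — more simply, $N/N'$ is infinite because $N$ is free non-abelian, so $[F:N']=\infty$, hence $[F:N^{(k)}]=\infty$). For (i): $\WP(F/M)=\WP(F/N^{(d-1)})\in\P$ follows by iterating Theorem~\ref{th:WP_WP} (or Proposition~\ref{pr:WP_WP}) $d-1$ times starting from the hypothesis $\WP(F/N)\in\P$.

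Finally, $\SSP(F/N^{(d)})\in\NP$ for each fixed $d$ because $\WP(F/N^{(d)})\in\P$ (again by iterating Theorem~\ref{th:WP_WP}), so a candidate choice of $\varepsilon_i\in\{0,1\}$ can be verified in polynomial time. Combined with the $\NP$-hardness from Theorem~\ref{th:SSP_NPcomp}, this gives $\NP$-completeness. The main obstacle — really the only non-bookkeeping point — is the verification that the structural hypotheses ``$M\ne\{1\}$'' and ``$[F:M]=\infty$'' persist along the derived series; this rests on the classical fact that the derived subgroup of a non-abelian free group is a non-abelian free group of infinite rank, which I would cite or prove in one line via the Nielsen--Schreier theorem (a nontrivial abelian normal subgroup of a free group of rank $\ge 2$ is impossible, so $N'\ne\{1\}$, and then $N'$ is free of infinite rank, in particular non-abelian, and the induction runs). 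Everything else is a direct appeal to Theorems~\ref{th:WP_WP} and \ref{th:SSP_NPcomp}.
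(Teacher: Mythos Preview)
Your proposal is correct and is exactly the argument the paper leaves implicit (the corollary is stated with a bare \qed): write $F/N^{(d)} = F/(N^{(d-1)})'$ and apply Theorem~\ref{th:SSP_NPcomp} with $N$ replaced by $N^{(d-1)}$, using Theorem~\ref{th:WP_WP} for the word-problem hypothesis and the Nielsen--Schreier fact that a nontrivial normal subgroup of a free group of rank $\ge 2$ is non-abelian free (hence all $N^{(k)}$ remain nontrivial and of infinite index) for the other two hypotheses. Two small cleanups: Theorem~\ref{th:WP_WP} already gives $\WP(F/N^{(d-1)})\in\P$ in one shot rather than by iteration, and Theorem~\ref{th:SSP_NPcomp} already asserts full $\NP$-completeness, so your separate verification of membership in $\NP$ is unnecessary.
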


This gives an example of a class of groups with $\NP$-hard subset sum problem which Gromov-Hausdorff limit
has subset sum problem in $\P$.

\end{document}